\makeatletter\@addtoreset{equation}{section}\makeatother
\newcommand{\rsnote}[1]{{\color{cyan}{[rs: #1]}}}
\newtheorem{theorem}{Theorem}[section]
\newtheorem{remark}{Remark}[section]
\newtheorem{lemma}{Lemma}[section]
\newtheorem{proposition}{Proposition}[section]
\newtheorem{corollary}{Corollary}[section]
\newtheorem{property}{Property}[section]
\newcommand{\cT}{\mathcal{T}}
\newcommand{\cK}{\mathcal{K}}
\newcommand{\nrm}{| \hspace{-.6mm} | \hspace{-.6mm} |}
\newcommand{\cD}{\mathcal{D}}
\newcommand{\cP}{\mathcal{P}}
\newcommand{\cM}{\mathcal{M}}
\newcommand{\cO}{\mathcal{O}}
\newcommand{\eps}{\varepsilon}
\newcommand{\HPAFEM}{$\mathbf{hp}$-{\bf AFEM}\xspace}
\newcommand{\Solve}{{\bf SOLVE}}
\newcommand{\Estimate}{{\bf ESTIMATE}\xspace}
\newcommand{\Mark}{{\bf  MARK}\xspace}
\newcommand{\Refine}{{\bf  REFINE}\xspace}
\newcommand{\Hpnearbest}{$\mathbf{hp}$-{\bf NEARBEST}\xspace}
\newcommand{\Pde}{{\bf PDE}\xspace}
\newcommand{\Reduce}{{\bf  REDUCE}\xspace}
\newcommand{\Reproj}{{\bf\tt  REPROJECT}\xspace}
\newcommand{\Hpafem}{$\mathbf{hp}$-{\bf AFEM}\xspace}
\newcommand{\bb}{b}
\newcommand{\BB}{B}
\newcommand{\n}{\nu}
\DeclareMathOperator*{\argmin}{\rm{argmin}}
\newcommand{\osc}{{\rm{osc}}}
\newcommand{\R}{\mathbb R}
\newcommand{\cE}{{\mathcal E}}
\newcommand{\cN}{{\mathcal N}}
\newcommand{\cC}{{\mathcal C}}
\newcommand{\cL}{{\mathcal L}}
\DeclareMathOperator{\diam}{diam}
\DeclareMathOperator{\est}{\mathcal{E}}
\DeclareMathOperator{\errtotal}{\mathfrak{E}}
\DeclareMathOperator{\err}{E}
\newcommand{\tvert}{\vert\!\vert\!\vert}
\def \cT {{\mathcal{T}}}
\def \cM {{\mathcal{M}}}
\def \HOm {{H^1_0(\Omega)}}
\def \HI {{H^1_0(K_D)}}
\def \HmOm {{H^{-1}(\Omega)}}
\newenvironment{algotab}%
{\par\begin{samepage}%
\begin{tabbing}\ttfamily%
 \hspace*{5mm}\=\hspace{3ex}\=\hspace{3ex}\=\hspace{3ex}\=\hspace{3ex}%
\=\hspace{3ex}\=\hspace{3ex}\=\hspace{3ex}\=\hspace{3ex}\kill}%
{\end{tabbing}\end{samepage}}
\newcommand{\wt}{\widetilde}
\newcommand{\wh}{\widehat}
\title{{\bf Convergence and Optimality of \HPAFEM}}
\author{
Claudio~Canuto\thanks{Dipartimento di Scienze Matematiche,
Politecnico di Torino,
Corso Duca degli Abruzzi 24,
I-10129 Torino, Italy (claudio.canuto@polito.it )} 
\and
Ricardo~H.~Nochetto%
\thanks{Department of Mathematics and Institute for Physical Science
and Technology, University of Maryland, College Park, MD, USA
(rhn@math.umd.edu)}
\and Rob~Stevenson%
\thanks{Korteweg-de Vries Institute for Mathematics,
University of Amsterdam,
P.O. Box 94248,
1090 GE Amsterdam, The Netherlands
(r.p.stevenson@uva.nl)}
\and Marco~Verani\thanks{MOX-Dipartimento di Matematica, Politecnico di Milano, P.zza Leonardo Da Vinci 32, I-20133 Milano, Italy (marco.verani@polimi.it).}}
\begin{document}
\maketitle

\abstract{
We design and analyze an adaptive $hp$-finite element method
(\HPAFEM) in dimensions $n=1,2$.
The algorithm consists of iterating two routines: 
\Hpnearbest finds a near-best $hp$-approximation of the current
discrete solution and data to a desired accuracy, and
\Reduce improves the discrete solution to a finer but comparable accuracy.
The former hinges on a recent algorithm by  Binev for adaptive
$hp$-approximation, and acts as a coarsening step. We prove
convergence and instance optimality. 
}

\section{Introduction}

The discovery that elliptic problems with localized singularities,
such as corner singularities, can be approximated with exponential
accuracy propelled the study and use of $hp$-FEMs, starting with the
seminal work of Babu\v ska.
The a priori error analysis originated in the late 70's with the earliest 
attempts to study the adaptive approximation of a univariate function, 
having a finite number of singularities and otherwise smooth, 
by means of piecewise polynomials of variable degree \cite{DaSc:79,DVSc:80}.
These results influenced Gui and Babu\v ska
in their pioneering study of the convergence rate of the
$hp$-approximation to a one dimensional model elliptic problem in
\cite{GuiBa:86b}  and in their subsequent work \cite{GuiBa:86c}, which
proves convergence of an adaptive $hp$-algorithm with a predicted rate. However, due to the assumptions on the admissible error estimators, which appear to be overly restrictive, the results in \cite{GuiBa:86c} cannot be considered completely satisfactory. Starting from the late 80's the study of a posteriori error estimators and the design of adaptive $hp$-algorithms has been the subject of an intense research. We refer to the book \cite{SCHW98}, and the survey paper \cite{CaVe:13}, as well as the references therein for more details.

However, despite the interest in $hp$-FEMs, the study of adaptivity 
is much less developed than for the $h$-version of the FEM, for which a rather
complete theory has been developed in the last decade
\cite{Doerfler:96,MNS:00,BDD04,CKNS:08,Stevenson:07}; we
refer to the survey \cite{NoSiVe:09}. Regarding the $hp$-FEM,
we mention \cite{ScSi:00,DoHe:07,BuDo:11,BaPaSa:14} which prove
convergence without rates.
The purpose of this paper is to bridge this gap: we present a new 
\HPAFEM, which hinges on a recent algorithm by Binev for adaptive
$hp$-approximation \cite{Bi:13,Bi:14}, and prove several properties
including instance optimality in dimensions $n=1,2$. The theory is
complete for $n=1$ but there are a couple of pending issues for $n=2$,
which we discuss below.

The success of \HPAFEM 's hinges on having solutions and data
  with suitable sparsity structure, as well as practical algorithms
  that discover such a structure via computation. This is why
  existing \HPAFEM software typically
  probes the current discrete
  solution to learn about the local smoothness
of the exact solution, 
  but can only search around the current level of
  resolution. We refer, in particular, to the algorithms presented
  in \cite{Mav:94,AinsworthSenior:1998,Houston-Senior-Suli:2002,HoustonSuli:2005,MelenkEibner:2007} for strategies based on
  analyticity checks or local regularity estimation (see also
  \cite{ScSi:00,DoHe:07}), to the algorithms in
  \cite{DemkoviczOden-III:1989,DemkoviczOden-II:1989,DemkoviczOden-I:1989,DemkowiczRachowicz:2002} and \cite{TexasThreeSteps} for strategies
  based on the use of suitable discrete reference solutions, and to the algorithm in \cite{MW01} for a strategy based on comparing estimated and predicted errors.

\subsection{Challenges of $hp$-Approximation}

To shed light on the difficulties to design \HPAFEM,
we start with the much simpler problem of {\it $hp$-approximation} for
$n=1$. Let $\Omega := (0,1)$ and $K$ be a dyadic interval obtained
from $K_0=\bar\Omega$. Let $p$ be the polynomial degree associated with
$K$ at a certain stage of the adaptive algorithm, and denote
$D=(K,p)$. Given $v\in L^2(\Omega)$ and $p\ge0$, let
\begin{equation}\label{}
e_D(v):= \min_{\varphi \in \mathbb{P}_p(K)} \Vert v-\varphi\Vert_{L^2(K)}^2 
\quad \text{and} \quad
Q_D(v):=\argmin_{\varphi \in \mathbb{P}_p(K)} \Vert v-\varphi\Vert_{L^2(K)},
\end{equation}
the latter function being extended with zero outside $K$.
The following algorithm generates a sequence of $hp$-decompositions
$({\cD}_\ell)_{\ell=0}^\infty$ and corresponding piecewise polynomial approximations 
$v_\ell=v_{\cD_\ell}$. With $v_0 := Q_{K_0,0}(v)$,  for $\ell>0$ and any $D=(K,p) \in \cD_\ell$,
\begin{enumerate}[$\bullet$]
\item 
compute $e_{K, p+1}(v-v_\ell)$ as well as $e_{K',p}(v-v_\ell)$ and 
$e_{K'',p}(v-v_\ell)$ for $K'$ and $K''$ being the two children of $K$;

\item if
$
e_{K, p+1} (v-v_\ell) < e_{K',p}(v-v_\ell) + e_{K'',p}(v-v_\ell),
$
then replace $D$ by $\wt D :=(K, p+1)$ in $\cD_{\ell+1}$ and set
$$
v_{\ell+1} := v_{\ell} +  Q_{\wt D}(v-v_\ell);
$$ 

\item otherwise, replace $D$ by $D' :=(K', p)$ and
  $D'' :=(K'', p)$ in $\cD_{\ell+1}$ and set
$$
v_{\ell+1} := v_{\ell} + Q_{D'}(v-v_\ell)
+ Q_{D''}(v-v_\ell).
$$ 

\end{enumerate}
Although this algorithm is deliberately very rudimentary so as to simplify the
discussion, it mimics existing schemes that query whether it is
more advantageous to refine the element $K$ or increase the polynomial
degree $p$ by a fixed amount, say $1$. We wonder
whether such an algorithm may lead to near-optimal $hp$-partitions.
In order to elaborate on this question, 
we now present two extreme examples that illustrate the role of sparsity
for the design of \HPAFEM.

\medskip\noindent
{\bf Example 1: Lacunary Function.}
For a given integer $L>0$, let $v$ be a polynomial of degree 
$p := 2^L-1$, such that, on each dyadic interval $K$ of
generation $0\le\ell<L$, $v$ is $L^2$-orthogonal to the linear
polynomials with vanishing mean.
Since we need to impose $2^{\ell}$ orthogonality relations for each level $\ell$,
we get altogether $1 + 2 + 2^2 + \cdots + 2^{L-1} = 2^L - 1$ constraints.
We thus realize that a nontrivial polynomial of degree $p$ does exist 
because it has $2^L$ parameters. We also see that  the
algorithm above bisects all dyadic elements $K$ starting from $K_0$
until reaching the level $L$, and that $v_\ell$ for all
$0\le\ell<L$  is the piecewise constant function that takes the mean-value of $v$ on each element in $\cD_\ell$.
Even if the algorithm stops refining at level $L$
and chooses from then on to raise the polynomial degree by 1 in each of
the $p$ elements, then at least $p$ new degrees of freedom have to be
added in each interval to represent $v$ exactly. This leads to a total of
$p^2$ degrees of freedom activated to capture a polynomial of degree
$p$, thereby proving that this process is non-optimal. We conclude that
to be near-optimal, \HPAFEM must be able to backtrack and review
decisions made earlier. This process, from now on called {\it coarsening},
is missing in most algorithms for $hp$-adaptivity except, for example, that of 
Demkowicz, Oden and Rachowicz \cite{DemkoviczOden-III:1989},
for which there are no optimality
results. The preceding function is extremely sparse for
$hp$-approximation, in fact a single polynomial, but its structure
is hard to discover in practice because of the sparsity gap.

\medskip\noindent
{\bf Example 2: Non-degenerate Function.} 
We now consider the canonical function 
$v(x) = x^{\alpha}$ with $\alpha<1$ on $\Omega = (0,1)$,
studied by DeVore and Scherer \cite{DVSc:80} and by Gui and Babu\v ska
\cite{GuiBa:86b}, which does not exhibit a sparsity gap. In fact,
the following non-degeneracy property is valid: there exist constants
$C_1,C_2$ such that for all intervals $K$ and polynomial degrees $p$
\[
C_2 \le \frac{e_{K,p+1}(v)}{e_{K,p}(v)} \le C_1.
\]
The exponential rate of convergence derived a priori in \cite{GuiBa:86c}, as
well as the linear increase of polynomial degrees starting from the
origin, are based on this crucial property. Similar results have
  been derived later for $n=2$ by Babu\v ska and Guo \cite{GuoBabuska-I:1986,GuoBabuska-II:1986} and for $n=3$ by Schotzau, Schwab and Wihler \cite{SchotzauSchwabWihler:2012-I,SchotzauSchwabWihler:2012-II}; see \cite{SCHW98} for a
  thorough discussion of the cases $n=1,2$. It is thus conceivable, as
observed in practice, that decisions made by \HPAFEM's with a
building block such as that above do not
produce unnecessary degrees of freedom for problems such as Example 2.
The lack of a coarsening step in most existing $hp$-software could thus be
attributed to the very special geometric features of point and edge
singularities, this being a special rather than a universal
property to design an optimal \HPAFEM.

\subsection{Our contributions}

Since we wish to account for a large class of functions
(solutions and data), perhaps exhibiting degeneracies such as in Example 1,
 our \HPAFEM includes a coarsening routine, which we envisage to be
 unavoidable for obtaining optimality. 
Our \HPAFEM hinges on two routines, \Hpnearbest and \Reduce,
and the former in turn relies on the adaptive $hp$-approximation routine by Binev
\cite{Bi:13,Bi:14}. 
To describe them, let 
$u =u(f,\lambda) \in H^1_0(\Omega)$ be the solution to a 
second order elliptic PDE on a domain $\Omega\subset\R^n, n=1, 2$,  
with data $(f,\lambda)$, where $f$ denotes forcing
term(s) and $\lambda$ parameters such as coefficients.

Given a reduction factor $\varrho \in (0,1)$, a {\it conforming} $hp$-partition
$\cD$, (discontinuous) $hp$-FEM approximations
$(f_\cD,\lambda_\cD)$ to $(f,\lambda)$ over $\cD$, 
%
%
the routine \Reduce produces a {\it conforming} $hp$-refinement $\bar{\cD}$ such that  {
the $|\cdot|_{H^1}$-error 
in the ({\it continuous}) $hp$-fem Galerkin approximation on $\bar{\cD}$ to the exact solution $u(f_\cD,\lambda_\cD)$
is less than $\rho$ times the same Galerkin error relative to the partition $\cD$.}
This routine will be implemented as an AFEM routine that applies under a no-data-oscillation assumption.
%

The routine \Hpnearbest deals with nonconforming meshes
and subordinate discontinuous functions.
Given a tolerance $\eps>0$, a generic function $v \in H^1(\Omega)$, and data
$(f,\lambda)$, \Hpnearbest produces a {\it nonconforming} $hp$-partition $\cD$ and
suitable projections $(f_\cD,\lambda_\cD)$ of the data onto discontinuous 
$hp$-FEM spaces over $\cD$. The output is such that the square root of a {specific} {\it error functional}
is less than $\eps$. 
This error functional is defined as the sum of the squared broken $|\cdot|_{H^1}$-error in the best (discontinuous) $hp$-approximation over $\cD$ to $v$ and
$\delta^{-1}$ times the {squared} $hp$-data oscillation 
{$\osc_\cD^2(f,\lambda)$} over $\cD$, 
for a sufficiently small penalty parameter $\delta>0$.
{In turn, $\osc_\cD^2(f,\lambda)$ measures the errors $f-f_\cD$ and $\lambda-\lambda_\cD$ on the 
partition $\cD$ in such squared local norms, that the following bound, expressing  
the continuous dependence on data of the underlying linear problem, holds:
\begin{equation}\label{stab}
|u - u(f_\cD,\lambda_\cD)|_{H^1(\Omega)} \lesssim \osc_\cD(f,\lambda).
\end{equation}
}
The procedure  \Hpnearbest  is based on
Binev's algorithm and is {\it instance optimal} for this
functional.

Our algorithm \HPAFEM consists of a repetition of calls of \Hpnearbest and \Reduce
with decreasing error tolerances.
The calls of \Hpnearbest, with $v$ being the current approximation to the  solution $u$,
are made to guarantee {\it instance optimality} of the coarsened approximations.
Coarsening, however, increases the error by a constant
factor. This must be compensated by a judicious choice of
the reduction factor $\varrho$ of \Reduce so that the concatenation 
of the two routines produces a converging sequence. To realize this
idea we must account for the following additional issues.

\medskip\noindent
{\bf Making meshes $hp$-conforming}:
After a call of \Hpnearbest, the generally
nonconforming $hp$-partition $\cD$ has to be refined to a conforming
one $\cC(\cD)$ so that it can serve as input for \Reduce. This is
obviously an issue for dimension $n=2$ but not for $n=1$, in which case
one can take $\cC(\cD)=\cD$. One may wonder whether the cardinality of
$\cC(\cD)$ can be bounded uniformly by that of $\cD$ for $n=2$. To see
that the answer is negative in general consider the following
pathological situation: a large triangle of $\cD$ with 
high polynomial degree is surrounded by small triangles with
polynomial degree 1. 
This is the reason why, without further assumptions on the structure
of the solution $u$, we cannot guarantee for $n=2$ an optimal balance
between the accuracy of the $hp$-approximations and the cardinality of the
$hp$-partitions at stages intermediate to consecutive calls of \Hpnearbest.
Resorting to a discontinuous \HPAFEM would cure
this gap at the expense of creating other difficulties.

\medskip\noindent
{\bf Making functions continuous}:
In order to quantify the reduction factor $\varrho$ of \Reduce we must be
able to compare the (broken) $H^1(\Omega)$-errors of the best {\it continuous}
and {\it discontinuous} $hp$-FEM approximations over $\cC(\cD)$.
We show that the former is bounded by the latter with 
a multiplicative constant which depends
logarithmically on the maximal
polynomial degree for $n=2$. This extends upon a recent result of Veeser for
the $h$-version of the FEM \cite{Vee12}. Such constant does not depend
on the polynomial degree for $n=1$. 
This construction is needed for the analysis of \HPAFEM only
but not its implementation.

\medskip\noindent
{\bf Dealing with a perturbed problem}:
When, preceding to a call of \Hpnearbest, the current (continuous) $hp$-approximation to $u$  has a tolerance $\eps$,
\Hpnearbest will be called with a tolerance $\eqsim \eps$ in order to guarantee optimality of the coarsened discontinuous $hp$-approximation.
In addition, \Hpnearbest produces new approximations $(f_\cD,\lambda_\cD)$ to the data to be used in the subsequent call of \Reduce. {The prescribed tolerance  ensures, in view of the definition of the error functional, that  $\osc_\cD(f,\lambda) \lesssim \sqrt{\delta}\, \eps$. Hence, concatenating with (\ref{stab}), we are guaranteed} that  
$|u-u(f_\cD,\lambda_\cD)|_{H^1(\Omega)} \lesssim \sqrt{\delta}\, \eps$.
The routine \Reduce approximates the solution $u(f_\cD,\lambda_\cD)$,
and so cannot be expected to produce an approximation to $u$ that is
more accurate than $u(f_\cD,\lambda_\cD)$. Therefore, in order to
obtain convergence of the overall iteration, 
the condition $|u-u(f_\cD,\lambda_\cD)|_{H^1(\Omega)} \leq \xi \eps$ 
is needed for some parameter $\xi \in [0,1)$, which is achieved by selecting the penalty parameter $\delta$ to be sufficiently small.

\medskip
The routine \Reduce will be implemented as an AFEM consisting of the usual loop over \Solve, \Estimate, \Mark, and \Refine.
For $n=1$, we construct an estimator that is reliable and discretely
efficient, uniformly in $p$.
Consequently, the number of iterations to achieve some fixed error
reduction $\varrho$ is independent on the maximal polynomial degree.

For $n=2$, we employ the residual-based a posteriori error estimator analyzed by
Melenk and Wohlmuth \cite{MW01}, which turns out to be $p$-sensitive. 
We show that in order to achieve a fixed error reduction,
it suffices to grow the number of iterations more than
quadratically with respect to the maximal polynomial degree.
This sub-optimal result is yet another reason for optimality
degradation at stages intermediate between two consecutive calls of \Hpnearbest.
Nevertheless, our result improves upon a recent one by
Bank, Parsania and Sauter \cite{BaPaSa:14}, which requires 
the number of iterations to be proportional to the fifth power of the
maximal polynomial degree.

Throughout this work, we assume that the arising linear systems are solved exactly.
To control the computational cost, optimal iterative solvers, uniformly in the polynomial degree would be required. We refer to \cite{BCCD:2014} for an example.

This work is organized as follows.
We present \HPAFEM within an abstract setting
in Sect. \ref{S1} and prove that it converges, and that the sequence of outputs of \Hpnearbest is {\it instance optimal}.
We give a brief description of Binev's algorithm in Sect.
\ref{sec:Binev}.
In Sect. \ref{S:1d}, we apply the abstract setting to the general
$1$-dimensional elliptic problem.
Finally, in Sect. \ref{S3}
we apply the abstract theory to the Poisson
equation in two dimensions.

The following notation will be used thoughout the paper.
By $\gamma \lesssim \delta$ we will mean that $\gamma$ can be bounded
by a multiple of $\delta$, independently of parameters which $\gamma$
and $\delta$ may depend on. Likewise, $\gamma \gtrsim \delta$ is
defined as $\delta \lesssim \gamma$, and $\gamma\eqsim \delta$ as
$\gamma\lesssim \delta$ and $\gamma \gtrsim \delta$.

\section{An abstract framework} \label{S1}

We now present the \HPAFEM in two steps. We first deal with an ideal
algorithm and later introduce a practical scheme including \Reduce. We
also discuss a possible realization of \Reduce.

\subsection{Definitions and assumptions} \label{def_ass}
On a domain $\Omega \subset \R^n$, we consider a, possibly, parametric PDE
\begin{equation}\label{eq:main-problem}
A_\lambda u=f.
\end{equation}
Here the forcing $f$ and the parameter $\lambda$ (representing,
  e.g., the coefficients of the operator) are taken from some spaces
$F$ and $\bar{\Lambda}$ of functions on $\Omega$, such that there
exists a unique solution $u=u(f,\lambda)$ living in a space $V$ of
functions on $\Omega$. We assume, for simplicity, that $V$ 
and $F$ are Hilbert spaces over $\R$.

We assume that we are given an essentially disjoint initial partition of $\bar{\Omega}$ into finitely many (closed) subdomains (the `element domains'). We assume that for each element domain $K$ that we encounter, there exists a unique way in which it can be split into element domains $K'$ and $K''$, the `children' of $K$, such that $K=K' \cup K''$ and $|K' \cap K''|=0$.
The set $\mathfrak{K}$ 
of all these element domains is therefore naturally organized as an
infinite binary `master tree', having as its roots the element domains
of the initial partition of $\bar{\Omega}$.
A finite subset of $\mathfrak{K}$ 
is called a subtree of the master tree when it contains all roots {and
for each element domain in the subset both its parent and its sibling are in the subset.}
The leaves 
of a subtree 
form an essentially disjoint partition of $\bar{\Omega}$. The set of
all such `$h$-partitions' will be denoted as $\mathbb{K}$.
For ${\cK}, \wt{\cK} \in \mathbb{K}$, we call $\wt{\cK}$ a refinement
of ${\cK}$, and denoted as $\cK \leq \wt\cK$, when any $K \in
\wt{\cK}$ is either in ${\cK}$ or has an ancestor in ${\cK}$.

Our aim is to compute `$hp$-finite element' approximations to $u$, i.e., piecewise polynomial approximations, with variable degrees, w.r.t. partitions from $\mathbb{K}$. In order to do so, it will be needed first to replace the data $(f,\lambda)$ by approximations from finite dimensional spaces. For that goal as well, we will employ spaces of piecewise polynomials, with variable degrees, w.r.t. partitions from $\mathbb{K}$, as will be described next.

For all $K \in \mathfrak{K}$, {let $V_K,F_K,\Lambda_K$ be
(infinite dimensional) spaces}
of functions on $K$, such that for any ${\cK} \in \mathbb{K}$, it
holds that, possibly up to isomorphisms,
$$
V \subseteq \prod_{K \in {\cK}} V_K,
\quad 
F = \prod_{K \in {\cK}} F_K,
\quad 
\Lambda \subseteq  \prod_{K \in {\cK}} \Lambda_K \subseteq \bar{\Lambda}.
$$
Here $\Lambda$ is a subset of $\bar{\Lambda}$, which contains all the
parameters that will be allowed in our adaptive algorithm \HPAFEM, and, for simplicity,  has a Hilbert topology.
For all $(K,d) \in \mathfrak{K}\times \mathbb{N}$ 
(hereafter $\mathbb{N}$ stands for the set of all {\it positive} natural numbers)
and $Z \in \{V,F,\Lambda\}$, we assume finite dimensional spaces $Z_{K,d} \subset Z_K$ of functions on $K$ such that $Z_{K,d} \subseteq Z_{K,d+1}$, 
$Z_{K,d} \subset Z_{K',d} \times Z_{K'',d}$,
and $Z \cap \bigcup_{\cK \in \mathbb{K}, d \in \mathbb{N}}
  \prod_{K\in\mathcal{K}} Z_{K,d}$ is dense in $Z$.
 
In applications, $V_{K,d}$ will be a space of polynomials of dimension $\eqsim d$.
 For instance, when $K$ is an $n$-simplex, $V_{K,d}$ may be chosen as ${\mathbb P}_{p}(K)$, where the associated polynomial degree $p=p(d)$ can be defined as the largest value in $\mathbb{N}$ such that 
 $\text{dim} \, {\mathbb P}_{p-1}(K)={n+p-1 \choose p-1} \leq d$. 
 This definition normalizes the starting value $p(1)=1$ for all $n \in \mathbb{N}$.
 Only for $n=1$, it holds that $p(d)=d$ for all $n \in \mathbb{N}$.
 
 Analogously, the spaces $F_{K,d}$ and $\Lambda_{K,d}$ will be selected as (Cartesian products of) spaces of polynomials as well, of degrees 
 equal to $p$ plus some constant in $\mathbb{Z}$.
 
In the following, $D \in \mathfrak{K} \times \mathbb{N}$ will denote
an {{\it $hp$-element}: it is a pair $(K_D,d_D)$ consisting of an
  element domain $K_D \in \mathfrak{K}$, and an integer} 
$d_D \in \mathbb{N}$. We will write $Z_D = Z_{K_D,d_D}$.
 
 For all $D \in \mathfrak{K} \times \mathbb{N}$, we assume a projector $Q_{D}: V \times F \times \Lambda \rightarrow V_{D} \times F_{D} \times \Lambda_{D}$, and a local  error functional
$e_{D}=e_{D}(v,f,\lambda) \geq 0$, that, for  $(v,f,\lambda) \in V
  \times F \times \Lambda$, gives a measure for the (squared) distance
  between $(v|_{K_D},f|_{K_D},\lambda|_{K_D})$ and its local
  approximation $(v_{D},f_{D},\lambda_{D}):=Q_{D}(v,f,\lambda)$.
 We assume that this error functional is non-increasing under both `$h$-refinements' and `$p$-enrichments', in the sense that
\begin{equation} \label{1}
\begin{split}
e_{D'}+e_{D''} & \leq e_D \text{ when } K_{D'},\, K_{D''} \text{ are the children of } K_D, \text{ and } d_{D'}=d_{D''}=d_D;\\
e_{D'} & \leq e_D \text{ when } K_{D'}= K_{D} \text{ and } d_{D'} \geq d_D.
\end{split}
\end{equation}

 A collection $\cD=\{D=(K_D,d_D)\}$ of $hp$-elements is called an 
{\it $hp$-partition} provided $\cK(\cD):=\{K_D: D \in \cD\} \in
  \mathbb{K}$. The collection of all $hp$-partitions is denoted as $\mathbb{D}$.
For $\cD \in \mathbb{D}$,
we set the $hp$-approximation spaces
$$
Z_\cD:=\prod_{D \in \cD} Z_D, \quad (Z \in \{V,F,\Lambda\}),
$$
and define
$$
\# \cD:=\sum_{D \in \cD} d_D.
$$

In our applications, the quantity $\# \cD$ is proportional to the dimension of
$Z_\cD$, and $e_D(v,f_D,\lambda_D)$ is the sum of the squared best approximation error of $v|_{K_D}$ from $V_D$ in $|\cdot|_{H^1(K_D)}$ and $\delta^{-1} $ times {the square of} the local data oscillation.

For $\cD \in \mathbb{D}$, we set the {\it global error functional}
$$
{\err_\cD(v,f,\lambda):=\sum_{D \in \cD}  e_D(v,f,\lambda),}
$$
which is a measure for the (squared) distance between $(v,f,\lambda)$ and its projection
\begin{equation} \label{81}
\Big({\prod_{D \in \cD}v_D},\prod_{D \in \cD}f_D,\prod_{D \in\cD}
\lambda_D \Big) \in V_\cD \times F_\cD \times \Lambda_\cD.
\end{equation}

For ${\cD}, \wt{\cD} \in \mathbb{D}$, we call $\wt{\cD}$ a refinement
of ${\cD}$, and write ${\cD} \leq \wt{\cD}$, when both 
$\cK(\cD) \leq \cK(\wt{\cD})$, and $d_{\wt{D}} \geq d_D$, for any $D \in \cD$, $\wt{D} \in \wt{\cD}$ with
$K_D$ being either equal to $K_{\wt{D}}$ or an ancestor of $K_{\wt{D}}$.
With this notation, \eqref{1} {is equivalent to} 
\begin{equation} \label{8}
\err_{\wt{\cD}}(v,f,\lambda) \leq \err_\cD(v,f,\lambda) \quad\forall \wt{\cD} \geq \cD.
\end{equation}

We will apply a finite element solver that generally operates on a subset $\mathbb{D}^c$ of the set of $hp$-partitions $\mathbb{D}$, typically involving a restriction to those $\cD \in \mathbb{D}$ for which the `$h$-partition' $\cK(\cD)$ is `conforming'.
We assume that there exists a mapping $\cC: \mathbb{D} \rightarrow \mathbb{D}^c$
such that
\begin{equation} \label{eq:c}
  \cC(\cD) \geq \cD \quad \forall \cD \in \mathbb{D}.
  \end{equation}
  
We emphasize that even for $\cD \in \mathbb{D}^c$, generally the space $V_\cD$ is not a subspace of $V$. Conforming subspaces, used e.g. in Galerkin approximations, are defined as 
\begin{equation} \label{100}
V_{\cD}^c:=V_{\cD} \cap V.
\end{equation}

With regard to \eqref{81}, we introduce the notation
\[
 f_\cD := \prod_{D \in \cD}f_D,
\qquad
\lambda_{\cD} := \prod_{D \in \cD}\lambda_D,
\]
but reserve the symbol $v_\cD$ to denote later a suitable near-best
approximation to $v\in V$ from $V_\cD^c$.


\subsection{A {basic} $hp$-adaptive finite element method} \label{S:2.2}

 Our aim is for given $(f,\lambda) \in F \times \Lambda$ and $\eps>0$,
 to find $\cD$ with an essentially minimal $\# \cD$
such that {$\err_\cD(u(f,\lambda),f,\lambda) \leq \eps$.}
 We will achieve this by alternately improving either the efficiency or the accuracy of the approximation. 
 To that end, {we begin by considering a {\it basic} algorithm, which highlights the essential ingredients of a $hp$-adaptive procedure.} We make use of the two routines described below.
 The first routine is available and will be discussed in
 Sect.~\ref{sec:Binev}. 
Since we are not concerned with complexity now, existence of the second routine is a simple consequence of the density of the union of the $hp$-approximation spaces in $V$.
 
\begin{itemize}
\item $[{\cD},f_{\cD},\lambda_{\cD}]:=\ $\Hpnearbest$\!\!(\eps,v,f,\lambda)$\\[5pt]
The routine \Hpnearbest takes as input $\eps>0$, and $(v,f,\lambda) \in V\times F \times \Lambda$, and outputs $\cD \in \mathbb{D}$ as  well as $(f_\cD,\lambda_\cD)$ such that {$ \err_\cD(v,f,\lambda)^{\frac{1}{2}} \leq \eps$} and, for some constants $0< \bb
 \leq 1 \leq \BB$, $\# \cD \leq \BB \# \wh\cD$ for any $\wh\cD \in \mathbb{D}$ with 
{$\err_{\wh{\cD}}(v,f,\lambda)^{\frac{1}{2}} \leq \bb\eps$.} 

\item $[\bar{\cD},\bar{u}]:=\Pde(\eps,\cD,f_{\cD},\lambda_{\cD})$\\[5pt]
The routine \Pde takes as input $\eps>0$, $\cD \in \mathbb{D}^c$, and data $(f_\cD,\lambda_\cD) \in F_\cD \times \Lambda_\cD$.
It outputs $\bar{\cD} \in \mathbb{D}^c$ with $\cD \leq \bar{\cD}$
and $\bar{u} \in V_{\bar{\cD}}^c$
such that $\|u(f_\cD,\lambda_\cD)- \bar{u}\|_V \leq \eps$.
\end{itemize}
The input argument $v$ of \Hpnearbest will be the current approximation to $u(f,\lambda)$. In an `$h$-adaptive' setting, usually the application of such a routine is referred to as `coarsening'.
Since the data $(f_\cD,\lambda_\cD) \in F_\cD \times \Lambda_\cD$
of \Pde is discrete, it will be said to satisfy a {\it no-data-oscillation}
assumption w.r.t. $\cD$.

We make the following abstract assumptions concerning the relation between the error functional, the norm on $V$, the mapping $(f,\lambda) \mapsto u(f,\lambda)$, and the constant $\bb$ of \Hpnearbest.
We assume the existence of constants $C_1, C_2>0$ with
\begin{equation}\label{0}
C_1 C_2 <\bb,
\end{equation} 
such that 
%
\begin{gather} \label{2}
\|u(f,\lambda)-u(f_{\cD}, \lambda_{\cD})\|_V  \leq C_1 \inf_{w \in V} \err_\cD(w,f,\lambda)^{\frac{1}{2}} \qquad \forall \cD \in \mathbb{D},\,\forall(f,\lambda) \in F \times \Lambda,
\\ \label{3}
\sup_{(f,\lambda) \in F \times \Lambda} |\err_\cD(w,f,\lambda)^{\frac{1}{2}}-\err_\cD(v,f,\lambda)^{\frac{1}{2}}|  \leq C_2  \|w-v\|_V \qquad \forall\cD \in \mathbb{D},\, \forall v,w \in V.
\end{gather}
%
%
%

The condition \eqref{3} means that $\err_\cD(w,f,\lambda)^{\frac{1}{2}}$ is Lipschitz w.r.t. its first argument. In our applications, we will verify this condition with $C_2=1$.
The condition \eqref{2} will be a consequence of the continuous
dependence \eqref{stab} of the solution on the data, and the fact 
 that the error functional will contain {the square of} a data oscillation 
$\osc_\cD(f,\lambda)$. Since this term is penalized by a factor $\delta^{-1}$, we will be able to ensure  \eqref{2} with $C_1 \eqsim \sqrt{\delta}$ which yields \eqref{0} by taking $\delta$ sufficiently small.
  
 \medskip 
Our {basic}  $hp$-adaptive finite element routine reads as follows. 
\begin{algotab}
\> \HPAFEM $\!\!(\bar{u}_0,f,\lambda,\eps_0)$ \\
\>\> \% Input: $(\bar{u}_0,f,\lambda) \in V \times F \times \Lambda$, {$\eps_0>0$} 
with $\|u(f,\lambda)-\bar{u}_0\|_V \leq \eps_0$.\\
\>\> \% Parameters: $\mu,\omega>0$ such that 
$C_1 C_2<\bb(1-\mu)$, $\omega \in (\frac{C_2}{\bb},\frac{1-\mu}{C_1})$, $\mu\in(0,1)$.\\

\>\> {\tt for} $i=1,2,\ldots$ {\tt do}\\
\>\>\> $[{\cD}_i,f_{\cD_i},\lambda_{\cD_i}]:=$\Hpnearbest$\!\!(\omega \eps_{i-1},\bar{u}_{i-1},f,\lambda)$ \\
\>\>\> $[\bar{\cD}_i,\bar{u}_i]:=\Pde(\mu\eps_{i-1},\cC(\cD_i),f_{\cD_i},\lambda_{\cD_i})$ \\
\>\>\> $\eps_i:=(\mu+C_1 \omega)\eps_{i-1}$ \\
\>\> {\tt end do}
\end{algotab}

\noindent Note that $\bb \omega-C_2>0$, and that
$\eps_i= (\mu+C_1 \omega)^i \eps_0$, where $\mu+C_1 \omega<1$.

\begin{theorem} \label{th1}  Assuming \eqref{0}-\eqref{3}, for the sequences $(\bar{u}_i)$, $(\cD_i)$ produced in \HPAFEM, writing $u=u(f,\lambda)$, it holds that
\begin{equation}\label{th2.1:1}
\|u-\bar{u}_i\|_V \leq \eps_i \quad  \forall i \geq 0,\qquad 
{\err_{\cD_i}(u,f,\lambda)^{\frac{1}{2}}  \leq (\omega +C_2) \eps_{i-1}} \quad \forall i \geq 1,
\end{equation}
and
\begin{equation}\label{th2.1:2}
\# \cD_i \leq \BB \# \cD \quad\text{for any } \cD \in \mathbb{D} \text{ with } 
{\err_\cD(u,f,\lambda)^{\frac{1}{2}}  \leq (\bb \omega-C_2) \eps_{i-1}.}
\end{equation}
\end{theorem}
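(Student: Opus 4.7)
The plan is to establish both claims by induction on $i$, with the first following from triangle-inequality bookkeeping through the two subroutines, and the second reducing to the instance optimality of \Hpnearbest after transferring the error-functional bound from $u$ to $\bar u_{i-1}$ via the Lipschitz assumption \eqref{3}.

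For the bound $\|u-\bar u_i\|_V \leq \eps_i$, the base case is the hypothesis on $\bar u_0$. For the inductive step I will insert $u(f_{\cD_i},\lambda_{\cD_i})$ and split
\begin{equation*}
\|u - \bar u_i\|_V \leq \|u - u(f_{\cD_i},\lambda_{\cD_i})\|_V + \|u(f_{\cD_i},\lambda_{\cD_i}) - \bar u_i\|_V.
\end{equation*}
The second term is bounded by $\mu\eps_{i-1}$ by the specification of \Pde. For the first, I will apply \eqref{2} with test function $w := \bar u_{i-1}$, so that it is bounded by $C_1\,\err_{\cD_i}(\bar u_{i-1},f,\lambda)^{1/2}$; the output guarantee of \Hpnearbest called with input tolerance $\omega\eps_{i-1}$ then gives $\leq C_1\omega\eps_{i-1}$. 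Summing yields $(\mu + C_1\omega)\eps_{i-1} = \eps_i$. The second half of \eqref{th2.1:1} is then an immediate consequence of \eqref{3} applied with $v=\bar u_{i-1}$, $w=u$, combined with the just-used output bound of \Hpnearbest and the inductively established $\|u - \bar u_{i-1}\|_V \leq \eps_{i-1}$.

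For the cardinality bound \eqref{th2.1:2}, given any $\cD$ with $\err_\cD(u,f,\lambda)^{1/2} \leq (\bb\omega - C_2)\eps_{i-1}$ (the factor is positive thanks to the choice $\omega > C_2/\bb$), I will use \eqref{3} in the opposite direction to transfer the bound to $\bar u_{i-1}$:
\begin{equation*}
\err_\cD(\bar u_{i-1},f,\lambda)^{1/2} \leq \err_\cD(u,f,\lambda)^{1/2} + C_2\|u - \bar u_{i-1}\|_V \leq (\bb\omega - C_2)\eps_{i-1} + C_2\eps_{i-1} = \bb\,(\omega\eps_{i-1}).
\end{equation*}
Thus $\cD$ qualifies as an admissible competitor in the instance-optimality guarantee of \Hpnearbest$(\omega\eps_{i-1},\bar u_{i-1},f,\lambda)$, yielding $\#\cD_i \leq \BB\,\#\cD$.

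The only conceptual subtlety is that the instance optimality of \Hpnearbest is formulated relative to its argument $\bar u_{i-1}$ rather than the unknown $u$; the Lipschitz property \eqref{3} is exactly what allows us to compare these two, at the cost of an additive term $C_2\eps_{i-1}$ that must fit inside the slack budget $\bb\omega - C_2$. Everything else is parameter bookkeeping ensured by \eqref{0} together with the admissible range $\omega \in (C_2/\bb,(1-\mu)/C_1)$, which simultaneously guarantees $\bb\omega - C_2 > 0$ and the geometric contraction $\mu + C_1\omega < 1$.
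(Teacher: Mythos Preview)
Your proof is correct and follows essentially the same route as the paper's: split $\|u-\bar u_i\|_V$ through $u(f_{\cD_i},\lambda_{\cD_i})$, bound the two pieces via the \Pde specification and \eqref{2} with $w=\bar u_{i-1}$ combined with the \Hpnearbest output tolerance, then use the Lipschitz bound \eqref{3} in both directions to obtain the error-functional estimate and the cardinality bound. The only cosmetic difference is that you frame the first estimate as an induction, whereas the paper observes that the bound for $\|u-\bar u_i\|_V$ actually follows directly for each $i\ge1$ without invoking $\|u-\bar u_{i-1}\|_V\le\eps_{i-1}$; your argument does not use that hypothesis either, so the distinction is purely presentational.
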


\begin{proof} The bound $\|u-\bar{u}_0\|_V \leq \eps_0$ is valid by
  assumption. For $i\ge1$, the tolerances used for \Hpnearbest and
  \Pde, together with \eqref{2}, show that
\begin{equation}\label{1st-stat}
\begin{aligned}
\|u-\bar{u}_i\|_V & \leq \|u(f_{\cD_i},\lambda_{\cD_i})-\bar{u}_i\|_V 
+\| u-u(f_{\cD_i},\lambda_{\cD_i})\|_V \\
& \leq \mu \eps_{i-1} + C_1 \err_{\cD_i}(\bar{u}_{i-1},f,\lambda)^{\frac12}
\leq (\mu+C_1 \omega)\eps_{i-1}=\eps_i.
\end{aligned}
\end{equation}
The first statement follows for all $i\ge0$. Using this and 
\eqref{3} implies the second assertion
\[
\err_{\cD_i}(u,f,\lambda)^{\frac{1}{2}}  
\leq \err_{\cD_i}(\bar{u}_{i-1},f,\lambda)^{\frac{1}{2}} 
+C_2\|u-\bar{u}_{i-1}\|_V \leq (\omega+C_2) \eps_{i-1} 
\qquad\forall i\ge1.
\]
Let $\cD \in \mathbb{D}$ with $\err_\cD(u,f,\lambda)^{\frac{1}{2}}  \leq (\bb \omega-C_2) \eps_{i-1}$.
Then, again by \eqref{3}, $\err_{\cD}(\bar{u}_{i-1},f,\lambda)^{\frac{1}{2}}
\leq \bb \omega \eps_{i-1}$
and so $\# \cD_i \leq \BB \# \cD$ because of the optimality
  property of \Hpnearbest.
  \end{proof}

The main result of Theorem~\ref{th1} can be summarized by saying that \HPAFEM is 
{{\it instance optimal}} for reducing $\err_\cD(u(f,\lambda),f,\lambda)$ over 
$\cD \in \mathbb{D}$.
Recall that in our applications, $\err_\cD(u(f,\lambda),f,\lambda)$ will be the sum of the squared best approximation error in $u$ from the nonconforming space $V_\cD=\prod_{D \in \cD} V_D$ in the broken $H^1$-norm and a {squared} data oscillation term penalized with a factor $\delta^{-1}$.

Additionally, Theorem~\ref{th1} shows linear convergence to $u$ of the sequence $(\bar{u}_i)$ of conforming approximations, in particular $\bar{u}_i \in V_{\bar{\cD}_i}^c$ where $\cD^c \ni \bar{\cD}_i \geq \cC(\cD_i)$.
Since $\eps_i= (\mu+C_1 \omega)^i \eps_0$, the infinite loop in \HPAFEM can be stopped to meet any desired tolerance.

The preceding algorithm \HPAFEM has the minimal
structure for convergence and optimality. Since the routine \Pde neither
exploits the current iterate nor work already done, we present a
practical \HPAFEM in Sect. \ref{cost} which replaces \Pde by \Reduce.

Finally in this subsection, we comment on the implications of the instance optimality result concerning class optimality.
For $N \in \mathbb{N}$, let
$\cD_N:=\argmin\{\err_{\cD}(u,f,\lambda)^{\frac{1}{2}}\colon \cD \in
\mathbb{D},\#\cD \leq N\}$ and let the best approximation error be
\[
\sigma_N:=\err_{\cD_N}(u,f,\lambda)^{\frac{1}{2}}.
\]
\begin{remark}[algebraic decay]\label{remark10}
\rm
If $\sigma_N$ decays {\it algebraically} with $N$, namely
$\sup_N N^s\sigma_N < \infty$,
then for the sequence $(\cD_i)$ produced in \HPAFEM, one infers that
$\err_{\cD_i}(u,f,\lambda)^{\frac{1}{2}}$ decays algebraically
with $\# \cD_i$ with the optimal rate:
$\sup_i (\#\cD_i)^{s} \err_{\cD_i}(u,f,\lambda)^{\frac{1}{2}} < \infty$. 
In other words, instance optimality implies algebraic class optimality.
\end{remark}

\begin{remark}[exponential decay]\label{R:expo-decay}
\rm
For $hp$-approximation, it is more relevant to consider an {\it
exponential} decay of $\sigma_N$, i.e., 
$\sup_N e^{\eta N^\tau} \sigma_N <\infty$ for some $\eta, \tau>0$. 
This is precisely the situation considered in \cite{CNV13,CNV14,CSV14} for adaptive Fourier or Legendre methods.

Let us asssume, for convenience, that $\sigma_N = C_{\#} e^{-\eta N^\tau}$
for some constant $C_{\#}$ and ignore in subsequent calculations that 
$N$ has to be an integer. In view of Theorem~\ref{th1}, let $N$ and $\eps_{i-1}$ be so related that 
$\sigma_N = (b \omega-C_2) \eps_{i-1}$
Since apparently $\# \cD_i \leq BN$ and $\err_{\cD_i}(u,f,\lambda)^{\frac{1}{2}}
\leq (\omega +C_2) \eps_{i-1}$, we deduce 
\[
\sup_i \Big(e^{\tilde{\eta} (\#\cD_i)^\tau} 
\err_{\cD_i}(u,f,\lambda)^{\frac{1}{2}}\Big) \leq \frac{C_{\#}(\omega+C_2)}{b \omega -C_2},
\]
with $\tilde{\eta} := B^{-\tau} \eta$.

On the other hand, we will see 
in Corollary~\ref{corol20} that the routine \Hpnearbest satisfies its
optimality conditions for {\it any} $\BB>1$, at the expense of
$\bb=\bb(\BB)\downarrow 0$ when $\BB \downarrow 1$. Moreover, as we have seen, in our applications we will be able to satisfy \eqref{0}--\eqref{3} for any $\bb>0$ by taking the penalization parameter $\delta$ small enough.
Therefore, we conclude that if $\sigma_N$ decays exponentially, 
characterised by parameters $(\eta,\tau)$, then so do the errors 
produced by \HPAFEM for parameters 
$(\tilde{\eta},\tau)$, where $\tilde{\eta}=B^{-\tau} \eta$ can be chosen arbitrarily 
close to $\eta$ (at the expense of increasing the supremum
value). This situation is much better than that encountered in 
\cite{CNV13,CNV14,CSV14}.
\end{remark}

\subsection{The practical $hp$-adaptive finite element method} \label{cost}

To render \HPAFEM more practical we replace the routine \Pde by \Reduce,
which exploits the work already carried out within \HPAFEM and reads

\begin{itemize}
\item $[\bar{\cD},\bar u]:=\Reduce(\varrho,\cD,f_\cD,\lambda_\cD)$\\[5pt]
The routine \Reduce takes as input a partition $\cD \in \mathbb{D}^c$,
data $(f_\cD,\lambda_\cD) \in F_\cD \times \Lambda_\cD$, and a desired error reduction factor $\varrho \in (0,1]$, 
and produces a conforming partition $ \bar{\cD}=\bar{\cD}(\cD,\varrho) \in \mathbb{D}^c$ with $\bar{\cD}\geq \cD$ and a function
$\bar{u} \in V_{\bar{\cD}}^c$ such that 
\begin{equation}\label{eq:contr}
\|u(f_\cD,\lambda_\cD)-\bar{u}\|_V \leq \varrho \inf_{v \in V_{\cD}^c}\|u(f_\cD,\lambda_\cD)-v\|_V .
\end{equation}
\end{itemize}

Inside the practical \HPAFEM, the routine \Reduce will be called with as input partition the result of mapping $\cC:\cD \rightarrow \cD^c$ applied to the output partition of the preceding call of \Hpnearbest. In order to bound the right-hand side of \eqref{eq:contr}, we make the following assumption:
\begin{equation} \label{4}
{
\inf_{w \in V_{\cC(\cD)}^c} \|v-w\|_V \leq C_{3,\cD} \inf_{(f,\lambda)
  \in F \times \Lambda} 
\err_{\cD}(v,f,\lambda)^{\frac{1}{2}} \qquad \forall \cD \in \mathbb{D},\, \forall v\in V.
}
\end{equation}
In our applications, the infimum on the right-hand side reads as the squared error in the broken $H^1$-norm of the best approximation to $v$ from $V_\cD=\prod_{D \in \cD} V_D$. The left-hand side reads as the squared error in $H^1_0(\Omega)$ of the best approximation to $v$ from $V_{\cC(\cD)}^c=H^1_0(\Omega) \cap \prod_{D \in \cD} V_D$.
The constant $C_{3,\cD}$ should ideally be independent of
$\cD$. We will see in Sect.~\ref{S:1d} that this is the case for our application in
dimension $n=1$. However, for $n=2$ we will show in Sect.~\ref{S3}
that $C_{3,\cD}$ depends logarithmically on the largest polynomial
degree; this extends a result by A. Veeser \cite{Vee12}.

\medskip
Our practical $hp$-adaptive finite element routine reads as follows:

\begin{algotab}
\> \HPAFEM $\!\!(\bar{u}_0,f,\lambda,\eps_0)$ \\
\>\> \% Input: $(\bar{u}_0,f,\lambda) \in V \times F \times \Lambda$, {$\eps_0>0$} 
with $\|u(f,\lambda)-\bar{u}_0\|_V \leq \eps_0$.\\
\>\> \% Parameters: $\mu,\omega>0$ such that 
$C_1 C_2<\bb(1-\mu)$, $\omega \in (\frac{C_2}{\bb},\frac{1-\mu}{C_1})$, $\mu\in(0,1)$.\\
\>\> {\tt for} $i=1,2,\ldots$ {\tt do}\\
\>\>\> $[{\cD}_i,f_{\cD_i},\lambda_{\cD_i}]:=$\Hpnearbest$\!\!(\omega \eps_{i-1},\bar{u}_{i-1},f,\lambda)$ \\
\>\>\> $[\bar{\cD}_i,\bar{u}_i]
:=\Reduce(\frac{\mu}{1+(C_1+C_{3,\cD_i})\omega},\cC(\cD_i),f_{\cD_i},\lambda_{\cD_i})$\\
\>\>\> $\eps_i:=(\mu+C_1 \omega)\eps_{i-1}$ \\
\>\> {\tt end do}
\end{algotab}

\begin{corollary}[convergence and instance optimality]\label{C:conv}
Assuming \eqref{0}-\eqref{3} and  \eqref{4}, the sequences $(\bar{u}_i)$, $(\cD_i)$ produced in the practical \HPAFEM above satisfy properties \eqref{th2.1:1} and \eqref{th2.1:2} in Theorem~\ref{th1}.  


\end{corollary}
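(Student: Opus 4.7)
The plan is to mirror the induction of Theorem~\ref{th1}, the only substantive change being that the tolerance passed to \Reduce is relative rather than absolute. The inductive hypothesis $\|u-\bar u_{i-1}\|_V \leq \eps_{i-1}$ together with the tolerance $\omega \eps_{i-1}$ prescribed for the call of \Hpnearbest immediately gives $\err_{\cD_i}(\bar u_{i-1},f,\lambda)^{\frac12} \leq \omega \eps_{i-1}$; assumption \eqref{2} then provides $\|u - u(f_{\cD_i},\lambda_{\cD_i})\|_V \leq C_1 \omega \eps_{i-1}$, so by the triangle inequality it suffices to verify $\|u(f_{\cD_i},\lambda_{\cD_i}) - \bar u_i\|_V \leq \mu \eps_{i-1}$ in order to match $\eps_i=(\mu+C_1\omega)\eps_{i-1}$.

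The heart of the argument is to bound the right-hand side of \eqref{eq:contr}. I would first insert $\bar u_{i-1}$ via the triangle inequality,
\begin{equation*}
\inf_{v \in V_{\cC(\cD_i)}^c} \|u(f_{\cD_i},\lambda_{\cD_i}) - v\|_V
\leq \|u(f_{\cD_i},\lambda_{\cD_i}) - \bar u_{i-1}\|_V + \inf_{v \in V_{\cC(\cD_i)}^c} \|\bar u_{i-1} - v\|_V,
\end{equation*}
control the first summand by $(1+C_1\omega)\eps_{i-1}$ via a further triangle inequality and \eqref{2}, and control the second via \eqref{4} applied to $\bar u_{i-1}$ together with the \Hpnearbest tolerance, which yields $C_{3,\cD_i}\omega \eps_{i-1}$. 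Multiplying the sum $(1+(C_1+C_{3,\cD_i})\omega)\eps_{i-1}$ by the prescribed $\varrho=\mu/(1+(C_1+C_{3,\cD_i})\omega)$ produces exactly $\mu\eps_{i-1}$; the denominator of $\varrho$ has been tuned so as to absorb both the data perturbation from \eqref{2} and the discontinuous-to-conforming overhead from \eqref{4}. The remaining assertion on the error functional in \eqref{th2.1:1} and the instance optimality bound \eqref{th2.1:2} then follow verbatim from the proof of Theorem~\ref{th1}, as they rely only on \eqref{3}, on the near-best output of \Hpnearbest, and on the already established bound $\|u-\bar u_{i-1}\|_V \leq \eps_{i-1}$.

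The obstacle worth flagging is the natural temptation to apply \eqref{4} directly with $v=u$: this would multiply the bound $\err_{\cD_i}(u,f,\lambda)^{\frac12} \leq (\omega+C_2)\eps_{i-1}$ by $C_{3,\cD_i}$ and leave an extra $C_{3,\cD_i}C_2\omega$ contribution that the denominator of $\varrho$ is not designed to absorb. Pivoting through $\bar u_{i-1}$ in the triangle inequality is what makes \eqref{4} quantitatively effective, because \Hpnearbest guarantees that $\inf_{(f',\lambda')} \err_{\cD_i}(\bar u_{i-1},f',\lambda')^{\frac12}$ is at most $\omega \eps_{i-1}$, free of any Lipschitz constant.
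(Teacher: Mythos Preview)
Your proposal is correct and matches the paper's proof essentially line for line: both argue by induction on $\|u-\bar u_{i-1}\|_V\le\eps_{i-1}$, bound $\inf_{v\in V^c_{\cC(\cD_i)}}\|u(f_{\cD_i},\lambda_{\cD_i})-v\|_V$ by $(1+(C_1+C_{3,\cD_i})\omega)\eps_{i-1}$ via the triangle inequality pivoted through $\bar u_{i-1}$ together with \eqref{2} and \eqref{4}, and then let the tuned $\varrho$ cancel that factor to recover $\mu\eps_{i-1}$. The paper writes the triangle inequality as a single three-term split rather than your two-stage one, but the terms and bounds are identical; your remark about why pivoting through $\bar u_{i-1}$ rather than $u$ is essential is a helpful clarification not spelled out in the original.
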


\begin{proof}
In view of the second part of the proof of Theorem~\ref{th1}, it is sufficient to prove that $\|u-\bar{u}_i\|_V \leq \eps_i$.
We argue by induction. If $\|u-\bar{u}_i\|_V \leq \eps_{i-1}$, which is valid for $i=1$, then, after the $i$th call of \Hpnearbest, \eqref{4} and \eqref{2} imply that 
\begin{equation}\label{initial-PDE}
\begin{aligned}
\inf_{v \in V_{\cC(\cD_i)}^c} & \|u(f_{\cD_i},\lambda_{\cD_i}) -v\|_V  \\
&\leq \|u-\bar{u}_{i-1}\|_V
+\inf_{v \in V_{\cC(\cD_i)}^c} \|\bar{u}_{i-1}-v\|_V +\|u-u(f_{\cD_i},\lambda_{\cD_i})\|_V
\\ 
& \leq \eps_{i-1}+C_{3,\cD_i} \err_{\cD_i}(\bar{u}_{i-1},f,\lambda)+C_1 \err_{\cD_i}(\bar{u}_{i-1},f,\lambda)
\\
& \leq
(1+(C_{3,\cD_i}+C_1) \omega) \eps_{i-1}.
\end{aligned}
\end{equation}
Consequently, after the subsequent call of \Reduce, it holds that $\|u(f_{\cD_i},\lambda_{\cD_i}) - \bar{u}_i\|_V \le \mu\eps_{i-1}$ according to 
\eqref{eq:contr}. This result combined with \eqref{1st-stat} shows that $\|u-\bar{u}_i\|_V\le\eps_i$.
\end{proof}

\begin{remark}[complexity of \HPAFEM]\label{R:complexity}
\rm
Let us consider the case that the constants $C_{3,\cD}$, defined in \eqref{4},
are insensitive to $\cD$, namely,
\begin{equation} \label{18}
 C_3:=\sup_{\cD \in \mathbb{D}} C_{3,\cD}<\infty.
  \end{equation}
This entails that the reduction factor
$\varrho_i=\frac{\mu}{1+(C_1+C_{3,\cD_i})\omega}$ of \Reduce satisfies
$\inf_i \varrho_i>0$. Additionally, suppose that, given a fixed $\varrho \in (0,1]$, \Reduce realizes \eqref{eq:contr} with
\begin{equation} \label{26}
\sup_{\cD \in \mathbb{D}^c} \frac{\#\bar{\cD}(\cD,\varrho)}{\# \cD} <\infty.
\end{equation}
If, furthermore,
\begin{equation} \label{19}
{C_4:=\sup_{\cD \in \mathbb{D}} \frac{\#\cC(\cD)}{\#\cD} < \infty,}
\end{equation}
then the sequences $(\cD_i)_i$ and $(\bar{\cD_i})_i$ produced in
\HPAFEM are so that $\# \bar{\cD}_i \lesssim \# \cD_i$.
In view of the optimal control over $\# \cD_i$, given by
Theorem~\ref{th1} and Corollary~\ref{C:conv}, we would have
optimal control over the dimension of any $hp$-finite element
space created within \HPAFEM. This ideal situation only happens in 
the one-dimensional case.
\end{remark}

\subsection{A possible realization of \Reduce} \label{reduce}

Let $A_\lambda \in \cL(V,V')$ for all $\lambda\in \bar\Lambda$
{and} define the associated continuous bilinear 
form $ a_\lambda(v,w):=\langle A_\lambda v , w \rangle$ for any $v,w\in V$, where $\langle \cdot, \cdot \rangle$ denotes the duality pairing between $V$ and $V^\prime$. We assume that $A_\lambda$ is symmetric, which is equivalent to the symmetry of the form $a_\lambda$. We furtherly assume that each $a_\lambda$ is continuous  and coercive  on $V$,  with continuity and coercivity constants $\alpha^* \geq \alpha_* >0$ independent of $\lambda\in \bar{\Lambda}$.
It is convenient to introduce in $V$ the energy norm $\tvert v \tvert_{\lambda}=\sqrt{a_\lambda(v,v)}$ associated with the form $a_\lambda$, which
satisfies $\sqrt{\alpha_*} \Vert v \Vert_V \leq \tvert v \tvert_\lambda \leq \sqrt{\alpha^*} \Vert v \Vert_V$ for all $v \in V$. Let $F \subset V'$.

Given $\cD \in \mathbb{D}$ and data $(f,\lambda) \in F \times \bar{\Lambda}$,  the (Galerkin) solution $u_\cD(f,\lambda) \in V_\cD^c$ of 
\begin{equation}\label{eq:galerkin}
a_{\lambda}(u_\cD(f,\lambda), v)=\langle f, v\rangle  \quad \forall v \in V_\cD^c
\end{equation}
is the best approximation to $u(f,\lambda)$ from $V_\cD^c$ in $\nrm \cdot \nrm_{\lambda}$.
In view of a posteriori error estimation, we will consider Galerkin solutions from $V_\cD^c$ only for data in $F_\cD \times \Lambda_\cD$, i.e., for data without data oscillation w.r.t.  $\cD$.

For $\cD \in \mathbb{D}^c$, $D \in \cD$, let us introduce local a posteriori error indicators
$$
{\eta_{D,\cD}: V_\cD^c \times F_\cD \times \Lambda_\cD \rightarrow [0,\infty),}
$$
which give rise to {the} global estimator
\begin{equation}\label{global-error-est}
{\est_\cD(v,f_\cD,\lambda_\cD):=\left(\sum_{D \in \cD} \eta_{D,\cD}^2(v,f_\cD,\lambda_\cD)\right)^{1/2}.}
\end{equation}
Given data $(f_\cD,\lambda_\cD)$ without data oscillation w.r.t. $\cD$, $\est_\cD(v,f_\cD,\lambda_\cD)$ will be used with $v=u_\cD(f_\cD,\lambda_\cD)$ as an estimator for the squared error in this Galerkin approximation to $u(f_\cD,\lambda_\cD)$.
It should not be confused with $E_\cD(v,f,\lambda)$, the latter being the sum of local error functionals $e_D(v,f,\lambda)$, that estimates the squared error in a projection on $V_\cD \times F_\cD \times \Lambda_\cD$ of $(v,f,\lambda) \in V \times F \times \Lambda$.

Given any $\cM \subset \cD$, it will be useful to associate the
estimator {restricted to $\cM$}
$$
\est_\cD(\cM,v,f_\cD,\lambda_\cD):=\left(\sum_{D \in \cM} \eta_{D,\cD}^2(v,f_\cD,\lambda_\cD)\right)^{1/2}.
$$
We assume that $\est_\cD$ satisfies the following assumptions:
\begin{itemize}
\item {\bf Reliability:}
For $\cD \in \mathbb{D}^c$, and $(f_\cD,\lambda_\cD) \in F_{\cD}
\times \Lambda_{\cD}$, {there holds}
\begin{equation} \label{23}
\|u(f_\cD,\lambda_\cD)-u_{\cD}(f_\cD,\lambda_\cD)\|_V \lesssim 
{\est_\cD(u_{\cD}(f_\cD,\lambda_\cD),f_\cD,\lambda_\cD).}
\end{equation}
%
\item {\bf Discrete efficiency:} 
For $\cD \in \mathbb{D}^c$, $(f_\cD,\lambda_\cD) \in F_{\cD}
\times \Lambda_{\cD}$, and for any $\cM \subset \cD$,
there exists a $ \bar{\cD}(\cM)\in \mathbb{D}^c$ with   $\bar{\cD}(\cM) \geq \cD$ and $\# \bar{\cD}(\cM) \lesssim \# \cD$, such that 
\begin{equation}  \label{24}
\|u_{\bar{\cD}(\cM)}(f_\cD,\lambda_\cD)-u_{\cD}(f_\cD,\lambda_\cD)\|_V \gtrsim 
{\est_\cD(\cM,u_{\cD}(f_\cD,\lambda_\cD),f_\cD,\lambda_\cD).}
\end{equation}
\end{itemize}

Then a valid procedure \Reduce is defined as follows.
\begin{algotab}
\> $[\bar{\cD},u_{\bar{\cD}}]=\Reduce(\varrho,\cD,f_\cD,\lambda_\cD)$ \\
\>\> \% Input: $\varrho \in (0,1]$, $\cD \in \mathbb{D}^c$, $(f_\cD,\lambda_\cD) \in F_\cD \times \Lambda_\cD$.\\
\>\> \% Output: $\bar{\cD} \in \mathbb{D}^c$ with  $\bar{\cD} \geq \cD$, and the Galerkin solution $u_{\bar{\cD}}=u_{\bar{\cD}}(f_\cD,\lambda_\cD)$.\\
\>\> \% Parameters: $\theta \in (0,1]$ fixed.\\
\>\> {Compute {$M:=M(\varrho) \in \mathbb{N}$ sufficiently large, cf. Proposition~\ref{prop_reduce}.}}\\
\>\> $\cD_0:=\cD$;  \Solve: compute $u_{\cD_0}(f_\cD,\lambda_\cD)$\\
\>\> {\tt for $i=1$ to $M$ do} \\ 
\>\>\> \Estimate: {\tt compute}
$\{\eta^{{2}}_{D,\cD_{i-1}}(u_{\cD_{i-1}}(f_\cD,\lambda_\cD),f_\cD,\lambda_\cD)\colon
D \in \cD_{i-1}\}$ \\
\>\>\> \Mark: {\tt select $\cM_{i-1} \subseteq \cD_{i-1}$ with} \\ \\
\>\>\>\>
$
\est^2_{\cD_{i-1}}(\cM_{i-1},u_{\cD_{i-1}}(f_\cD,\lambda_\cD),f_\cD,\lambda_\cD)
  \geq
  \theta\est^2_{\cD_{i-1}}(u_{\cD_{i-1}}(f_\cD,\lambda_\cD),f_\cD,\lambda_\cD)
$
\\ \\
\>\>\> \Refine: $\cD_i:=\bar{\cD}(\cM_{i-1})$ \\
\>\>\> \Solve: compute $u_{\cD_i}(f_\cD,\lambda_\cD)$ \\
\>\> {\tt end} \\
\>\> $\bar{\cD}:=\cD_M$; $u_{\bar{\cD}}=u_{\cD_M}(f_\cD,\lambda_\cD)$
\end{algotab}

\vspace{0.5cm}

\begin{proposition} \label{prop_reduce} 
Assuming \eqref{23} and \eqref{24}, the number {$M=M(\varrho)$} of iterations that are required so that
$[\bar{\cD},u_{\bar{\cD}}(f_\cD,\lambda_\cD)]=\Reduce(\varrho,\cD,f_\cD,\lambda_\cD)$ satisfies
$$
\|u(f_\cD,\lambda_\cD)-u_{\bar{\cD}}(f_\cD,\lambda_\cD)\|_V \leq \varrho \inf_{v \in V_\cD^c} \|u(f_\cD,\lambda_\cD)-v\|_V
$$
is at most proportional to $\log \varrho^{-1}$, and $\# \bar{\cD} \lesssim \# \cD$, both independent of $\cD \in \mathbb{D}^c$, and $(f_\cD,\lambda_\cD) \in F_\cD \times \Lambda_\cD$.
So both \eqref{eq:contr} and \eqref{26} are realized.
\end{proposition}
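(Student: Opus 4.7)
The plan is to follow the standard AFEM contraction argument, specialized here to the discrete-data setting where no data-oscillation terms appear. Writing $u := u(f_\cD,\lambda_\cD)$ and $u_i := u_{\cD_i}(f_\cD,\lambda_\cD)$, and using the energy norm $\tvert \cdot \tvert_\lambda$ (which is equivalent to $\|\cdot\|_V$ with constants $\sqrt{\alpha_*},\sqrt{\alpha^*}$), I will first establish a uniform contraction $\tvert u - u_i \tvert_\lambda \leq \alpha \tvert u - u_{i-1} \tvert_\lambda$ with some $\alpha = \alpha(\theta) \in (0,1)$, then iterate it to conclude.

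For the contraction step, symmetry of $a_\lambda$ gives the Galerkin orthogonality / Pythagorean identity
\begin{equation*}
\tvert u - u_{i-1}\tvert_\lambda^2 \;=\; \tvert u - u_i\tvert_\lambda^2 + \tvert u_i - u_{i-1}\tvert_\lambda^2,
\end{equation*}
since $V_{\cD_{i-1}}^c \subseteq V_{\cD_i}^c$. Next, the Dörfler marking condition inside \Mark together with discrete efficiency \eqref{24} applied to $\cM_{i-1}$ (whose induced refinement $\bar{\cD}(\cM_{i-1}) = \cD_i$ is nested in $\cD_i$) yields
\begin{equation*}
\tvert u_i - u_{i-1}\tvert_\lambda^2 \;\gtrsim\; \est^2_{\cD_{i-1}}(\cM_{i-1},u_{i-1},f_\cD,\lambda_\cD) \;\geq\; \theta\, \est^2_{\cD_{i-1}}(u_{i-1},f_\cD,\lambda_\cD),
\end{equation*}
and reliability \eqref{23} closes the loop: $\est^2_{\cD_{i-1}}(u_{i-1},f_\cD,\lambda_\cD) \gtrsim \|u - u_{i-1}\|_V^2 \gtrsim \tvert u - u_{i-1}\tvert_\lambda^2$. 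Combining these three ingredients gives $\tvert u_i - u_{i-1}\tvert_\lambda^2 \geq c\theta\,\tvert u - u_{i-1}\tvert_\lambda^2$ for some absolute $c>0$, and inserting this into the Pythagorean identity produces the contraction with $\alpha^2 = 1 - c\theta < 1$.

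Iterating $M$ times yields $\tvert u - u_M\tvert_\lambda \leq \alpha^M \tvert u - u_0\tvert_\lambda$. Since $u_0 = u_{\cD}(f_\cD,\lambda_\cD)$ is the energy-norm best approximation to $u$ from $V_\cD^c$, passing between $\tvert\cdot\tvert_\lambda$ and $\|\cdot\|_V$ via $\alpha_*,\alpha^*$ gives
\begin{equation*}
\|u - u_M\|_V \;\leq\; \sqrt{\alpha^*/\alpha_*}\,\alpha^M \inf_{v \in V_\cD^c}\|u - v\|_V,
\end{equation*}
so choosing $M = \lceil \log(\sqrt{\alpha^*/\alpha_*}\,\varrho^{-1})/\log(\alpha^{-1})\rceil$ establishes the target bound, with $M = M(\varrho) \lesssim \log \varrho^{-1}$, independently of $\cD$ and the data. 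Finally, for the cardinality, the discrete-efficiency assumption provides $\#\cD_i \leq C_5\,\#\cD_{i-1}$ with $C_5$ absolute, hence $\#\bar{\cD} = \#\cD_M \leq C_5^M\,\#\cD \lesssim \#\cD$, where the hidden constant depends only on $\varrho$ (through $M$) and the generic AFEM constants. Thus both \eqref{eq:contr} and \eqref{26} follow.

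I expect the only delicate point to be checking that discrete efficiency \eqref{24} can be applied with $\bar{\cD}(\cM_{i-1}) = \cD_i$ (i.e.\ that the refinement actually constructed inside \Refine is the one used in the efficiency bound); this is by construction. The contraction itself is routine once symmetry, Pythagoras, Dörfler marking, reliability and discrete efficiency are lined up in the order above, and no oscillation appears because the data $(f_\cD,\lambda_\cD) \in F_\cD\times\Lambda_\cD$ is already discrete relative to $\cD \leq \cD_i$.
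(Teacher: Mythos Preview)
Your proof is correct and follows essentially the same route as the paper's own argument: discrete efficiency \eqref{24} combined with D\"orfler marking and reliability \eqref{23} yields the saturation estimate $\tvert u_i - u_{i-1}\tvert_\lambda^2 \geq c\theta\,\tvert u - u_{i-1}\tvert_\lambda^2$, which together with the Pythagorean identity gives the energy-norm contraction; iterating and converting to $\|\cdot\|_V$ via $\alpha_*,\alpha^*$ yields the error bound, while the cardinality bound follows from $\#\cD_i \lesssim \#\cD_{i-1}$ iterated $M$ times. The only cosmetic difference is the order in which Pythagoras and saturation are introduced.
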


\begin{proof} 
Since $f_\cD$ and $\lambda_\cD$ are fixed, for simplicity we
  drop them from our notations. 
Applying \eqref{24} with $\cD=\cD_{i-1}$ and $\cD_i=\bar\cD(\cM_{i-1})$, 
the definition of \Mark, and \eqref{23} we get 
\begin{equation*}
\begin{aligned}
\| u_{\cD_{i}}- u_{\cD_{i-1}}\|^2_V &\gtrsim
{\est^2_{\cD_{i-1}}(\cM_{i-1},u_{\cD_{i-1}}, f_{\cD},\lambda_\cD)} \\
&\geq \theta  {\est^2_{\cD_{i-1}}(u_{\cD_{i-1}}, f_{\cD},\lambda_\cD)} \\
&\gtrsim \theta \| u- u_{\cD_{i-1}}\|^2_V.
\end{aligned}
\end{equation*}
This and the uniform equivalence of $\|\cdot\|_V$ and $\nrm\cdot\nrm_{\lambda_\cD}=:\nrm\cdot\nrm$
give the {\it saturation property}
\begin{eqnarray}
\nrm u_{\cD_{i}}- u_{\cD_{i-1}}\nrm^2 \geq C_* \theta \nrm u- u_{\cD_{i-1}}\nrm^2
\end{eqnarray}
for some positive {constant} $C_*$. Then, using Pythagoras' identity
\begin{equation} \label{30}
\nrm u-u_{\cD_i}\nrm^2=\nrm u-u_{\cD_{i-1}}\nrm^2-
\nrm u_{\cD_i}-u_{\cD_{i-1}}\nrm^2,
\end{equation}
we obtain the {\it contraction property} 
\begin{equation}\label{contraction}
\nrm u-u_{\cD_i}\nrm\leq \kappa \nrm u-u_{\cD_{i-1}}\nrm
\end{equation}
for $\kappa=\sqrt{1-C_*\theta}<1$. We conclude that 
\begin{align*}
 \| u-u_{\cD_{M}} \|_{V} &\leq \frac{1}{\sqrt{\alpha_*}} 
\tvert u-u_{\cD_{M}} \tvert
\leq \frac{1}{\sqrt{\alpha_*}}  \kappa^M\tvert u-u_\cD \tvert \\
&=\frac{1}{\sqrt{\alpha_*}}  \kappa^M \inf_{v \in V_\cD^c}   \tvert u-v \tvert
\leq \sqrt{\frac{\alpha^*}{\alpha_*}} \kappa^M \inf_{v \in V_{\cD}^c}\|u-v\|_V.
\end{align*}
{ Enforcing $\sqrt{\frac{\alpha^*}{\alpha_*}} \kappa^M\leq \varrho$ yields $M={\cal O}(\log \varrho^{-1})$. In addition, since $\# \cD_i \lesssim \# \cD_{i-1}$ for $1\leq i \leq M$ according to \eqref{24}, the proof is complete.
}
\end{proof}

\begin{remark}
\rm The partition $\bar{\cD}(\cM)$ can be built by an `$h$-refinement'  or a `$p$-enrichment', or both, of the elements $D \in \cM$, if necessary followed by a `completion step' by an application of the mapping $\cC$ in order to land in $\mathbb{D}^c$.
The estimate $\# \bar{\cD}(\cM) \lesssim \# \cD$ shows no benefit in taking $\theta<1$, i.e., in taking a local, `adaptive' refinement.
In our algorithm \HPAFEM, the adaptive selection of suitable $hp$ partitions takes place in \Hpnearbest. Nevertheless, in a quantitative sense it can be beneficial to incorporate adaptivity in \Reduce as well, by selecting,  for a $\theta<1$, a (near) minimal set $\cM \subset \cD_{i-1}$ in \Mark.
\end{remark}

\begin{remark}
\rm 
{The discrete efficiency of the estimator implies its ``continuous'' efficiency.
Indeed, taking $\cM=\cD$ in \eqref{24} and denoting $\bar{\cD}=\bar{\cD}(\cD)$, and temporarily dropping $f_\cD$ and $\lambda_\cD$ from our notations, we have
\begin{align*}
\est_\cD(u_\cD)^2 &\lesssim \alpha_* \|u_{\bar{\cD}}-u_\cD\|_V^2 \leq  \tvert u_{\bar{\cD}}-u_\cD \tvert^2_\lambda = \tvert u-u_\cD \tvert^2_\lambda-\tvert u-u_{\bar{\cD}} \tvert^2_\lambda \leq \tvert u-u_\cD \tvert^2_\lambda\\
&=\inf_{v \in V_\cD^c} \tvert u-v \tvert^2_\lambda\leq \alpha^* \inf_{v \in V_\cD^c} \|u-v \|_V^2.
\end{align*}
Consequently, recalling \eqref{23}, a stopping criterium for \Reduce could be defined as follows 
$$ {\est_{\cD_i}(u_{\cD_i}(f_\cD,\lambda_\cD),f_\cD,\lambda_\cD)}\leq C \varrho {\est_\cD(u_{\cD}(f_\cD,\lambda_\cD),f_\cD,\lambda_\cD)},$$
where $C$ is a constant in terms of the ``hidden constants'' in \eqref{23} and \eqref{24}, and $\alpha_*$ and $\alpha^*$.}

\end{remark}

\medskip
Assumptions \eqref{23}-\eqref{24} about reliability and  discrete efficiency can be substituted
by the following three assumptions concerning the estimator.
{This will be used for our application in two dimensions in Sect.~\ref{S3}.}

\begin{itemize}
\item {\bf Reliability and efficiency:}
For $\cD \in \mathbb{D}^c$, there exists $R_\cD, r_\cD>0$, such that
for  $(f_\cD,\lambda_\cD) \in F_{\cD} \times \Lambda_{\cD}$, and
$\nrm\cdot\nrm_{\lambda_\cD}=:\nrm\cdot\nrm$ one has
\begin{equation} \label{260}
\begin{aligned}
r_\cD \est^2_\cD(u_{\cD}(f_\cD,\lambda_\cD),f_\cD,\lambda_\cD) &\leq
\nrm u(f_\cD,\lambda_\cD)-u_{\cD}(f_\cD,\lambda_\cD)\nrm^2 \\
& \leq R_\cD
\est^2_\cD(u_{\cD}(f_\cD,\lambda_\cD),f_\cD,\lambda_\cD);
\end{aligned}
\end{equation}

\item {\bf Stability:} {For $\cD \in \mathbb{D}^c$, and all $(f_\cD,\lambda_\cD) \in F_{\cD} \times \Lambda_{\cD}$, $v,w \in V^c_\cD$} one has
\begin{equation} \label{27}
\sqrt{r_\cD}\Big |{\est_\cD(v,f_\cD,\lambda_\cD)}-{\est_\cD(w,f_\cD,\lambda_\cD)}\Big|\leq \nrm v-w\nrm.
\end{equation}

\item {\bf Estimator reduction upon refinement:}
{
There exists a constant $\gamma<1$, such that 
for any $\cM \subset \cD \in \mathbb{D}^c$, there exists a $\bar{\cD}(\cM)\in \mathbb{D}^c$  with $\bar{\cD}(\cM) \geq \cD$, $\# \bar{\cD}(\cM) \lesssim \# \cD$, such that with 
$\bar{\mathcal{S}}:= \{\bar{D} \in \bar{\cD}(\cM):\exists D \in \cM \text{ with } K_{\bar{D}} \subset K_D\}$,
\begin{equation}  \label{28}
\begin{gathered}
{
\est^2_{\bar{\cD}(\cM)}(\bar{\mathcal{S}}, u_{\cD}(f_\cD,\lambda_\cD),f_\cD,\lambda_\cD) \leq \gamma 
\est^2_\cD(\mathcal{\cM},
u_{\cD}(f_\cD,\lambda_\cD),f_\cD,\lambda_\cD)
}
\\
{
\est^2_{\bar{\cD}(\cM)}( \bar{\cD}(\cM)\setminus \bar{\mathcal{S}}, u_{\cD}(f_\cD,\lambda_\cD),f_\cD,\lambda_\cD) \leq 
\est^2_\cD(\cD\setminus \mathcal{\cM}, u_{\cD}(f_\cD,\lambda_\cD),f_\cD,\lambda_\cD),
}
\end{gathered}
\end{equation}
for any $(f_\cD,\lambda_\cD) \in F_{\cD} \times \Lambda_{\cD}$.}
\end{itemize}
With $\theta$ from \Reduce and $\gamma$ from \eqref{28}, we set $\bar{\gamma}:=(1-\theta)+\theta \gamma$. For $\cD \leq \widehat\cD \in \mathbb{D}^c$, and $(f_\cD,\lambda_\cD) \in F_{\cD} \times \Lambda_{\cD}$, we define the squared {\it total error} to be
\[
\errtotal_{\widehat\cD}^2(u_{\widehat \cD}(f_\cD,\lambda_\cD),f_\cD,\lambda_\cD)
:= \nrm u(f_\cD,\lambda_\cD)-u_{\widehat\cD}(f_\cD,\lambda_\cD)\nrm^2
+ (1-\sqrt{\bar{\gamma}}) r_{\widehat\cD} \est^2_{\widehat\cD}(u_{\widehat\cD}(f_\cD,\lambda_\cD),f_\cD,\lambda_\cD).
\]

%

\begin{proposition} \label{prop12} 
Assume \eqref{260}, \eqref{27}, and \eqref{28}, and, inside \Reduce, take $\bar{\cD}(\cM)$ as defined in \eqref{28}.
Let $\cD \in \mathbb{D}^c$, and $(f_\cD,\lambda_\cD) \in F_{\cD} \times \Lambda_{\cD}$.
Then consecutive iterands produced in $\Reduce(\varrho,\cD,f_\cD,\lambda_\cD)$ satisfy
\begin{align*}
\errtotal_{\cD_i}^2(u_{\cD_i}(f_\cD,\lambda_\cD),f_\cD,\lambda_\cD)
\le 
{\Big[1-\frac{(1-\sqrt{\bar{\gamma}})^2}{2}\frac{r_{\cD_i}}{R_{\cD_{i-1}}}
\Big]}
\errtotal_{\cD_{i-1}}^2(u_{\cD_{i-1}}(f_\cD,\lambda_\cD),f_\cD,\lambda_\cD).
\end{align*}
Furthermore, for $\cD \in \mathbb{D}^c$ and $(f_\cD,\lambda_\cD) \in F_{\cD} \times \Lambda_{\cD}$, 
\begin{align*}
\nrm u(f_\cD,\lambda_\cD)-u_\cD(f_\cD,\lambda_\cD)\nrm^2 \leq
\errtotal_\cD^2(u_{\cD}(f_\cD,\lambda_\cD),f_\cD,\lambda_\cD)
\leq 
2 \nrm u(f_\cD,\lambda_\cD)-u_\cD(f_\cD,\lambda_\cD)\nrm^2.
\end{align*}
Therefore,
if $\displaystyle{\sup_{\cD \in \mathbb{D}^c} R_\cD}<\infty$ and $\displaystyle{\inf_{\cD \in \mathbb{D}^c} r_\cD}>0$, then 
the statement of Proposition~\ref{prop_reduce} is again valid.
\end{proposition}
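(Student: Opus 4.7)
The proof naturally breaks into three parts: the two-sided equivalence of $\errtotal_\cD^2$ with the squared energy error, the per-iteration contraction, and the corollary about $M$.

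The equivalence is immediate. The lower bound $\nrm u-u_\cD\nrm^2 \le \errtotal_\cD^2$ holds because the estimator contribution in the definition is nonnegative. For the upper bound I would use the efficiency half of \eqref{260}, namely $r_\cD\, \est_\cD^2(u_\cD) \le \nrm u-u_\cD\nrm^2$, together with $0 < 1-\sqrt{\bar\gamma}\le 1$, which gives $\errtotal_\cD^2 \le 2\nrm u-u_\cD\nrm^2$.

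For the contraction I would follow the CKNS-type template. First, the marking condition combined with the estimator reduction \eqref{28} (splitting into $\bar{\mathcal S}$ and its complement) yields
\[
\est_{\cD_i}^2(u_{\cD_{i-1}}) \le \bar\gamma\, \est_{\cD_{i-1}}^2(u_{\cD_{i-1}}),
\qquad \bar\gamma = (1-\theta)+\theta\gamma.
\]
Next, stability \eqref{27} applied to the pair $u_{\cD_i}, u_{\cD_{i-1}} \in V_{\cD_i}^c$ together with Young's inequality, with the free parameter tuned so that $(1+\delta)\bar\gamma = \sqrt{\bar\gamma}$, gives
\[
\est_{\cD_i}^2(u_{\cD_i}) \le \sqrt{\bar\gamma}\,\est_{\cD_{i-1}}^2(u_{\cD_{i-1}}) + \frac{1}{1-\sqrt{\bar\gamma}}\,\frac{1}{r_{\cD_i}}\,\nrm u_{\cD_i}-u_{\cD_{i-1}}\nrm^2.
\]
Then Galerkin orthogonality in the energy norm (Pythagoras, as in \eqref{30}) gives $\nrm u-u_{\cD_i}\nrm^2 = \nrm u-u_{\cD_{i-1}}\nrm^2 - \nrm u_{\cD_i}-u_{\cD_{i-1}}\nrm^2$. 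Substituting both relations into the definition of $\errtotal_{\cD_i}^2$, the $\nrm u_{\cD_i}-u_{\cD_{i-1}}\nrm^2$ contributions cancel, leaving
\[
\errtotal_{\cD_i}^2 \le \nrm u-u_{\cD_{i-1}}\nrm^2 + (1-\sqrt{\bar\gamma})\sqrt{\bar\gamma}\, r_{\cD_i}\,\est_{\cD_{i-1}}^2(u_{\cD_{i-1}}).
\]
Writing $\sqrt{\bar\gamma}=1-(1-\sqrt{\bar\gamma})$ peels off a negative term $-(1-\sqrt{\bar\gamma})^2 r_{\cD_i}\est_{\cD_{i-1}}^2$, which is the contraction reserve. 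Using reliability from \eqref{260} in the form $\est_{\cD_{i-1}}^2 \ge \nrm u-u_{\cD_{i-1}}\nrm^2/R_{\cD_{i-1}}$, combined with the already-proven upper equivalence $\errtotal_{\cD_{i-1}}^2\le 2\nrm u-u_{\cD_{i-1}}\nrm^2$, one obtains $\est_{\cD_{i-1}}^2 \ge \errtotal_{\cD_{i-1}}^2/(2R_{\cD_{i-1}})$, and folding this into the positive part produces the asserted factor $\tfrac{(1-\sqrt{\bar\gamma})^2}{2}\,r_{\cD_i}/R_{\cD_{i-1}}$.

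Finally, under $\sup_\cD R_\cD<\infty$ and $\inf_\cD r_\cD>0$ the contraction factor is a fixed $\kappa^2<1$, so after $M$ inner iterations of \Reduce one has $\errtotal_{\cD_M}^2 \le \kappa^{2M}\errtotal_{\cD_0}^2$; by the two-sided equivalence and the energy-norm best-approximation property this yields $\nrm u-u_{\cD_M}\nrm \lesssim \kappa^M \inf_{v\in V_\cD^c}\nrm u-v\nrm$, and enforcing the threshold $\varrho$ gives $M=\mathcal O(\log\varrho^{-1})$. The bound $\#\bar\cD \lesssim \#\cD$ follows inductively from the cardinality bound $\#\bar\cD(\cM)\lesssim\#\cD$ in \eqref{28}, just as in the proof of Proposition \ref{prop_reduce}. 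The most delicate step is the bookkeeping in the contraction: the total errors at steps $i$ and $i-1$ use different constants $r_{\cD_i}$ and $r_{\cD_{i-1}}$, and the algebra must be arranged so that the negative remainder $-(1-\sqrt{\bar\gamma})^2 r_{\cD_i}\est_{\cD_{i-1}}^2$ dominates the mismatch term $(1-\sqrt{\bar\gamma})(r_{\cD_i}-r_{\cD_{i-1}})\est_{\cD_{i-1}}^2$ after the reliability substitution, exactly yielding the factor $r_{\cD_i}/R_{\cD_{i-1}}$ stated in the proposition.
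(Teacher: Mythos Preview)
Your argument is essentially the paper's proof. Both derive the estimator reduction $\est_{\cD_i}^2(u_{\cD_{i-1}})\le\bar\gamma\,\est_{\cD_{i-1}}^2(u_{\cD_{i-1}})$ from \Mark and \eqref{28}, combine stability \eqref{27} with Young's inequality using the same parameter (your $(1+\delta)\bar\gamma=\sqrt{\bar\gamma}$ is exactly the paper's $\zeta=\bar\gamma^{-1/2}-1$), and add Pythagoras to obtain
\[
\errtotal_{\cD_i}^2 \le \nrm u-u_{\cD_{i-1}}\nrm^2 + \sqrt{\bar\gamma}\,(1-\sqrt{\bar\gamma})\,r_{\cD_i}\,\est_{\cD_{i-1}}^2(u_{\cD_{i-1}}).
\]
The only cosmetic difference is in the last step: the paper introduces a free parameter $\beta\in[0,1]$, uses reliability to move a $(1-\beta)$-share of the energy error into the estimator term, and minimizes $\max\{\beta,\,(1-\beta)\tfrac{R_{\cD_{i-1}}}{(1-\sqrt{\bar\gamma})r_{\cD_i}}+\sqrt{\bar\gamma}\}$; you instead split $\sqrt{\bar\gamma}=1-(1-\sqrt{\bar\gamma})$ and feed the negative reserve through the equivalence $\errtotal_{\cD_{i-1}}^2\le 2\nrm u-u_{\cD_{i-1}}\nrm^2$ and reliability. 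Both routes land on the same contraction factor.

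One comment on the point you flag as ``most delicate'': after your split, the positive part reads $\nrm u-u_{\cD_{i-1}}\nrm^2+(1-\sqrt{\bar\gamma})r_{\cD_i}\est_{\cD_{i-1}}^2$, which matches $\errtotal_{\cD_{i-1}}^2$ only when $r_{\cD_i}\le r_{\cD_{i-1}}$. You acknowledge the mismatch but do not actually absorb it. The paper's $\beta$-optimization reaches the same place and is equally silent on this; in the paper's intended application (Sect.~\ref{SubPDE}) the refinement inside \Reduce leaves $\|p_\cD\|_\infty$ unchanged, so $r_{\cD_i}=r_{\cD_{i-1}}$ and the issue does not arise.
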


\begin{proof} {Since both $f_\cD$ and $\lambda_\cD$ are fixed, we
again drop them from our notations. Applying \Mark and \eqref{28}
yields} 
\begin{equation} \label{29}
{
\est^2_{\cD_i}(u_{\cD_{i-1}}) \leq \bar{\gamma}
\est^2_{\cD_{i-1}}(u_{\cD_{i-1}}).
}
\end{equation}
By virtue of \eqref{27}, Young's inequality, and \eqref{29}, 
we have that for any $\zeta>0$, 

\begin{align*}
\est^2_{\cD_i}(u_{\cD_i}) &\leq (1+\zeta)
\est^2_{\cD_i}(u_{\cD_{i-1}})+(1+\zeta^{-1}) r_{\cD_i}^{-1} \nrm
u_{\cD_i}-u_{\cD_{i-1}}\nrm^2
\\
&
\leq (1+\zeta) \bar{\gamma}
\est^2_{\cD_{i-1}}(u_{\cD_{i-1}})+(1+\zeta^{-1}) r_{\cD_i}^{-1} \nrm
u_{\cD_i}-u_{\cD_{i-1}}\nrm^2.
\end{align*}
By multiplying this inequality by $\frac{r_{\cD_i}}{(1+\zeta^{-1})}$, {substituting $\zeta=\bar{\gamma}^{-\frac{1}{2}}-1$},
and adding to Pythagoras' identity \eqref{30}, we obtain
\[
\nrm u-u_{\cD_i}\nrm^2+(1-\sqrt{\bar{\gamma}})r_{\cD_i} \est_{{\cD_i}}^2({\cD_i},u_{\cD_i}) \leq
\nrm u-u_{\cD_{i-1}}\nrm^2+ \sqrt{\bar{\gamma}}
(1-\sqrt{\bar{\gamma}})r_{\cD_i} \est_{{\cD_{i-1}}}^2({\cD_{i-1}},u_{\cD_{i-1}}).
\]
We resort to \eqref{260} to bound the right-hand side as
follows in terms of {an arbitrary} $\beta\in[0,1]$
\[
\beta \nrm u-u_{\cD_{i-1}}\nrm^2+\Big((1-\beta) \frac{R_{\cD_{i-1}}}{(1-\sqrt{\bar{\gamma}})r_{\cD_i}}+\sqrt{\bar{\gamma}}\Big) (1-\sqrt{\bar{\gamma}})r_{\cD_i} \est_{{\cD_{i-1}}}^2({\cD_{i-1}},u_{\cD_{i-1}}).
\]
We now observe that the following function of $\beta$ attains its
minimum at $\beta_*$
\[
\max_\beta \Big\{\beta,\Big((1-\beta)
\frac{R_{\cD_{i-1}}}{(1-\sqrt{\bar{\gamma}})r_{\cD_i}}+\sqrt{\bar{\gamma}}\Big)
\Big\} \ge
\beta_*
:= {1-\frac{1-\sqrt{\bar{\gamma}}}{1+\frac{R_{\cD_{i-1}}}{(1-\sqrt{\bar{\gamma}})r_{\cD_i}}}}.
\]
The proof of the first statement follows from  {$\frac{1-\sqrt{\bar{\gamma}}}{1+\frac{R_{\cD_{i-1}}}{(1-\sqrt{\bar{\gamma}})r_{\cD_i}}} \geq \frac{(1-\sqrt{\bar{\gamma}})^2}{2}\frac{r_{\cD_i}}{R_{\cD_{i-1}}}$}.
The second statement is a direct consequence of \eqref{260}, and the final statement follows directly from the first two.

\end{proof}


\section{{The module \Hpnearbest}} \label{sec:Binev}

{In this section we describe briefly the algorithm and theory recently
developed by P. Binev for $hp$-adaptive tree approximation
\cite{Bi:14}, which constitutes the building block behind the module
\Hpnearbest.}

\subsection{$h$-Adaptive Tree Approximation}\label{S:h-tree}

We first review the algorithm designed and studied by Binev and
DeVore \cite{BiDV04} for $h$-adaptive tree approximation.
Since, in this subsection, the local approximation spaces do not depend on $d$, temporarily we identify an element $D$ with the element domain $K_D$, and $\cD$ with the {\it $h$-partition} $\cK(\cD)$, the latter being an element of $\mathbb{K}$.

Recall that for any $\cK \in \mathbb{K}$, the set of all $K \in \cK$ together with their ancestors form a tree $\cT$, being a subtree of the master tree $\mathfrak{K}$.
Conversely, given such a subtree $\cT$, the set $\cL(\cT)$ of its leaves is a partition in $\mathbb{K}$.

For the moment, we will assume that the master tree $\mathfrak{K}$ has only {\it one root}. In the next subsection, in Remark~\ref{multiple-roots}, we will deal with the case that it has possibly multiple roots.

For any $K\in\mathfrak{K}$, let $e_K \geq 0$ be some {\it local $h$-error functional}. That means that it satisfies the key property \eqref{1}, that in this $h$-element setting reduces to
{\it subadditivity}: 
\[
e_{K'} + e_{K''} \le e_{K}
\]
where $K'$ and $K''$ denote the children of $K$.
The corresponding {\it global $h$-error functional} reads
\[
\err_{\cK} = \sum_{K\in\cK} e_{K}
\quad\forall \cK \in \mathbb{K}.
\]

The notion of a {\it best $h$-partition} w.r.t. this error functional is now apparent:
for $N \in \mathbb{N}$, let
%
\[
\sigma_{N} := \inf_{\#\cK\le N} \err_{\cK}.
\]
{This quantity} gives the smallest error achievable with $h$-partitions $\cK$ with
cardinality $\#\cK\le N$. In spite of the $\inf$ being a $\min$,
because the minimization is over a finite set, computing a tree that
realizes the $\min$ has exponential complexity.

A fundamental, but rather surprising, result of Binev and DeVore 
shows that a {\it near-best $h$-adaptive tree} is computable with
linear complexity. A key ingredient is a {\it modified local $h$-error functional}
$\tilde e_{{K}}$ defined as follows
for all $K\in\mathfrak{K}$:
\vskip0.3cm
\begin{itemize}
\item
$\tilde e_{K} := e_{K}$ if $K$ is the root;
\item
$\frac{1}{\tilde e_{K}} := \frac{1}{e_{K}} + \frac{1}{e_{K^*}}$
where ${K^*}$ is the parent of ${K}$ and $e_K \ne 0$; otherwise 
$\tilde e_K = 0$.
\end{itemize}
This harmonic mean has the following essential properties:
if $e_{K} \ll e_{K^*}$, then $\tilde e_{K}\approx
e_{K}$, whereas if $e_{K} \approx e_{K^*}$, then 
$\tilde e_{K}\approx \frac12 e_{K}$. {This means that
$\tilde e_{K}$ penalizes the lack of success in reducing the
error from $K^*$ to $K$ up to a factor $\frac12$, provided
$e_K=e_{K^*}$, and always
$\frac12\le \frac{\tilde e_{K}}{e_{K}} <1$.} 


The practical method consists of applying a {\it greedy}
algorithm based on $\{\tilde e_{K}\}_{{K}\in\cK}$:
given an $h$-partition $\cK_N$, with $\#\cK_N = N$, construct $\cK_{N+1}$ by
bisecting an element domain $K\in\cK$ with largest $\tilde e_{K}$.
It is worth stressing that if lack of error reduction persists,
then the modified error functional $\tilde e_{K}$ diminishes
exponentially and
forces the greedy algorithm to start refining somewhere else.  

For $e_K$ being the squared {$L^2$-error} in the best polynomial
approximation on $K$ of a function $v$, this may happen when $v$
 has local but strong singularity.
The simple,
but astute idea to operate on the modified error functionals is responsible alone for the following key result.

\begin{theorem}[instance optimality of $h$-trees \cite{BiDV04}]\label{T:h-tree}
Let the master tree $\mathfrak{K}$ have one single root.
The sequence of $h$-partitions $(\cK_N)_{N \in \mathbb{N}}$ given by the greedy algorithm
based on 
$(\tilde e_{K})_{K\in\cK}$ provides near-best $h$-adaptive tree
approximations  in the sense that
\[
\err_{\cK_N} \le \frac{N}{N-n+1} \sigma_{n}
\quad {\forall n\le N.}
\]
The complexity  for obtaining $\cK_N$ is $\cO(N)$.
\end{theorem}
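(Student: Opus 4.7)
The plan is to establish the near-best bound by a comparison argument between the greedy tree $\cT_N$ (whose leaves form $\cK_N$) and an arbitrary competing subtree $\cT^*$ with $\#\cL(\cT^*) = n$ realizing (or witnessing) $\sigma_n$. First, I would split the leaves of $\cK_N$ into two sets: $\cK_N^{\text{in}}$, those that lie in $\cT^*$ (possibly as leaves, possibly as interior nodes of $\cT^*$), and $\cK_N^{\text{out}}$, those that are strict descendants of leaves of $\cT^*$. Subadditivity of $e_K$ walked upward on each branch of $\cT^*$ gives immediately
\[
\sum_{K \in \cK_N^{\text{in}}} e_K \le \sigma_n,
\]
so the entire task reduces to controlling $\sum_{K \in \cK_N^{\text{out}}} e_K$ by $\frac{n-1}{N-n+1}\sigma_n$.

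The crux is a path-summability property of $\tilde e_K$: using the harmonic-mean identity $\frac{1}{\tilde e_K} = \frac{1}{e_K} + \frac{1}{e_{K^*}}$ and a telescoping argument, one proves that for any chain of ancestors $K \subsetneq K^* \subsetneq \cdots \subsetneq K_0$ (root)
\[
\sum_{K' \supseteq K} \tilde e_{K'}^{\,-1}\, \text{ }\text{behaves subadditively,}
\]
which yields the key estimate $\sum_{K' \in \cP(K)} \tilde e_{K'} \lesssim e_K$ restricted appropriately, and in particular the bound $e_K \le 2\,\tilde e_K$ together with the fact that each leaf $K \in \cK_N^{\text{out}}$ has an ancestor that was bisected late. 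This is the main obstacle, since the harmonic mean relates $\tilde e_K$ to its parent but not directly to its descendants; the right accounting is to associate each leaf in $\cK_N^{\text{out}}$ with the step of the greedy algorithm that created it, and sum over those $N - n + 1$ steps.

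Combined with the greedy property --- at the step when an element $K$ is selected for bisection it has the maximal $\tilde e_{K}$ among all current leaves --- one obtains that the values $\tilde e_{K^{(i)}}$ of selected elements are nonincreasing, and each leaf in $\cK_N^{\text{out}}$ has a $\tilde e$-value bounded by these. An averaging/pigeonhole step over the at least $N - n + 1$ greedy bisections that occur \emph{after} the tree has already overlapped $\cT^*$, together with the two-sided bound $\tfrac12 e_K \le \tilde e_K \le e_K$, then delivers
\[
\sum_{K \in \cK_N^{\text{out}}} e_K \;\le\; \frac{n-1}{N-n+1}\,\sigma_n,
\]
so that adding the contribution from $\cK_N^{\text{in}}$ yields $\err_{\cK_N} \le \frac{N}{N-n+1}\sigma_n$.

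For the complexity claim, I would organize the data as follows: store the current leaves of $\cT_N$ in a max-heap keyed by $\tilde e_K$, so that extracting the maximum and inserting the two children each take $\cO(\log N)$. Maintaining $\tilde e_K$ on insertion requires only the parent's $e_{K^*}$ and the new $e_K$, both local. Binev's observation is that one can amortize away the logarithmic factor by grouping elements into buckets of geometrically decreasing $\tilde e$-values, so that each bisection costs $\cO(1)$ amortized; summing over $N$ bisections gives the $\cO(N)$ total complexity. The single-root hypothesis is used only to guarantee that the greedy algorithm has a well-defined starting point; the multi-root case is handled by Remark \ref{multiple-roots} (invoked below).
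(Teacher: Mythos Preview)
First, note that the paper does not supply its own proof of this theorem: it is quoted from Binev--DeVore \cite{BiDV04} and used as a black box. So there is no ``paper's proof'' to compare against; the relevant comparison is with the original argument in \cite{BiDV04}.

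Your decomposition has the roles of the two pieces reversed, and the first step is not justified. Subadditivity says $e_{K'}+e_{K''}\le e_K$ for children $K',K''$ of $K$; walking \emph{down} the tree decreases the error. Hence for a leaf $K$ of $\cT_N$ that is an \emph{interior} node of $\cT^*$ (so $K\in\cK_N^{\text{in}}$ in your notation), subadditivity gives
\[
e_K \;\ge\; \sum_{L\in\cL(\cT^*),\,L\subset K} e_L,
\]
which is the wrong direction for your claimed bound $\sum_{\cK_N^{\text{in}}} e_K\le\sigma_n$. In fact the easy piece is $\cK_N^{\text{out}}$: for each leaf $L$ of $\cT^*$ that has been further refined in $\cT_N$, subadditivity immediately yields $\sum_{K\in\cL(\cT_N),\,K\subset L} e_K\le e_L$, so $\sum_{\cK_N^{\text{out}}} e_K$ is controlled by part of $\sigma_n$ with no work. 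The genuinely hard piece is the set of leaves of $\cT_N$ that are \emph{coarser} than $\cT^*$ (interior nodes of $\cT^*$), and it is precisely there that the modified errors $\tilde e_K$ and the greedy selection property are needed.

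The ``path-summability'' paragraph is also off target. With the recursive definition $\tilde e_K^{-1}=e_K^{-1}+\tilde e_{K^*}^{-1}$ used in \cite{BiDV04} one gets the telescoping identity $\tilde e_K^{-1}=\sum_{K'\supseteq K} e_{K'}^{-1}$ and the monotonicity $\tilde e_{K'}\le\tilde e_K$ along any root-to-leaf path; the displayed formula $\sum_{K'\in\cP(K)}\tilde e_{K'}\lesssim e_K$ is not what this yields, and the two-sided bound $\tfrac12 e_K\le\tilde e_K\le e_K$ (which holds only for the non-recursive variant stated in this survey) is not by itself enough to run the comparison. The actual Binev--DeVore counting argument bounds, for each ``coarse'' leaf $K$ of $\cT_N$, the quantity $e_K$ by a sum of selected $\tilde e$-values along a path, uses that there are at least $N-n+1$ greedy selections whose $\tilde e$-values dominate those on the coarse leaves, and then invokes the monotone decrease of the greedy sequence; the exact bookkeeping is more delicate than the averaging step you sketch. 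Your complexity remarks are broadly in the right spirit.
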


We can interpret Theorem \ref{T:h-tree} as follows:
given $N$ let $n = \lceil\frac{N}{2}\rceil$ be the ceiling of $N/2$, whence 
$N-n+1\ge N/2$ and
\begin{equation}\label{h-tree-est}
\err_{\cK_N} \le 2 \sigma_{\lceil\frac{N}{2}\rceil}.
\end{equation}
%

\subsection{$hp$-Adaptive Tree Approximation 
}\label{S:hp-tree}
In this subsection, we return to $hp$-approximations.
An element $D$ is a pair $(K,d)=(K_D,d_D)$, with $K$ being the element domain, and $d$ an integer.
The local error functional $e_D \geq 0$ is required to satisfy \eqref{1}, i.e.,
$e_{K',d}+e_{K'',d} \leq e_{K,d}$ when $K', K''$  are the children of $K$,  and  
$e_{K,d'} \leq e_{K,d}$ when $d' \geq d$.
The corresponding global $hp$-error  functional reads as
$$
E_\cD=\sum_{D \in \cD} e_D \quad \forall \cD \in \mathbb{D}.
$$
For $N \in \mathbb{N}$, we set
$$
\sigma_N:=\inf_{\# \cD \leq N} E_{\cD}
$$
where $\# \cD=\sum_{D \in \cD} d_D$.

In our applications, $d_D$ is proportional to the dimension of the polynomial approximation space that is applied on $K_D$ so that
$\# \cD$ is proportional to the dimension of the global $hp$-finite element space.
More precisely, given $d$, we take $p=p(d)$ as the largest integer for which 
\begin{equation}\label{relation-pd}
\text{dim} \, {\mathbb P}_{p-1}(K)={n+p-1 \choose p-1} \leq d,
\end{equation}
and corresponding to $D=(K,d)$, we choose $ {\mathbb P}_{p(d)}(K)$ as approximation space. 
Consequently, for $n>1$, $e_{K,d+1}=e_{K,d}$ whenever $p(d+1)=p(d)$.

We describe an algorithm, designed by Binev \cite{Bi:13,Bi:14}, that finds a {\it near-best $hp$-partition}. It
 builds two
trees: a {\it ghost $h$-tree $\cT$}, 
similar to that in Sect. \ref{S:h-tree}
but with degree dependent error and modified error functionals,
and a {\it subordinate $hp$-tree $\cP$}. The second tree is obtained
by trimming the first one and increasing $d$
as described in the sequel. 


Let $\cK \in \mathbb{K}$, and let $\cT$ denote its corresponding tree.
For any $K\in\cT$, we denote by
$\cT(K)$ the {\it subtree} of $\cT$ emanating from $K$, and let
$d(K, \cT)$ be the number of leaves of $\cT(K)$, i.e.
\begin{equation}\label{dim-K}
d(K,\cT) = \# \cL(\cT(K)).
\end{equation}
%
%
%
%
%

The tree-dependent {\it local $hp$-error functionals $e_K(\cT)$} are defined recursively starting from the leaves and proceeding upwards as follows:
%
\begin{enumerate}[$\quad\bullet$]
\item
$e_K(\cT):= e_{K,1}$ provided $K\in\cL(\cT)$,

\item
{$e_K(\cT):= \min\{ e_{K'}(\cT) + e_{K''}(\cT), e_{K, d(K,\cT)} \}$} otherwise,
\end{enumerate}
where $K',K''\in\cT$ are the children of $K$. This local functional 
carries the information whether it is preferable to {enrich the
space (increase $d$) or refine the element (decrease $h$)}
to reduce the current error in $K$.
%
The subordinate $hp$-tree $\cP$ is obtained from $\cT$ 
by eliminating {the subtree $\cT(K)$ of a node $K\in\cT$ whenever} 
\[
e_K(\cT)= e_{K, d(K,\cT)}.
\]
%
This procedure is 
depicted  in Figure \ref{F:trees}.
%
  
  \begin{figure}[h!]\label{F:trees}
  \begin{center}
  \includegraphics[width=4in]{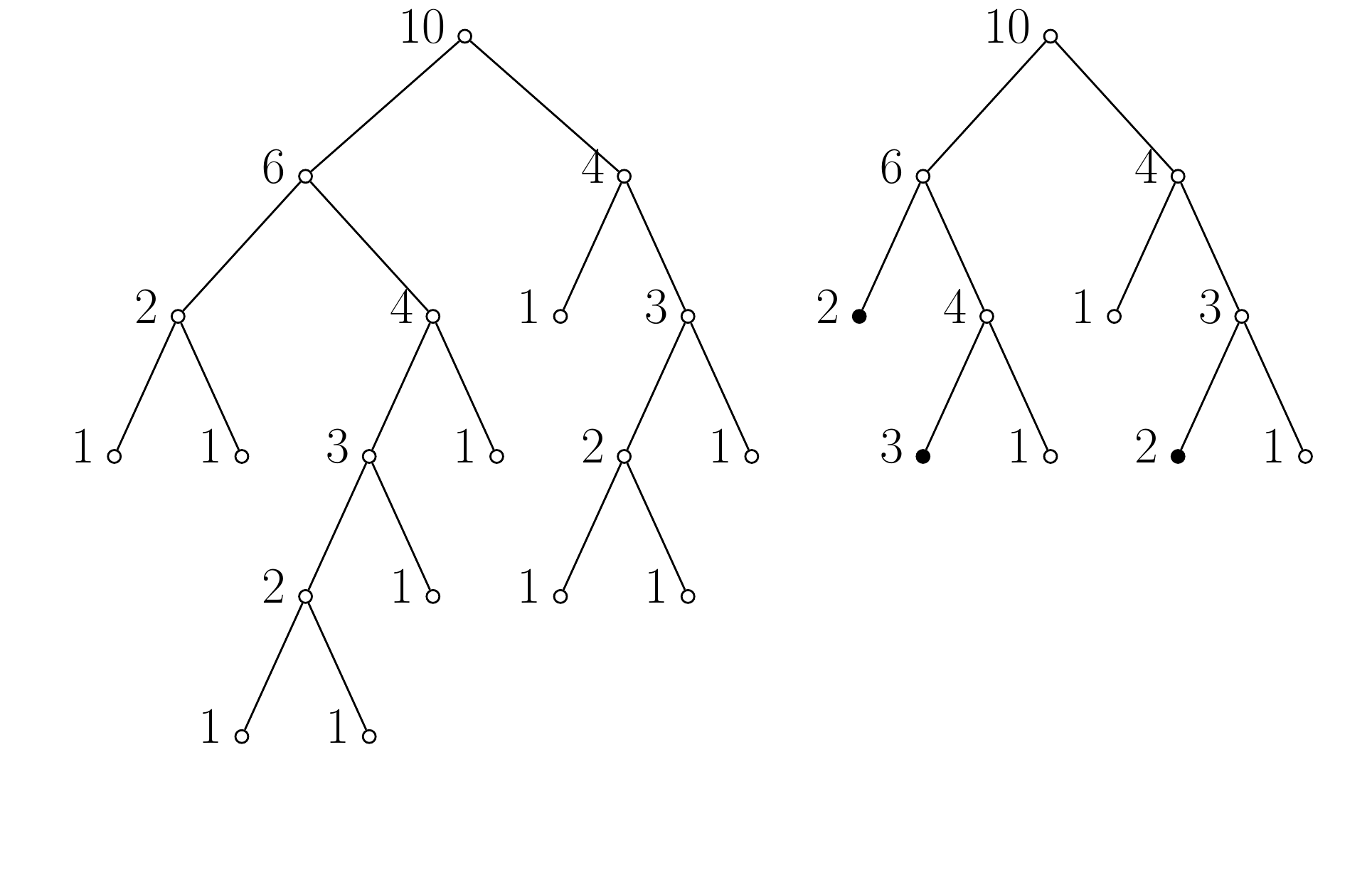}
    \end{center}

\vskip-0.5cm
\caption{\small Ghost $h$-tree $\cT$ (left) with 10 leaves ($\#\cL(\cT)=10$). The label of each node $K$ is $d(K,\cT)$. Subordinate
$hp$-tree $\cP$ (right) resulting from $\cT$ upon trimming 3 subtrees
and raising the values of $d$ of the interior nodes of $\cT$, now
leaves of $\cP$, from $1$ to $2, 3$, and $2$ respectively.
}
\end{figure}

The $hp$-tree $\cP$ gives rise to an $hp$-partition $\cD$, namely the
collection of $hp$-elements $D=(K,d)$ with $K$ a leaf of $\cP$
and $d = d(K,\cT)$.
We have that $\# \cD=\#\cK$, and $\cD$ minimizes $E_{\wt{\cD}}$ over
all $\wt{\cD} \in \mathbb{D}$ with $\cK(\wt{\cD}) \leq \cK$ and 
$d_D \le d(K_D,\cT)$ for all $D\in\wt\cD$,
whence $\# \wt{\cD} \leq \# \cK$.

This describes the trimming of the $h$-tree $\cT$, but not how to
increase the total cardinality of $\cT$. To grow $\cT$, P. Binev uses
a modified local $hp$-error functional and a greedy algorithm that
selects the leaf of $\cT$ that would lead to the largest reduction of
the $hp$-error in $\cP$. We refer to \cite{Bi:14} for the construction of 
the full algorithm for $hp$-adaptive approximation.

\begin{theorem}[instance optimality of $hp$-tree \cite{Bi:13,Bi:14}]\label{T:hp-tree}
Let the master tree $\mathfrak{K}$ have one single root.
For all $N\in\mathbb{N}$, the algorithm sketched above
constructs an $hp$-tree $\cP_N$ subordinate to a ghost $h$-tree $\cT_N$
such that the resulting $hp$-partition $\cD_N$ has cardinality
$\#\cD_N = N$ and global $hp$-error functional
\[
\err_{\cD_N} \le \frac{2N}{N-n+1} \sigma_{n}
\quad {\forall n\le N.}
\]
In addition, the cost of the algorithm for
obtaining $\cD_N$ is bounded by $\cO\big(\sum_{K\in\cT_N} d(K,\cT_N)\big)$, and
varies from $\cO(N\log N)$ for well balanced trees to $\cO(N^2)$ for
highly unbalanced trees.
\end{theorem}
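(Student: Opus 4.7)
The plan is to adapt the strategy underlying Theorem~\ref{T:h-tree} to the tree-dependent $hp$-error functional $e_K(\cT)$, so that the greedy algorithm growing the ghost $h$-tree $\cT_N$ produces an instance-optimal $hp$-tree.

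First I would verify the cardinality statement. Since each leaf $K$ of the subordinate tree $\cP_N$ carries the integer $d(K,\cT_N)=\#\cL(\cT_N(K))$ and the subtrees $\{\cT_N(K)\}_{K\in\cL(\cP_N)}$ partition $\cL(\cT_N)$, one immediately gets
\[
\#\cD_N=\sum_{K\in\cL(\cP_N)}d(K,\cT_N)=\#\cL(\cT_N)=N,
\]
the last equality being the stopping rule of the greedy procedure. In parallel, the trimming recursion $e_K(\cT)=\min\{e_{K'}(\cT)+e_{K''}(\cT),\,e_{K,d(K,\cT)}\}$ collapses upward so that $\err_{\cD_N}=e_{K_0}(\cT_N)$, where $K_0$ is the root. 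The problem thus reduces to bounding $e_{K_0}(\cT_N)$ in terms of $\sigma_n$.

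The core step is to mimic the Binev--DeVore analysis of Theorem~\ref{T:h-tree}, introducing a modified tree-dependent local error $\tilde e_K(\cT)$ built from $e_K(\cT)$ by the same harmonic-mean rule as in Sect.~\ref{S:h-tree}; the greedy algorithm then bisects the leaf of the current $\cT$ with largest $\tilde e_K(\cT)$. To obtain the competing bound, I would fix $n\le N$, pick $\cD^*$ with $\#\cD^*=n$ and $\err_{\cD^*}=\sigma_n$, and associate to it a ghost tree $\cT^*$ whose leaves are those of $\cD^*$ with the corresponding values $d_D$; by the very definition of $e_K(\cT^*)$ as a minimum, one then has $\sum_{K\in\cL(\cT^*)} e_K(\cT^*)\le \sigma_n$. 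The combinatorial heart of the argument is to compare $\tilde e_K$-values along matched paths in $\cT_N$ and $\cT^*$: the bound $\tfrac12 e_K(\cT)\le \tilde e_K(\cT)\le e_K(\cT)$ and the greedy ordering yield the $\tfrac{N}{N-n+1}$ factor already present in Theorem~\ref{T:h-tree}, and the extra factor~$2$ comes from converting $\tilde e_K$ back to $e_K$ at the end.

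The main obstacle is that $e_K(\cT)$ and $\tilde e_K(\cT)$ are not static: every bisection changes $d(K,\cT)$ on the path from the new leaf up to the root, and hence changes the error values at those nodes. To inherit the Binev--DeVore telescoping arguments, I must show that these updates are \emph{monotone}, i.e.\ each refinement only decreases (or preserves) $e_K(\cT)$ at every ancestor; this monotonicity follows from (\ref{1}) applied at each updated ancestor node, combined with the $\min$ in the definition of $e_K(\cT)$. For the complexity, each greedy step costs the update of $e_K(\cT)$ and $\tilde e_K(\cT)$ along the current leaf-to-root path, whose length equals the depth of that leaf; summing over all $N$ steps gives the total path length of $\cT_N$, which equals $\sum_{K\in\cT_N} d(K,\cT_N)$ and scales as $\cO(N\log N)$ when $\cT_N$ is well balanced and as $\cO(N^2)$ when it degenerates into a long chain.
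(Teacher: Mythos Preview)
The paper does not give its own proof of this theorem: it merely sketches Binev's algorithm and cites \cite{Bi:13,Bi:14} for the full construction of the modified $hp$-error functional and for the proof of instance optimality. So there is no paper proof to compare against; I can only comment on whether your outline is a viable route to the result.

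Your cardinality and cost discussions are fine, and the reduction $\err_{\cD_N}=e_{K_0}(\cT_N)$ is correct. The substantive gap is in the core comparison argument. You propose to define $\tilde e_K(\cT)$ by ``the same harmonic-mean rule'' applied to $e_K(\cT)$, and then claim that the Binev--DeVore telescoping yields the factor $\frac{N}{N-n+1}$ while ``the extra factor~$2$ comes from converting $\tilde e_K$ back to $e_K$ at the end.'' This explanation cannot be right: in the $h$-setting one has exactly the same two-sided bound between $e_K$ and $\tilde e_K$, yet Theorem~\ref{T:h-tree} carries \emph{no} factor~$2$. The additional factor in the $hp$ case is not a trivial conversion loss; it arises from the interaction between the greedy growth of $\cT$ and the fact that $e_K(\cT)$, unlike the static $e_K$ of the $h$-case, changes along the whole leaf-to-root path at every bisection. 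Your monotonicity observation is necessary, but it is not by itself enough to recover the Binev--DeVore counting: you also need to control how the modified errors at \emph{interior} nodes evolve under these updates, and it is precisely this step (absent in the $h$-proof) that costs the factor~$2$ in Binev's analysis. Without specifying the modified functional precisely and carrying out that bookkeeping, the argument remains a plausibility sketch rather than a proof.

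A second, smaller issue: your competitor tree $\cT^*$ is underdetermined. Given $\cD^*=\{(K_i,d_i)\}$ with $\sum d_i=n$, you need an $h$-tree with exactly $d_i$ leaves below each $K_i$ so that $d(K_i,\cT^*)=d_i$; you should say how you build it and why the particular choice of subtree below $K_i$ is irrelevant for the bound $e_{K_i}(\cT^*)\le e_{K_i,d_i}$ (it is, by the $\min$ in the recursion, but this should be stated).
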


Binev's algorithm gives a routine \Hpnearbest that satisfies the assumptions made in Subsect.~\ref{S:2.2} for any $B>1$ and $b=\sqrt{\frac{1}{2}(1-\frac{1}{B})}$:

\begin{corollary} \label{corol20} Let $B>1$. Given $\eps>0$, let $\cD \in \mathbb{D}$ be the first partition in the sequence produced by Binev's algorithm for which $E_\cD^{\frac{1}{2}} \leq \eps$. Then $\# \cD \leq B \min\{\# \hat{\cD}\colon \hat{\cD} \in \mathbb{D},\, \err_{\hat{\cD}}^{\frac{1}{2}} \leq \sqrt{\frac{1}{2}(1-\frac{1}{B})}\,\eps\}$.
\end{corollary}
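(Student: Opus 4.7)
The plan is to apply Theorem~\ref{T:hp-tree} directly to the partition \emph{immediately preceding} $\cD$ in Binev's sequence, and exploit the minimality of the index $N:=\#\cD$. Since $\cD$ is the \emph{first} element of the sequence with $\err_\cD \le \eps^2$, the previous partition $\cD_{N-1}$ must satisfy $\err_{\cD_{N-1}} > \eps^2$. This is the only place where the stopping rule enters, and it is what lets us convert an inequality about the produced error into an inequality about cardinalities.

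Fix any competitor $\hat\cD \in \mathbb{D}$ with $\err_{\hat\cD}^{1/2} \le b\eps$, and set $n:=\#\hat\cD$. Two cases are handled separately. If $n \ge N$, then trivially $\#\cD = N \le n \le B n$ since $B>1$. If instead $n \le N-1$, I would invoke Theorem~\ref{T:hp-tree} for the partition $\cD_{N-1}$ of cardinality $N-1$ at the index $n$, obtaining
\[
\eps^2 \;<\; \err_{\cD_{N-1}} \;\le\; \frac{2(N-1)}{N-n}\,\sigma_n \;\le\; \frac{2(N-1)}{N-n}\,\err_{\hat\cD} \;\le\; \frac{(N-1)(1-\frac{1}{B})}{N-n}\,\eps^2,
\]
where the last inequality uses $\err_{\hat\cD} \le b^2\eps^2 = \tfrac{1}{2}(1-\tfrac{1}{B})\eps^2$.

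Dividing by $\eps^2$ and rearranging gives $N-n < (N-1)(1-\tfrac{1}{B})$, i.e.\ $n > 1 + (N-1)/B$, which is equivalent to $N < (n-1)B + 1$. Since $B>1$ we have $(n-1)B + 1 \le nB$, so $N \le n B$, that is $\#\cD \le B\#\hat\cD$. Taking the infimum over all admissible $\hat\cD$ completes the proof.

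The argument is essentially an algebraic reformulation of Binev's instance-optimality bound, so there is no real obstacle; the only subtlety worth watching is ensuring the case distinction $n\ge N$ versus $n\le N-1$ is treated explicitly, and that the strict inequality $\eps^2 < \err_{\cD_{N-1}}$ from the stopping rule is used to produce the strict inequality needed to pass from $N < nB$ to $N \le nB$.
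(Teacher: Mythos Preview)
Your proof is correct and follows essentially the same route as the paper: both apply Theorem~\ref{T:hp-tree} to the partition $\cD_{N-1}$ and exploit that $\err_{\cD_{N-1}}>\eps^2$. The paper argues by contradiction (assume $N>Bn$ and deduce $\err_{\cD_{N-1}}\le\eps^2$), while you argue directly with a case split $n\ge N$ versus $n\le N-1$; the algebra is the same rearrangement of the same inequality. Your Case~1 implicitly covers the paper's separate treatment of $N=1$, since $n\ge 1=N$ then holds automatically. The closing remark about needing strictness ``to pass from $N<nB$ to $N\le nB$'' is unnecessary, as the implication is trivial; what the strict inequality from the stopping rule actually buys you is the strict $N-n<(N-1)(1-\tfrac1B)$, which is what drives the chain.
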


\begin{proof} Let $\cD=\cD_N$, i.e.,  $\cD$ is  the $N$th partition in the sequence, and $\#\cD=N$. For $N=1$ the statement is true, so let $N>1$.
Suppose there exists a $\hat{\cD} \in \mathbb{D}$ with $\err_{\hat{\cD}}^{\frac{1}{2}} \leq \sqrt{\frac{1}{2}(1-\frac{1}{B})}\,\eps$ and $N>B \# \hat{\cD}$.
Then, with $n:=\# \hat{\cD}$, we have $\err_{\cD_{N-1}} \leq \frac{2(N-1)}{N-1-n+1} \sigma_n \leq \frac{2(N-1)}{N-1-n+1} \err_{\hat{\cD}} \leq  \frac{2(N-1)}{N-1-n+1} \frac{1}{2}(1-\frac{1}{B}) \eps^2$.
From $\frac{2(N-1)}{N-1-n+1} \frac{1}{2}(1-\frac{1}{B}) \leq 1$, being a consequence of $N \geq B n$ and $B \geq 1$, we get a contradiction with $\cD$ being the first one with $E_{\cD}^{\frac{1}{2}} \leq \eps$.
\end{proof}

\begin{remark} \label{multiple-roots}
In order to deal with the case that the master tree $\mathfrak{K}$ has $R>1$ roots, the following approach can be followed.

We unify the $R$ roots pairwise creating new element domains, each one being  the union of two roots.  When $R>2$, this process has to be repeated until only one element domain remains, which will the new, single root.
Obviously, this applies only when $R$ is a power of 2. In the other case, we have to introduce at most $\lceil \log_2 R \rceil -1$ (empty) virtual element domains (and, formally, infinite binary trees of virtual element domains rooted at them). 
We denote the extended, single rooted master tree by $\widehat{\mathfrak{K}}$.

Next, we extend the definition of $e_{K,d}$ as follows. At first  we give a meaning to $e_{K,0}$ 
for each element domain $K \in \mathfrak{K}$. Typically, for $d \in \mathbb{N}$, $e_{K,d}$ has the meaning of the squared error in the approximation of a quantity from a space of dimension $d$. Then a natural definition of $e_{K,0}$ is that of the squared error in the zero approximation.

Considering now the elements in $\widehat{\mathfrak{K}}\setminus {\mathfrak{K}}$, i.e., the newly created element domains, we distinguish between virtual and non-virtual element domains. For each virtual element domain, we set $e_{K,d}:=0$ for any $d \in \mathbb{N}\cup \{0\}$. Finally, for each newly created non-virtual element domain $K$, being the union of $K'$ and $K''$ (one of them possibly being a virtual element domain), for $d \in \mathbb{N} \cup \{0\}$ recursively we define 
$$
e_{K,d}:=\min_{\{d',d'' \in \mathbb{N} \cup \{0\}\colon d'+d''\leq d\}} e_{K',d'}+e_{K'',d''}.
$$
Note that in the minimum at the right hand side $d'$ or $d''$ can or has to be zero.
In that case, $e_{K',d'}+e_{K'',d''}$ has the interpretation of the squared error in an approximation on $K$ that is zero on $K'$ or $K''$.

It is easily checked that the error functional $e_{K,d}$ for $(K,d) \in \widehat{\mathfrak{K}} \times \mathbb{N}$ satisfies \eqref{1}, and Theorem~\ref{T:hp-tree} and Corollary~\ref{corol20} apply.
\end{remark}

We close the discussion of the module \Hpnearbest with the observation that
in dimensions $n>1$, Binev's algorithm produces $hp$-partitions that are generally non-conforming. Since conformity is required by the module \Reduce, a post-processing step which makes the output partition conforming is required. 
The implementation of such a procedure in dimension 2, and the analysis of its complexity, will be {discussed} in Sect. \ref{2D-conformity}.

\section{A self-adjoint elliptic problem in 1D}\label{S:1d}

In this section we apply the abstract framework introduced in Sect. \ref{S1} to a one-dimensional self-adjoint elliptic problem.

\subsection{The continuous problem and its $hp$ discretization}
Let $\Omega:=(0,1)$. Given $f_1,f_2\in L^2(\Omega)$ and $\nu,\sigma\in L^\infty(\Omega)$ 
satisfying 
\begin{equation}\label{hyp:data}
0< \nu_*\leq \nu\leq \nu^*<\infty  \qquad \text{and} \qquad 0 \leq \sigma\leq \sigma^*<\infty \;
\end{equation}
for some constants $\nu_*,\nu^*$ and $\sigma^*$, we consider the following model elliptic problem 
\begin{equation} \label{eq:two-point}
\begin{split}
 -&(\nu u^{\prime})^{\prime} +  \sigma u= f_1 + f_2^{\prime}\quad \text{in~}\Omega\,,
\\
& u(0)=u(1)=0\;,
\end{split}
\end{equation}
which can be written as in \eqref{eq:main-problem} setting $\lambda=(\nu,\sigma)$, $f=f_1+f_2^\prime \in H^{-1}(\Omega)$ and 
$$A_\lambda u:= -(\nu u^{\prime})^{\prime} +  \sigma u \in \mathcal{L}(H^1_0(\Omega), H^{-1}(\Omega)).$$
Equivalently, $u\in H^1_0(\Omega)=:V$, equipped with the norm $\vert \cdot \vert_{H^1(\Omega)}$, satisfies 
\begin{equation}\label{pb:weak}
a_\lambda(u,v) = \langle f , v \rangle  \qquad \forall v\in  H^1_0(\Omega),
\end{equation}
where the bilinear form $a_\lambda:H^1_0(\Omega)\times H^1_0(\Omega) \to \mathbb {R}$ and the linear form $f:H^1_0(\Omega) \to \mathbb{R}$ are defined as 
$$
a_\lambda(u,v):=\int_\Omega (\nu u^\prime v^\prime + \sigma u v) \, dx \;, \qquad \langle f , v \rangle=\int_\Omega (f_1v - f_2v^\prime) \, dx \;.
$$

 
In view of the approximation of the operator $A_\lambda$ we introduce the metric space 
$$\bar\Lambda:=\{ \bar\lambda=(\bar\nu,\bar\sigma)\in L^\infty(\Omega) \times L^\infty(\Omega): ~~
\bar\nu_*\leq \bar\nu\leq \bar\nu^*,~ -\bar\sigma_*\leq \bar\sigma\leq \bar\sigma^*  \} $$
where $\bar\nu_*, \bar\nu^*, \bar\sigma_*, \bar\sigma^*$ are positive constants defined as follows. 
Suppose that the pair $(\bar\nu,\bar\sigma)$ approximates $(\nu,\sigma)$ with error
\begin{equation}\label{eq:pert-bound}
\|\nu - \bar\nu\|_{L^\infty(\Omega)}\leq \frac{\nu_*}{2}, \qquad \|\sigma - \bar\sigma\|_{L^\infty(\Omega)}\leq \frac{\nu_*}{2};
\end{equation}
then it is easily seen that 
$$ 
\bar\nu_*:=\frac{\nu_*}{2}\leq \bar\nu\leq \nu^*+\frac{\nu_*}{2}=:\bar\nu^*,
\qquad -\bar\sigma_*:=-\frac{\nu_*}{2}\leq \bar\sigma\leq \sigma^*+\frac{\nu_*}{2}=:\bar\sigma^*.
$$
Furthermore, using the Poincar\'e inequality $\|v\|^2_{L^2(\Omega)}\leq \frac{1}{{2}}\vert v \vert^2_{H^1(\Omega)}$ we have
$$
(\bar\nu_*-\frac{1}{2} \bar\sigma_*)\vert v \vert^2_{H^1(\Omega)}
\leq
a_{\bar\lambda}(v,v)
\leq (\bar\nu^*+\frac1 2 \bar\sigma^*) \vert v \vert^2_{H^1(\Omega)}
$$
for all $v\in H^1_0(\Omega)$, $\bar\lambda\in \bar\Lambda$. We conclude that setting 
$\alpha_*:=\bar\nu_*-\frac{1}{2} \bar\sigma_*=\frac 1 4 \nu_*$ and 
$\alpha^*= \bar\nu^*+\frac1 2 \bar\sigma^*= \nu^* + \frac 1 2 \sigma^* + \frac 3 4 \nu_*$
it holds 
\begin{equation} \label{eq:equiv}
\sqrt{\alpha_*} \vert v \vert_{H^1(\Omega)}\leq \tvert v\tvert_{\bar\lambda} \leq \sqrt{\alpha^*} \vert v \vert_{H^1(\Omega)}\qquad \forall v\in H^1_0(\Omega), ~\forall \bar\lambda\in \bar\Lambda
\end{equation}
with $\tvert v\tvert^2_{\bar\lambda}:=a_{\bar\lambda}(v,v)$. 
The space $\Lambda$ will be a subset of $\bar\Lambda$ containing the coefficients $\lambda$ of the problem \eqref{eq:main-problem}; it will be defined later on.

Concerning the definition of the space $F$ containing the right-hand side,
we write $f=(f_1,f_2)\in L^2(\Omega)\times L^2(\Omega)=:F$ (note that different couples in $F$ may give rise to the same $f\in H^{-1}(\Omega)$).

We now discuss the $hp$-discretization of \eqref{eq:two-point}. To this end, we specify that the binary master tree 
$\mathfrak{K}$ is obtained from an initial partition, called the `root partition', by 
applying successive dyadic subdivisions to all its elements. Later, cf. Property \ref{property:bound}, it will be needed to assume that this initial partition is sufficiently fine. Furthermore, with reference to the abstract notation of Section \ref{S1}, given any  $(K,d) \in \mathfrak{K}\times \mathbb{N}$ we have {$p(d)=d$. In consideration} of this simple relation, throughout this section 
we will use the notation $(K,p)$ instead of $(K,d)$, i.e., the second parameter of the couple will identify a polynomial degree on the element $K$.  
We set

\begin{eqnarray}
&&V_{K,p}=\mathbb{P}_p(K), \qquad F_{K,p}=\mathbb{P}_{p-1}(K)\times \mathbb{P}_p(K),\nonumber\\
&&\Lambda_{K,p}=\{\bar\lambda=(\bar\nu,\bar\sigma)\in  \mathbb{P}_{p+1}(K)\times 
\mathbb{P}_{p+1}(K):~~
\bar\nu_*\leq \bar\nu\leq \bar\nu^*,~ -\bar\sigma_*\leq \bar\sigma\leq \bar\sigma^*  \}.\nonumber 
\end{eqnarray}
Thus $${{V}}^c_\cD = \{ v \in H^1_0(\Omega) : v_{|K_D} \in \mathbb{P}_{p_D}(K_D) \ \forall D \in {\cD} \}$$ will be the discretization space associated with the $hp$-partition ${\mathcal D}$. Furthermore, we have 
$F_{\mathcal{D}} \subset F$ and $\Lambda_{\mathcal{D}} \subset \bar\Lambda$, with $F$ and $\bar\Lambda$ defined above. The difference in polynomial degrees between the various components of the approximation spaces for data is motivated by the need of balancing the different terms entering in the local error estimators, see  \eqref{def:e} below.

At this point,  we have all the ingredients that determine a Galerkin approximation as in \eqref{eq:galerkin}.

\subsection{Computable a posteriori error estimator}


Given data $(f_\cD,\lambda_\cD)\in F_{\mathcal{D}} \times \Lambda_{\mathcal{D}}$, let $u_\cD(f_\cD,\lambda_\cD) \in V_\cD^c$ be the solution of the Galerkin problem \eqref{eq:galerkin} with such data. To it, we associate the residual $r=r(u_{\cD},f_\cD,\lambda_\cD) \in \HmOm$, defined by
\begin{equation}\label{res:def}
\langle r, v \rangle =  \langle { f}_{\cD} , v \rangle - a_{\lambda_\cD}(u_{\cD}(f_\cD,\lambda_\cD), v)
 \qquad \forall v \in \HOm \;,
\end{equation}
and satisfying  $\langle r, v_{\cD}  \rangle =0$  for all $v_{\cD} \in {V}_{\cD}^c$. The dual norm of the residual is a natural a posteriori error estimator, since one has
\begin{equation}\label{eq:apost-estim}
\frac1{\sqrt{\alpha^*}} \, \Vert r \Vert_{\HmOm} \leq  \tvert u(f_\cD,\lambda_\cD)-u_\cD(f_\cD,\lambda_\cD) \tvert_{\lambda_\cD} \leq
\frac1{\sqrt{\alpha_*}} \, \Vert r \Vert_{\HmOm} \;;
\end{equation}
in one dimension, such norm can be expressed in terms of independent contributions coming from the elements $K_D$ of the partition
$\cD$, which are easily and exactly computable if, e.g., the residual is locally polynomial. To see this,  
let us introduce the subspace of $\HOm$ of the piecewise linear functions on $\mathcal{D}$, i.e.,
$$
{{V}}^{L}_{\cD} = \{v \in \HOm \ | \ v_{|K_D} \in \mathbb{P}_1(K_D) \quad \forall  D \in \cD \} \subseteq V_\cD^c
$$
and let us first notice that $\HOm$ admits the orthogonal decomposition (with respect to the inner product associated with the norm $|{\cdot}|_{H^1(\Omega)}$)
$$
\HOm= { V}^{L}_{\cD}  \oplus \bigoplus_{D \in \cD} \HI \;,
$$
where functions in $\HI$ are assumed to be extended by $0$ outside the interval $K_D$;
indeed, for any $v \in V$, we have the orthogonal splitting
$$
v=v_L + \sum_{D \in \cD} v_{K_D} \;, 
$$
where $v_L \in  { V}^{L}_{\cD} $ is the piecewise linear interpolant of $v$ on $\cD$ and $v_{K_D}=(v-v_L)_{|K_D} \in \HI$. 
Recalling that $\langle r, v_L  \rangle =0$  for all $v_L \in {V}_{\cD}^L$, it is easily seen that the following expression holds:
\begin{equation} \label{eq:res-repr}
\Vert r \Vert_{\HmOm}^2 = \sum_{D \in \cD} \|r_{K_D}\|^2_{H^{-1}(K_D)} \;,
\end{equation}
where $r_{K_D}$ denotes the restriction of $r$ to $\HI$.

The computability of the terms on the right-hand side is assured by the following representation: for any $D \in \cD$, one has 
$$\|r_{K_D}\|^2_{H^{-1}(K_D)}=\vert z_{K_D}\vert^2_{H^1(K_D)},
$$
where $ z_{K_D}\in \HI$ satisfies
\begin{equation}\label{eq:def-eK}
( z_{K_D}',v')_{L_2(K_D)}=\langle r_{K_D}, v \rangle \quad\forall v \in H^1_0(K_D).
\end{equation}
Writing $ u_\cD=u_\cD(f_\cD,\lambda_\cD)$ and $K_D=(a,b)$, and noting that, since $f_{2,\cD}$ is a polynomial in $K_D$, 
\begin{equation}\label{eq:res-split}
\langle r_{K_D}, v \rangle=\int_{K_D} \!\!\! \big(f_{1,\cD} + f_{2,\cD}'+(\nu_\cD u_\cD^\prime)^\prime - 
\sigma_\cD u_\cD \big)v \, dx= ( r_{K_D}, v )_{L_2(K_D)},
\end{equation} 
it is easily seen that the solution $ z_{K_D}$ has the following  analytic expression  
\begin{equation}\label{eq:green}
 z_{K_D}(x)=\int_{K_D} G(x,y) r_{K_D}(y) dy \;,
\end{equation}
where $G(x,y):=\left\{
\begin{array}{cc} 
\frac{(a-x)(b-y)}{b-a} & x<y\\
\frac{(a-y)(b-x)}{b-a} & x>y
\end{array}
\right.
$ is the Green's function of our local problem \eqref{eq:def-eK}. 
Thus, the squared norm $\|r_{K_D}\|^2_{H^{-1}(K_D)}$ of the local residual can be explicitly computed, since  $r_{K_D}$ is a polynomial. 

Summarizing, defining for any $D\in\cD$ the local error estimator
\begin{equation} \label{eq:lor-est}
\eta^2_{D,\cD}(u_\cD(f_\cD,\lambda_\cD),f_\cD,\lambda_\cD):= \vert  z_{K_D}\vert^2_{H^1(K_D)}
\end{equation}
and defining the global error estimator as in \eqref{global-error-est}, we have by \eqref{eq:apost-estim}
\begin{equation}\label{eq:2nd-apost-estim}
\begin{split}
\frac1{\sqrt{\alpha^*}} \est_\cD(u_\cD(f_\cD,\lambda_\cD),f_\cD,\lambda_\cD) &\leq
\tvert u(f_\cD, \lambda_\cD)-u_\cD(f_\cD, \lambda_\cD) \tvert_{\lambda_\cD} \\
&\leq
\frac1{\sqrt{\alpha_*}}  \est_\cD(u_\cD(f_\cD,\lambda_\cD),f_\cD,\lambda_\cD),
\end{split}
\end{equation}
which in particular implies the reliability assumption \eqref{23}.

\subsection{The module \Refine}\label{sec:PDE}
Hereafter, we present a realization of the module \Refine, that 
guarantees the discrete efficiency property \eqref{24}, hence the contraction property of {\Reduce}. For every $D\in \cM\subseteq \cD$ the module raises the local polynomial degree to some higher value, whereas for $D\in \cD\setminus \cM$ the local polynomial degree remains unchanged. No $h$-refinement is performed.

To be precise, consider an element $D=(K_D,p_D)\in \cM$. Suppose that the local polynomial degree of the data is related to some $\hat{p}_{D}$, in the sense that $$f_{1,\cD}|_{K_D} \in \mathbb{P}_{\hat{p}_{D}-1}(K_D),\quad f_{2,\cD}|_{K_D} \in \mathbb{P}_{\hat{p}_{D}}(K_D),\quad \nu_\cD|_{K_D}, \ \sigma_\cD|_{K_D} \in \mathbb{P}_{\hat{p}_{D}+1}(K_D).$$ 
Recall that $u_\cD=u_\cD(f_\cD,\lambda_\cD)$ satisfies $u_\cD|_{K_D}\in \mathbb{P}_{p_D}(K_D)$. Then it is easily seen that the residual 
$r=r(u_\cD, f_\cD,\lambda_\cD)$ is such that its restriction $r_{K_D}$ to $K_D$ is a polynomial of degree 
$\hat{p}_D+ p_D+1$, while the function $ z_{K_D}$ defined in \eqref{eq:def-eK} is a polynomial of degree
\begin{equation}\label{eq:def-hatpD}
\bar{p}_D :=\hat{p}_D + p_D + 3\;.
\end{equation}
Therefore, the module \Refine builds  $ \bar \cD=\bar{\cD}(\cM)\in \mathbb{D}^c= \mathbb{D}$ with   $\bar{\cD}(\cM) \geq \cD$  as follows:
$$\bar\cD=\{\bar D\}\quad\text{with~} \bar D= 
\begin{cases}
(K_D,\bar p_D)& \text{for~}D\in \cM\\
D& \text{for~}D\in \cD\setminus\cM.
\end{cases}
$$

In order to prove  \eqref{24}, consider a marked element $D \in \cM$. Setting $\mathbb{P}_{\bar{p}_D}^0(K_D) := \mathbb{P}_{{\bar{p}_D}}(K_D) \cap H^1_0(K_D)$ and recalling that $  z_{K_D}\in \mathbb{P}_{\bar{p}_D}^0(K_D)$ we have 
\begin{equation}
\eta_{D,\cD}(u_\cD,f_\cD,\lambda_\cD) \, = \, |  z_{K_D} |_{H^1(K_D)} 
=\!\!\! \sup_{w \in \mathbb{P}_{\bar{p}_D}^0(K_D)} \!\!\!  \frac{( z_{K_D}', w')_{L^2(K_D)} }{| w |_{H^1(K_D)}}
=  \!\!\!  \sup_{w \in \mathbb{P}_{\bar{p}_D}^0(K_D)} \frac{\langle r_{K_D}, w \rangle }{| w |_{H^1(K_D)}} \;.
\end{equation}
%
%
On the other hand, the Galerkin solution $u_{\bar\cD}=u_{\bar\cD}(f_\cD,\lambda_\cD)$ is such that its residual $\bar r =  r (u_{\bar\cD}{,f_\cD,\lambda_\cD})$ satisfies $\langle \bar{r}_{K_D}, w \rangle=0$ for all $w \in  \mathbb{P}_{\bar{p}_D}^0(K_D)$.
Thus, denoting by $a_{\lambda_\cD,K_D}(\cdot,\cdot)$ the restriction of the form $ a_{{\lambda_\cD}}(\cdot,\cdot)$ to $H^1(K_D)\times H^1(K_D)$,  and setting $\tvert v \tvert^2_{\lambda_\cD, K_D}= a_{{\lambda_\cD,K_D}}(v,v)$, we get
\begin{eqnarray}
\eta_{D,\cD}(u_\cD,f_\cD,\lambda_\cD) &=& \sup_{w \in \mathbb{P}_{\bar{p}_D}^0(K_D)} 
\frac{\langle r_{K_D}-\bar{r}_{K_D}, w \rangle }{| w |_{H^1(K_D)}}\nonumber\\ 
&=& \sup_{w \in \mathbb{P}_{\bar{p}_D}^0(K_D)} 
\frac{a_{\lambda_\cD,K_D}(u_\cD - u_{\bar\cD}, w)}{| w |_{H^1(K_D)}}
\leq \sqrt{\alpha^*} \tvert u_\cD - u_{\bar\cD} \tvert_{\lambda_\cD, K_D}\nonumber \;.
\end{eqnarray}
%
Squaring and summing-up over all $D \in \cM$, we obtain
\begin{equation}\label{discr-eff-2}
 {\est^2_\cD(\cM,u_{\cD},f_\cD,\lambda_\cD)}
  \leq \alpha^* \tvert  u_\cD - u_{\bar\cD} \tvert^2_{\lambda_\cD} \;,
\end{equation}
which immediately implies \eqref{24}.
 \begin{remark}
The choice of the error estimator and the refinement strategy indicated above guarantees that the reliability assumption \eqref{23} and the efficiency assumption \eqref{24} are fulfilled, hence the conclusions of Proposition \ref{prop_reduce} hold true. 
Actually, one can be more precise, since using \eqref{eq:2nd-apost-estim} and \eqref{discr-eff-2} and following the steps of the proof of Proposition 
 \ref{prop_reduce}, we get that the sequence of Galerkin approximations built by a call of \Reduce
 satisfies the contraction property \eqref{contraction} with contraction factor 
 $\kappa= \sqrt{1- \frac{\alpha_*}{\alpha^*}\theta}$. 
\end{remark}

\subsection{Convergence and optimality properties of \HPAFEM}\label{sec:HPAFEM-1D}
In this section we discuss the convergence and optimality properties of our adaptive algorithm \Hpafem in the present one-dimensional setting. To this end, we first specify the abstract functional framework introduced in Sect. \ref{S1}. We already set $V:=H^1_0(\Omega)$ and $F:=L^2(\Omega)\times L^2(\Omega)$. Concerning the space $\Lambda$ containing the coefficients of the operator, we assume 
stronger regularity than just $L^\infty(\Omega)$ in order to guarantee that the piecewise polynomial approximations of the coefficients still define a coercive variational problem. 

To be precise, from now on we assume that $\lambda=(\nu,\sigma)$ belongs to the space 
\begin{equation} \label{eq:lambda}
\Lambda:=\{ \lambda=(\nu,\sigma)\in H^1(\Omega) \times H^1(\Omega): ~~
\nu_*\leq \nu\leq \nu^*,~ 0\leq \sigma\leq \sigma^*  \}.
\end{equation}
Here, in view of \eqref{1}, we choose to work with a smoothness space of Sobolev type with summability index $2$, so that squared best approximation errors are non-increasing under $h$-refinements.
We notice that it would be sufficient to require the coefficients to be piecewise $H^1$ on the initial partition. We decide to work under stronger assumptions just for the sake of simplicity.

We now define the projectors $Q_{K,p}$ introduced in Sect. \ref{def_ass}. To this end, let $\Pi^0_{K,p}\in \mathcal{L}(L^2(K),\mathbb{P}_p(K))$ be the $L^2$-orthogonal projection and  $\Pi^1_{K,p}\in \mathcal{L}(H^1(K),\mathbb{P}_p(K))$ be the $H^1$-type orthogonal projection defined as follows: if $v\in H^1(K)$ with $K=[a,b]$ then 
$$\left( \Pi^1_{K,p} v \right)(x) := c + \int_a^x  \left (\Pi^0_{K,p-1}  v^\prime \right )(t)\ dt$$ 
where the constant  $c$ is such that $\int_K  \Pi^1_{K,p} v \  dx = \int_K v \  dx$.

Then we define $Q_{K,p}\in\mathcal{L}(V\times F\times \Lambda, 
\mathbb{P}_p(K)\times (\mathbb{P}_{p-1}(K)\times \mathbb{P}_p(K))\times (\mathbb{P}_{p+1}(K)\times \mathbb{P}_{p+1}(K))$ by setting 
$$ Q_{K,p} (v,f,\lambda):= (\Pi^1_{K,p} v_{\vert K},\ 
\Pi^0_{K,p-1} f_{1\vert K},\ \Pi^0_{K,p} f_{2\vert K},\  
\Pi^1_{K,p+1} {\nu}_{\vert K} ,\  \Pi^1_{K,p+1} {\sigma}_{\vert K}).$$
At last, we define the local error functionals $e_{K,p}$. We set 
\begin{equation}\label{def:e}
{e}_{K,p}(v,f,\lambda):= | ({\rm I} - \Pi^1_{K,p}) v_{\vert K} |^2_{H^1(K)} +  {\delta}^{-1}\text{osc}_{K,p}^2(f,\lambda)
\end{equation}
where $\delta>0$ is a positive penalization parameter to be chosen later 
and 
\begin{equation} \label{eq:oscillation}
\begin{split}
\text{osc}_{K,p}^2(f,\lambda)&:= \|\frac{h}{p}({\rm I}- \Pi^0_{K,p-1}) f_{1\vert K} \|^2_{L^2 (K)} +
\| ({\rm I}- \Pi^0_{K,p}) f_{2 \vert K} \|^2_{L^2 (K)} \\
 &+\  |({\rm I}- \Pi^1_{K,p+1}) \nu_{\vert K}|^2_{H^1(K)} 
+ | ({\rm I}- \Pi^1_{K,p+1}) \sigma_{\vert K}  |^2_{H^1(K)}
\end{split}
\end{equation}
where $h=\vert K\vert$. Note that the choice of polynomial degrees is such that for smooth data the four addends above scale in the same way with respect to the parameters $h$ and $p$. Furthermore, the data oscillation that appears in \eqref{def:e} is of higher order with respect to the projection error for the function $v$.

It is straightforward to check the validity of \eqref{1}.
We recall  that given a partition $\mathcal{D}\in \mathbb{D}$, we denote by $f_{\mathcal{D}}=(f_{1,\cD},f_{2,\cD})$ and 
$\lambda_{\mathcal D}=(\nu_\cD,\sigma_\cD)$ the piecewise polynomial function obtained by projecting $f$ and $\lambda$, respectively, element by element as indicated above. Note that while $f_\cD \in F_\cD \subset F$, $\lambda_\cD$ need not belong to $\bar{\Lambda}$. Given a partition $\mathcal{D}\in \mathbb{D}$, we will set  $${\rm osc}_\cD^2(f,\lambda):=\sum_{D \in \cD} {\rm osc}_D^2(f,\lambda),$$ where $ {\rm osc}_D^2(f,\lambda)=  {\rm osc}_{K_D,p_D}^2(f,\lambda)$.

The following result provides a uniform bound on the approximation error of the coefficients of the operator, assuring that $\lambda_\cD \in \bar{\Lambda}$.
\begin{property}\label{property:bound}
Let $\hat{\mathcal{D}}$ be the root partition with polynomial degree equal to one on each element domain. 
Assume that  $\cK(\hat{\mathcal{D}})$ is sufficiently fine for the given data $\lambda \in \Lambda$, in the sense that
for each $K\in \mathcal{K}(\hat{\mathcal{D}})$ it holds
$$
 |({\rm I}- \Pi^1_{K,1}) \nu_{\vert K}|_{H^1(K)}\leq \frac{\nu_*}{2},\qquad 
 |({\rm I}- \Pi^1_{K,1}) \sigma_{\vert K}|_{H^1(K)}\leq \frac{\nu_*}{2}.
$$
Then for any $\mathcal{D}\in \mathbb{D}$ we have \eqref{eq:pert-bound}, i.e., 
$$
\| \nu - \nu_{\mathcal{D}}\|_{L^\infty(\Omega)}\leq \frac{\nu_*}{2},
 \qquad
\| \sigma - \sigma_{\mathcal{D}}\|_{L^\infty(\Omega)}\leq \frac{\nu_*}{2}. 
$$
Consequently, $\lambda_\cD \in \Lambda_{\mathcal{D}} \subset \bar\Lambda$.
\end{property}
\begin{proof}
For any $D=(K_D,p_D)$, let $\hat{K}\in \mathcal{K}(\hat{\cD})$ the element of the root partition containing $K_D$. 
Then, we have 
$$ |({\rm I}- \Pi^1_{K_D,p_D+1}) \nu_{\vert{K_D}}|_{H^1(K_D)}\leq  |({\rm I}- \Pi^1_{\hat{K},1}) \nu_{\vert {\hat{K}}}|_{H^1(\hat{K})}\leq \frac{\nu_*}{2}. $$
On the other hand, set $\psi=({\rm I}- \Pi^1_{K_D,p_D+1}) \nu\vert_{K_D}$; recalling that $\psi$ has zero mean-value in $K_D$, it vanishes at some point $x_0 \in K_D$ since it is a continuous function. Writing $\psi(x)=\psi(x_0)+\int_{x_0}^x \psi'(t)dt$ for any $x \in K_D$ yields
$$
|\psi(x)| \leq |x-x_0|^{1/2} \Vert \psi' \Vert_{L^2(K_D)} \leq |K_D|^{1/2} | \psi |_{H^1(K_D)}\;,
$$
whence the result immediately follows after observing $\vert K_D\vert \leq 1$.
\end{proof}

We now focus on the abstract assumptions \eqref{0}-\eqref{3}.
\begin{proposition}\label{prop:assumptions}
In the present setting, assumptions \eqref{2}-\eqref{3} hold true. Furthermore, if $\delta$ is chosen sufficiently small, then \eqref{0} is fulfilled. 
\end{proposition}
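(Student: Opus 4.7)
The plan is to verify \eqref{0}--\eqref{3} in order, treating \eqref{0} as a consequence of the constants obtained in \eqref{2}--\eqref{3}. First I would handle \eqref{3} with $C_2=1$. Write $\err_\cD(w,f,\lambda)=A_\cD(w)+\delta^{-1}\osc^2_\cD(f,\lambda)$, where $A_\cD(w):=\sum_{D\in\cD}|(I-\Pi^1_D)w|^2_{H^1(K_D)}$ depends only on $w$. Since the oscillation term cancels in the difference, the elementary inequality $|\sqrt{a+c}-\sqrt{b+c}|\le |\sqrt a-\sqrt b|$ (for $a,b,c\ge0$) gives $|\err_\cD(w)^{1/2}-\err_\cD(v)^{1/2}|\le A_\cD(w-v)^{1/2}$. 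Because $\Pi^1_{K,p}$ is the $H^1$-orthogonal projection onto $\mathbb{P}_p(K)$ (by construction $(\Pi^1_{K,p}v)'=\Pi^0_{K,p-1}v'$), one has $|(I-\Pi^1_{K,p})g|_{H^1(K)}\le|g|_{H^1(K)}$ elementwise, and summing yields $A_\cD(w-v)^{1/2}\le|w-v|_{H^1(\Omega)}$.

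For \eqref{2} I would first note that $0\in V$ with $A_\cD(0)=0$, so $\inf_{w\in V}\err_\cD(w,f,\lambda)^{1/2}=\delta^{-1/2}\osc_\cD(f,\lambda)$. Thus \eqref{2} is equivalent to the continuous dependence estimate $|u(f,\lambda)-u(f_\cD,\lambda_\cD)|_{H^1(\Omega)}\lesssim\osc_\cD(f,\lambda)$ with a constant independent of $\cD$, $f$, $\lambda$, which then yields $C_1\eqsim\sqrt{\delta}$.

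To obtain that continuous dependence, set $e:=u(f,\lambda)-u_2$ with $u_2:=u(f_\cD,\lambda_\cD)$. Subtracting the weak formulations yields
\[
a_\lambda(e,v)=\langle f-f_\cD,v\rangle-\int_\Omega(\nu-\nu_\cD)u_2'v'\,dx-\int_\Omega(\sigma-\sigma_\cD)u_2v\,dx.
\]
Testing with $v=e$, using coercivity \eqref{eq:equiv}, and dividing by $|e|_{H^1(\Omega)}$, I would estimate the four contributions as follows. For $\int(f_1-f_{1,\cD})e\,dx$, orthogonality of $f_1-f_{1,\cD}=(I-\Pi^0_{K,p-1})f_1$ to $\mathbb{P}_{p_D-1}(K_D)$ lets me subtract $\Pi^0_{K_D,p_D-1}e$ from $e$ and then apply the standard $p$-robust $L^2$-projection estimate $\|e-\Pi^0_{K,p-1}e\|_{L^2(K)}\lesssim (h/p)|e|_{H^1(K)}$; Cauchy--Schwarz in the sum over $D$ produces exactly the first term of \eqref{eq:oscillation} times $|e|_{H^1(\Omega)}$. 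The $f_2$-term is a direct elementwise Cauchy--Schwarz pairing against $|e|_{H^1(\Omega)}$. For the coefficient terms I would control $\|\nu-\nu_\cD\|_{L^\infty(\Omega)}$ and $\|\sigma-\sigma_\cD\|_{L^\infty(\Omega)}$ by $\max_D|(I-\Pi^1_{K_D,p_D+1})\nu|_{H^1(K_D)}$ etc., using the zero-mean Sobolev embedding argument already employed in the proof of Property~\ref{property:bound}; these maxima are dominated by the corresponding $\ell^2$-sums, hence by $\osc_\cD(f,\lambda)$. To close, I need $|u_2|_{H^1(\Omega)}$ bounded uniformly in $\cD$: Property~\ref{property:bound} guarantees $\lambda_\cD\in\bar\Lambda$ with uniform coercivity constant $\alpha_*$, while $\|f_\cD\|_{H^{-1}(\Omega)}\lesssim\|f_{1,\cD}\|_{L^2}+\|f_{2,\cD}\|_{L^2}\lesssim\|f\|_F$ by stability of the $L^2$-projections. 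Summing the four estimates yields $|e|_{H^1(\Omega)}\lesssim\osc_\cD(f,\lambda)$.

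Finally, \eqref{0} is immediate: with $C_2=1$ and $C_1\eqsim\sqrt{\delta}$, and $b\in(0,1]$ a fixed constant coming from the choice of $B>1$ in Binev's algorithm (Corollary~\ref{corol20}), one merely chooses $\delta$ small enough that $C_1C_2<b$. The principal obstacle I anticipate is bookkeeping: making sure the $p$-robust $L^2$-approximation estimate is applied precisely where the $(h/p)$ weight of \eqref{eq:oscillation} needs to appear, and verifying that the $L^\infty$-control of the coefficient oscillation is compatible with the $H^1$-control built into $\osc_\cD$. Neither step requires new ideas beyond Property~\ref{property:bound} and standard $hp$-approximation results.
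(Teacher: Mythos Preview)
Your proposal is correct and follows essentially the same route as the paper: the same perturbation identity $a_\lambda(e,v)=\langle f-f_\cD,v\rangle-\int(\nu-\nu_\cD)u_2'v'-\int(\sigma-\sigma_\cD)u_2v$, the same four-term estimate (orthogonality plus the $hp$ $L^2$-projection bound for $f_1$, direct Cauchy--Schwarz for $f_2$, the zero-mean Sobolev argument of Property~\ref{property:bound} for the coefficients, and the uniform a~priori bound on $|u_2|_{H^1}$), and the same conclusion $C_1\eqsim\sqrt{\delta}$, $C_2=1$. Your presentation is slightly tidier in two places: you invoke the scalar inequality $|\sqrt{a+c}-\sqrt{b+c}|\le|\sqrt a-\sqrt b|$ to strip off the oscillation term in one shot before applying the reverse triangle inequality for the seminorm $A_\cD(\cdot)^{1/2}$, and you observe up front that $\inf_{w\in V}\err_\cD(w,f,\lambda)^{1/2}=\delta^{-1/2}\osc_\cD(f,\lambda)$ (attained at $w=0$), reducing \eqref{2} directly to the continuous-dependence estimate; the paper instead carries a generic $w$ through and bounds $\err_\cD(w,f,\lambda)^{1/2}$ from below by the oscillation at the end.
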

\begin{proof}
{We start by verifying condition \eqref{3}. For any $v,w\in H^1_0(\Omega)$ and for any $\cD \in \mathbb{D}$ and any $D\in \cD$, it holds that 
\begin{eqnarray}
 |({\rm I}- \Pi^1_{K_D,p_D}) w_{\vert {K_D}}|_{H^1(K_D)} &=&  \inf_{\varphi \in \mathbb{P}_{p_D} (K_D)} 
 |w_{\vert {K_D}}-\varphi|_{H^1(K_D)}\nonumber\\   
 &\leq&   \inf_{\varphi \in \mathbb{P}_{p_D} (K_D)} |v_{\vert {K_D}}-\varphi|_{H^1(K_D)} +|(v-w)_{\vert {K_D}}|_{H^1(K_D)}\nonumber\\
 &=& |({\rm I}- \Pi^1_{K_D,p_D}) v_{\vert {K_D}}|_{H^1(K_D)} +|(v-w)_{\vert {K_D}}|_{H^1(K_D)}.\nonumber
\end{eqnarray}
Two applications of a triangle inequality show that 
\begin{align*}
&\Big| \err_\cD(v,f,\lambda)^{\frac12} -
\err_\cD(w,f,\lambda)^{\frac12} \Big| \\
&
\le 
\left( \sum_{D\in\cD} 
\left(\left(\big|({\rm I}- \Pi^1_D) v_{\vert {{K_D}}}\big|_{H^1({K_D})}^2+\delta^{-1}{\rm osc}^2_D(f,\lambda)\right)^{\frac 1 2}\right. \right.\\
&\qquad\qquad\qquad\qquad\qquad-\left.\left.\left(\big|({\rm I}- \Pi^1_D) v_{\vert {{K_D}}}\big|_{H^1({K_D})}^2+\delta^{-1}{\rm osc}^2_D(f,\lambda)\right)^{\frac 1 2}\right)^2
\right)^{\frac12} \\
&\leq
\left( \sum_{D\in\cD} 
\left(\big|({\rm I}- \Pi^1_D) v_{\vert {{K_D}}}\big|_{H^1({K_D})}-
\big|({\rm I}- \Pi^1_D) v_{\vert {{K_D}}}\big|_{H^1(K_D)}\right)^2\right)^{\frac12} \le \|v-w\|_V,
\end{align*}
i.e.,
\eqref{3} holds true with constant $C_2=1$.}

\vspace{0.5cm}
Let us now verify assumption \eqref{2}. Note that $u(f_{\cD},\lambda_\cD)$ is well defined since 
$\lambda_\cD\in \overline\Lambda$. Setting for simplicity $u=u(f,\lambda)$ and 
$\bar u=u(f_{\cD},\lambda_\cD)$, it is straightforward to check that $u-\bar u$ satisfies for any $v\in V$
\begin{eqnarray}
a_\lambda(u-\bar u, v)= \langle f - f_\cD , v \rangle - \int_\Omega (\nu - \nu_\cD) {\bar u}^\prime v^\prime \ dx
 - \int_\Omega (\sigma  - \sigma_\cD) {\bar u} v\ dx
\end{eqnarray}
whence, using the Poincar\'e inequality $\|v\|_{L^2(\Omega)}\leq 2^{-\frac 1 2} \vert v \vert_{H^1_0(\Omega)}$, and selecting $v=u-\bar{u}$, we obtain 
\begin{eqnarray}
\alpha_* | u-\bar{u} |_{H^1(\Omega)}& \leq& 
\| f_1- {f}_{1,\cD} \|_{H^{-1}(\Omega)} + 
 \| f_2- {f}_{2,\cD}\|_{L^2 (\Omega)} \nonumber\\
&&+ \left (\|\nu - \nu_\cD\|_{L^\infty(\Omega)}  +\frac 1 2\ \|\sigma - \sigma_\cD\|_{L^\infty(\Omega)}\right)
| \bar u |_{{H^1}(\Omega)}. \label{eq:pert}
\end{eqnarray} 
We now bound the quantity on the right hand side of \eqref{eq:pert} in terms of ${\rm osc}^2_\cD(f,\lambda)$.
To this end, starting with the first term, we have for any $v\in H^1_0(\Omega)$
\begin{eqnarray}
 (f_1 - f_{1,\cD},v)_{L^2(\Omega)} &=& \sum_{D\in \cD}( ({\rm I} - \Pi^0_{K_D,p_D-1}) f_{1\vert {K_D}}, v)_{L^2(K_D)}\nonumber\\
 &=& \sum_{D\in \cD}( ({\rm I} - \Pi^0_{K_D,p_D-1}) f_{1\vert {K_D}}, ({\rm I} - \Pi^0_{K_D,p_D-1}) 
 v_{\vert {K_D}}  )_{L^2(K_D)}\nonumber\\
&\leq&   \sum_{D\in \cD}  \| ({\rm I} - \Pi^0_{K_D,p_D-1}) f_{1 \vert {K_D}} \|_{L^2(K_D)}
\| ({\rm I} - \Pi^0_{K_D,p_D-1}) v_{\vert {K_D}} )\|_{L^2(K_D)}\nonumber\\
\end{eqnarray}
By the  classical $hp$-error estimate for the orthogonal $L^2$-projection upon $\mathbb{P}_{p_D}(K_D)$ 
(see, e.g., \cite[Corollary 3.12]{SCHW98} ) we have $\| ({\rm I} - \Pi^0_{K_D,p_D-1}) v_{\vert {K_D}} )\|_{L^2(K_D)}
\leq   \hat C \frac{h_\cD}{p_\cD}|v|_{H^1(K_D)}$ for some constant $\hat C >0$. Thus, we get 
\begin{equation}\label{2.6:1}
\| f_1- {f}_{1,\cD} \|_{H^{-1}(\Omega)}  \leq  \hat C \left( \sum_{D\in \cD} \| \frac{h_D}{p_D} ({\rm I} - \Pi^0_{K_D,p_D-1}) f_{1 \vert {K_D}}\|^2_{L^2(K_D)}\right)^{\frac 1 2}.
\end{equation}
Concerning the second term on the right hand side of \eqref{eq:pert}, we simply write it as 
\begin{equation}\label{2.6:2}
\| f_2- {f}_{2,\cD} \|_{L^2(\Omega)}  = \left( \sum_{D\in \cD} \| ({\rm I} - \Pi^0_{K_D,p_D}) f_{2\vert{K_D}}\|^2_{L^2(K_D)}\right)^{\frac 1 2}.
\end{equation}
Coming to the third and fourth terms, we first observe that 
\begin{equation}\label{2.6:3}
\begin{split}
| \bar u|_{H^1(\Omega)} &\leq \frac{1}{\alpha_*}\left( 2^{-\frac 1 2} \|f_{1,\cD}\|_{L^{2}(\Omega)} +  \|f_{2,\cD}\|_{L^2(\Omega)}  \right) \\
& \leq\frac{1}{\alpha_*} \left( 2^{-\frac 1 2} \| f_1\|_{L^{2}(\Omega)} +  \|f_2\|_{L^2(\Omega)} \right)=:C(f),
\end{split}
\end{equation}
since $f_{i,\cD}$, $i=1,2$ is locally an $L^2$-projection of $f_i$. 
On the other hand, using the same argument as in the proof of Property \ref{property:bound} we get 
\begin{eqnarray}
\| \nu - \nu_\cD\|_{L^\infty(\Omega)}&=&\max_{D\in\cD} \|({\rm I} - \Pi^1_{K_D,p_D+1}) \nu_{\vert {K_D}}\|_{L^\infty(K_D)} \nonumber\\
&\leq&  \max_{D\in\cD} | K_D|^{\frac 1 2} |({\rm I} - \Pi^1_{K_D,p_D+1}) \nu_{\vert {K_D}}|_{H^1(K_D)}
\nonumber\\
&\leq& \left (\sum_{D\in\cD}  |({\rm I} - \Pi^1_{K_D,p_D+1}) \nu_{\vert {K_D}}|^2_{H^1(K_D)}\right)^{\frac 1 2}.\label{2.6:4}
\end{eqnarray}
A similar result holds for $\| \sigma - \sigma_\cD\|_{L^\infty(\Omega)}$. Substituting \eqref{2.6:1}-\eqref{2.6:4} into \eqref{eq:pert} and recalling \eqref{eq:oscillation} we get 
\begin{eqnarray}
\alpha_*| u-\bar u|_{H^1(\Omega)} \leq \left( \frac 3 2 C(f) + \hat C +1\right) 
\left( \sum_{D \in \cD} {\rm osc}^2_D(f,\lambda)\right)^{\frac 1 2}.
\end{eqnarray}
Thus, setting
$\bar C:=\frac{1}{\alpha_*} \left (\frac 3 2 C(f) + \hat C +1\right )$ and recalling \eqref{def:e}, we conclude that 
\begin{equation}
|u(f,\lambda)- u(f_\cD,\lambda_\cD) |_{H^1(\Omega)} \leq \bar C \delta^{\frac 1 2} 
\left(  \sum_{D\in \cD} e_D(w,f,\lambda)\right)^{\frac 1 2}=  \bar C \delta^{\frac 1 2} E_\cD(w,f,\lambda)^{\frac 1 2}
\end{equation}
for any $w\in H^1_0(\Omega)$. This proves that \eqref{2} is fulfilled with $C_1=\bar C \delta^{\frac 1 2}$.
Finally, choosing any $\delta$ such that $C_1 < b$ we fulfill \eqref{0}.
\end{proof}

We conclude that choosing $\delta$ sufficiently small we may apply Theorem \ref{th1}.
This leads to the conclusion that for solving \eqref{eq:two-point}, where $f=(f_1,f_2) \in L^2(\Omega) \times L^2(\Omega)$, and $\lambda=(\nu,\sigma) \in \Lambda$ defined in  \eqref{eq:lambda}, and with a root partition $\hat{\cD}$ that is sufficiently fine such that it satisfies Property~\ref{property:bound},
\Hpafem is an instance optimal reducer, in the sense of Theorem~\ref{th1}, of the error functional
$$
E_\cD(u(f,\lambda),f,\lambda)=\sum_{D \in \cD} \inf_{\varphi \in \mathbb{P}_{p_D} (K_D)} |u(f,\lambda)_{|{K_D}}-\varphi|_{H^1(K_D)}^2+\delta^{-1} 
\text{osc}_{D}^2(f,\lambda),
$$
over all $\cD \in \mathbb{D}$, where $\text{osc}_{D}^2(f,\lambda)$ is defined in \eqref{eq:oscillation}.

Finally, we consider assumption \eqref{4}.  At first, we note that in one dimension all partitions are trivially conforming, i.e., {$\mathbb{D}^c= \mathbb{D}$. Next,} we observe that the following result holds.
\begin{lemma}  \label{lem3} 
For any $\cD\in \mathbb{D}$ and any $v\in H_0^1(\Omega)$ there holds
\begin{equation}
\inf_{w_\cD \in {V}^c_\cD} |v-w_\cD|_{H^1(\Omega)}^2 = \sum_{D \in \cD} \inf_{\varphi \in \mathbb{P}_{p_D}(K_D)} |v-\varphi|^2_{H^1(K_D)}.
\end{equation}
\end{lemma}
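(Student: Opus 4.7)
The inequality $\geq$ is immediate: for any candidate $w_\cD \in V_\cD^c$, its restriction to each element $K_D$ lies in $\mathbb{P}_{p_D}(K_D)$, so
$$|v-w_\cD|_{H^1(\Omega)}^2 = \sum_{D \in \cD} |v-w_\cD|_{H^1(K_D)}^2 \ge \sum_{D \in \cD} \inf_{\varphi \in \mathbb{P}_{p_D}(K_D)} |v-\varphi|_{H^1(K_D)}^2.$$
The crux is thus to exhibit a single $w_\cD \in V_\cD^c$ whose piecewise $H^1$-seminorm error attains the local infima simultaneously.

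The key observation is that on each element $K_D=[a_D,b_D]$, the $|\cdot|_{H^1(K_D)}$-best approximation of $v_{|K_D}$ from $\mathbb{P}_{p_D}(K_D)$ is unique only up to an additive constant: any minimizer $\varphi_D$ is characterized by $\varphi_D' = \Pi^0_{K_D,p_D-1}(v'|_{K_D})$. Since the admissible polynomial degrees satisfy $p_D \ge 1$, the constant function $1$ belongs to $\mathbb{P}_{p_D-1}(K_D)$, and therefore the $L^2$-projection preserves integrals over $K_D$:
$$\varphi_D(b_D)-\varphi_D(a_D) = \int_{K_D} \varphi_D' = \int_{K_D} v' = v(b_D)-v(a_D).$$

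Using this freedom, I would normalize the constant of $\varphi_D$ on each element by imposing $\varphi_D(a_D)=v(a_D)$. The identity above then forces $\varphi_D(b_D)=v(b_D)$ as well. Glueing these local polynomials yields a well-defined function $w_\cD$ on $\Omega$ which is continuous at every interior node (both one-sided limits equal the value of $v$ at that node) and vanishes at $x=0$ and $x=1$ (since $v\in H^1_0(\Omega)$). Hence $w_\cD \in V_\cD^c$, and by construction $|v-w_\cD|_{H^1(K_D)}^2 = \inf_{\varphi \in \mathbb{P}_{p_D}(K_D)}|v-\varphi|_{H^1(K_D)}^2$ on each element, which gives the reverse inequality $\leq$ and completes the proof.

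The main (mild) subtlety is merely to justify that a best $H^1$-seminorm approximation of this form exists and that $p_D \ge 1$ on every element of every admissible $\cD$, so that constants lie in the image space of the derivative projection; both are guaranteed by the setup in Sect.~\ref{S:1d}. This argument is genuinely one-dimensional: in higher dimensions, the boundary values of the local best approximations cannot be matched across inter-element interfaces merely by adjusting a scalar, which is precisely why a non-trivial $C_{3,\cD}$ appears for $n=2$ in Sect.~\ref{S3}.
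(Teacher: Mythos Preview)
Your argument is correct and follows essentially the same route as the paper's proof: both exploit that the local $|\cdot|_{H^1(K_D)}$-best approximation satisfies $\int_{K_D}\varphi_D'=\int_{K_D}v'$, which allows the local minimizers to be glued into an element of $V_\cD^c$. The paper phrases the gluing as $w_\cD(x)=\int_0^x g$ with $g|_{K_D}=\varphi_D'$, while you fix the free constant by imposing $\varphi_D(a_D)=v(a_D)$; the two constructions are equivalent.
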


\begin{proof} For $D \in \cD$, let $q_D \in  \mathbb{P}_{p_D}(K_D)$ be such that $|v-q_D|_{H^1(K_D)}=\inf_{\varphi \in  \mathbb{P}_{p_D}(K_D)} |v-\varphi |_{H^1(K_D)}$.
Define $g \in L_2(\Omega)$ by $g|_{K_D}=q'_{D}$ for all $D \in \cD$, and $w_\cD \in H^1(\Omega)$ by $w_\cD(x)=\int_0^x g(s) ds$.
From $\int_{K_D} q'_D=\int_{K_D} v'$, we infer that $w_\cD(0)=w_\cD(1)=0$, and so $w_\cD \in {V}^c_\cD$. Moreover, $|v-w_\cD|_{H^1(\Omega)}^2= \sum_{D \in \cD}  |v-q_D|^2_{H^1(K_D)}$.
\end{proof}

Observing that $$ 
\inf_{\varphi \in \mathbb{P}_{p_D}(K_D)} |v-\varphi|^2_{H^1(K_D)}= | ({\rm I} - \Pi^1_{K_D,p_D}) v_{\vert {K_D}}|^2_{H^1(K_D)}\leq e_{D}(v,f,\lambda)$$ 
for any $f\in F$, $\lambda\in \Lambda$, we obtain the following result.
\begin{proposition} For all $\cD \in \mathbb{D}$ and all $v \in H^1_0(\Omega)$, one has
$$
\inf_{w_\cD \in {V}^c_\cD}  |v-w_\cD |_{H^1(\Omega)} \leq \inf_{(f,\lambda)
  \in F \times \Lambda}  \err_{\cD}(v,f,\lambda)^{\frac{1}{2}} ,
$$
i.e., {for $\cC:=I$} assumption \eqref{4} is fulfilled with $C_{3,\mathcal{D}}=1$. 
\end{proposition}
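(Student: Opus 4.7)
The strategy is to combine the preceding observation with Lemma~\ref{lem3} and then take the infimum over the data. First I would invoke Lemma~\ref{lem3}, which states that the broken best approximation error coincides with the conforming best approximation error in $V_\cD^c$:
\[
\inf_{w_\cD \in V^c_\cD} |v-w_\cD|_{H^1(\Omega)}^2 = \sum_{D \in \cD} \inf_{\varphi \in \mathbb{P}_{p_D}(K_D)} |v-\varphi|^2_{H^1(K_D)}.
\]

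Next, I would fix an arbitrary pair $(f,\lambda) \in F \times \Lambda$ and observe that, by the definition of $\Pi^1_{K_D,p_D}$ as an orthogonal projection in the $H^1$-seminorm, the local best approximation error can be rewritten as $|(\mathrm{I} - \Pi^1_{K_D,p_D}) v_{|K_D}|^2_{H^1(K_D)}$. Since $\mathrm{osc}^2_{K_D,p_D}(f,\lambda) \ge 0$, it follows directly from the definition \eqref{def:e} of $e_D$ that
\[
\inf_{\varphi \in \mathbb{P}_{p_D}(K_D)} |v-\varphi|^2_{H^1(K_D)} \le e_D(v,f,\lambda) \qquad \forall D \in \cD.
\]

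Summing this pointwise bound over all $D \in \cD$ and combining with the identity from Lemma~\ref{lem3} yields
\[
\inf_{w_\cD \in V^c_\cD} |v-w_\cD|^2_{H^1(\Omega)} \le \err_\cD(v,f,\lambda),
\]
and finally taking the infimum over $(f,\lambda) \in F \times \Lambda$ on the right-hand side gives the claim with $C_{3,\cD}=1$. There is essentially no obstacle here: the non-negativity of the oscillation term makes the bound immediate, and the nontrivial content has already been absorbed into the preceding lemma, which in dimension one is itself a straightforward consequence of reconstructing a conforming function by integrating the piecewise polynomial derivatives (this uses critically that $\mathbb{D}^c = \mathbb{D}$ in 1D, so that $\cC$ may be taken to be the identity).
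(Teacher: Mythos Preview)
Your proof is correct and follows essentially the same approach as the paper: the paper derives the proposition directly from Lemma~\ref{lem3} together with the observation, stated immediately before the proposition, that $\inf_{\varphi \in \mathbb{P}_{p_D}(K_D)} |v-\varphi|^2_{H^1(K_D)} = |(\mathrm{I} - \Pi^1_{K_D,p_D}) v_{|K_D}|^2_{H^1(K_D)} \le e_D(v,f,\lambda)$ for any $(f,\lambda)$, then sums over $D \in \cD$ and takes the infimum over the data.
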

As a consequence, \eqref{18} and \eqref{19} are fulfilled with $C_{3}=C_4=1$.
Since {\Hpafem calls the routine \Reduce} with the fixed value 
$\varrho=\frac{\mu}{1+(C_1+1)\omega}$,  and by Proposition \ref{prop_reduce}
the number of iterations in \Reduce is bounded by $\mathcal{O}(\log \varrho^{-1})$, we are guaranteed that the number of iterations performed by \Reduce at any call from \Hpafem is uniformly bounded. On the other hand, recalling \eqref{eq:def-hatpD}, for each iteration in \Reduce the polynomial degree in each marked element is increased by a constant value depending only on the local polynomial degree in the input partition. Thus, even in the worst-case scenario that at each iteration all elements are marked for enrichment,  we conclude that the output partition of \Reduce has a cardinality which is bounded by a fixed multiple of the one of the input partition, which is optimal as it is produced by \Hpnearbest. 

Another obvious, but relevant application of Lemma~\ref{lem3} is that \Hpafem is an instance optimal reducer over $\cD \in \mathbb{D}$ of the error functional written in the more common form
$$
\inf_{w_\cD \in {V}^c_\cD} |u(f,\lambda)-w_\cD|_{H^1(\Omega)}^2 
+\delta^{-1} 
\text{osc}_{\cD}^2(f,\lambda).
$$

\section{The Poisson problem in two dimensions}  \label{S3}
On a polygonal domain $\Omega \subset \mathbb{R}^2$, we consider the Poisson problem
$$
\left\{ 
\begin{array}{r@{}c@{}ll}
-\triangle u &\,=\, & f & \text{in }\Omega,\\
u&\,=\,&0 &\text{on }\partial \Omega,
\end{array}
\right.
$$
in standard variational form. 
We consider right-hand sides $f \in L^2(\Omega)$, and so take $V=H^1_0(\Omega)$, $F=L^2(\Omega)$, and $\Lambda =\emptyset$.
We equip $H^1_0(\Omega)$ with $|\cdot|_{H^1(\Omega)}$, and $H^{-1}(\Omega)$ with the corresponding dual norm.

Let ${\cK}_0$ be an initial conforming triangulation of $\bar{\Omega}$, and let in each triangle in ${\cK}_0$ one of its vertices be selected as its newest vertex, in such a way that if an internal edge of the triangulation is opposite to the newest vertex of the triangle on one side of the edge, then it is also opposite to the newest vertex of the triangle on the other side. As shown in \cite[Lemma 2.1]{BDD04}, such an assignment of the newest vertices can always be made.

Now let $\mathbb{K}$ be the collection of all triangulations that can be constructed from ${\cK}_0$ by {{\it newest vertex bisection}, i.e., }a repetition of bisections of triangles by connecting their newest vertex by the midpoint of the opposite edge. With each bisection, two new triangles are generated, being `children' of the triangle that was just bisected, with their newest vertices being defined as the midpoint of the edge that has been cut. The set of all triangles that can be produced in this way is naturally organized as a binary master tree $\mathfrak{K}$, having as roots the triangles from ${\cK}_0$. The triangles from $\mathfrak{K}$ are uniformly shape regular.
The collection $\mathbb{K}$ of triangulations of $\Omega$ is equal to the sets of leaves of all possible subtrees of $\mathfrak{K}$.

For $K \in \mathfrak{K}$, we set $V_K=H^1(K)$ and $F_K=L^2(K)$, and for $d \in \mathbb{N}$, 
we set 
{
\begin{equation}\label{def:aux1}
V_{K,d} :=\mathbb{P}_{{p}(d)}(K),\quad
F_{K,d} :=\mathbb{P}_{{p}(d)-1}(K),
\end{equation}
with, as in Sect.~\ref{def_ass}, $p=p(d)$ being the largest value in $\mathbb{N}$ such that 
 $\text{dim} \, {\mathbb P}_{p-1}(K)={2+p-1 \choose p-1} \leq d$.
For example,  for $d=1,\ldots,10$,  we have $p=1,1,2,2,2,3,3,3,3,4$.}

\begin{remark} Alternatively, one can select sequences of strictly nested spaces $(V_{K,d})_d$, $(F_{K,d})_d$ with the condition that for the values of $d$ of the form {${2+p-1 \choose p-1}$ for some $p=:p(d) \in \mathbb{N}$}, definitions in \eqref{def:aux1} hold.
\end{remark}

For $D=(K_D,d_D) \in \cK \times \mathbb{N}$, we write $V_D=V_{K_D,d_D}$, $F_D=F_{K_D,d_D}$ and $p_D=p(d_D)$.
{Note that with the current definition of $V_D$, this space is uniquely determined by specifying $K_D$ and $p_D$.}
For some constant $\delta>0$ that will be determined later, we set the local error functional
$$
e_{D}(w,f):=e_D(w)+
\delta^{-1} \frac{|K|}{p_D^2} \inf_{f_{D} \in \mathbb{P}_{p_D -1}(K_D)}\|f-f_{D}\|_{L^2(K_D)}^2,
$$
where
\begin{equation} \label{210}
e_D(w):=\inf_{\{w_{D} \in \mathbb{P}_{p_D}(K_D)\colon \int_{K_D} w_{D}=\int_{K_D} w\}} |w-w_{D}|_{H^1(K_D)}^2.
\end{equation}
We define
\begin{equation} \label{211}
Q_D(w,f):=(w_D,f_D)
\end{equation}
as the pair of functions for which the infima are attained.

Having specified the master tree $\mathfrak{K}$, the local approximation spaces $V_D$ and $F_D$, the error functional $e_D(w,f)$, and the projection $Q_D(w,f)=(w_D,f_D)$, we have determined, according to  Sect. \ref{def_ass}, the collection of $hp$-partitions $\mathbb{D}$, the approximation spaces $V_\cD$ and $F_\cD$ for $\cD \in \mathbb{D}$, the global error functional 
  \begin{equation} \label{globalerr}
 \err_\cD(w,f)=\sum_{D \in \cD} e_D(w)+\delta^{-1} {\rm osc}_\cD^2(f),
 \end{equation}
 where
 $$
 {\rm osc}_\cD^2(f):=\sum_{D \in \cD} \frac{|K|}{p_D^2} \inf_{f_{D} \in \mathbb{P}_{p_D-1}(K_D)}\|f-f_{D}\|_{L^2(K_D)}^2,
 $$
 as well as the projection $f_\cD:=\prod_{D \in \cD} f_D$.
  \bigskip
 
 We proceed with verifying assumptions \eqref{0}, \eqref{2} and \eqref{3}.
 
 \begin{proposition} \label{prop100}
There holds
$$
 \sup_{f \in F } |\err_\cD(w,f)^{\frac{1}{2}}-\err_\cD(v,f)^{\frac{1}{2}}|  \leq  \|w-v\|_V \qquad \forall\cD \in \mathbb{D},~~\forall v,w \in V,
 $$
i.e., \eqref{3} is valid with $C_2=1$.
  \end{proposition}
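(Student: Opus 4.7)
The plan is to reduce the claim to a pointwise-per-element estimate and then collect the local bounds via an $\ell^2$ triangle inequality.

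First I would observe that the oscillation term $\delta^{-1}\mathrm{osc}_\cD^2(f)$ depends on $f$ but not on $v$ or $w$, and is nonnegative. Writing $a=\sum_D e_D(w)$, $b=\sum_D e_D(v)$, $c=\delta^{-1}\mathrm{osc}_\cD^2(f)\ge 0$, the elementary inequality
\[
\bigl|\sqrt{a+c}-\sqrt{b+c}\bigr|\;=\;\frac{|a-b|}{\sqrt{a+c}+\sqrt{b+c}}\;\le\;\frac{|a-b|}{\sqrt{a}+\sqrt{b}}\;=\;\bigl|\sqrt{a}-\sqrt{b}\bigr|
\]
shows that it suffices to prove the estimate with the oscillation term dropped, uniformly in $f$. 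So the supremum over $f$ disappears from the analysis.

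Next, I would use the explicit form of $e_D(w)$. The admissible set in \eqref{210} is an affine subspace of $\mathbb{P}_{p_D}(K_D)$, but the map $w\mapsto w_D$ picking out the minimizer is linear, since $\nabla w_D$ is the $L^2(K_D)$-orthogonal projection of $\nabla w$ onto $\nabla\mathbb{P}_{p_D}(K_D)$. Writing this linear projector as $\Pi_D$, we have $\sqrt{e_D(w)}=|(I-\Pi_D)w|_{H^1(K_D)}$. The reverse triangle inequality in the $H^1(K_D)$-seminorm then gives
\[
\bigl|\sqrt{e_D(w)}-\sqrt{e_D(v)}\bigr|\;\le\;|(I-\Pi_D)(w-v)|_{H^1(K_D)}.
\]
To control the right-hand side by $|w-v|_{H^1(K_D)}$, it is enough to note that the constant polynomial equal to the mean of $(w-v)$ on $K_D$ is admissible in the infimum defining $e_D(w-v)$ (since it has the correct integral), so
\[
|(I-\Pi_D)(w-v)|_{H^1(K_D)}\;\le\;\bigl|(w-v)-\overline{(w-v)}\bigr|_{H^1(K_D)}\;=\;|w-v|_{H^1(K_D)}.
\]

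Finally I would square, sum over $D\in\cD$, and use that $\{K_D\}_{D\in\cD}$ is an essentially disjoint partition of $\Omega$ to get $\sum_D |w-v|_{H^1(K_D)}^2=|w-v|_{H^1(\Omega)}^2=\|w-v\|_V^2$. Combining with the triangle inequality for the $\ell^2(\cD)$-norm applied to the sequences $(\sqrt{e_D(w)})_D$ and $(\sqrt{e_D(v)})_D$,
\[
\Bigl|\bigl(\textstyle\sum_D e_D(w)\bigr)^{1/2}-\bigl(\sum_D e_D(v)\bigr)^{1/2}\Bigr|\;\le\;\Bigl(\sum_D\bigl(\sqrt{e_D(w)}-\sqrt{e_D(v)}\bigr)^2\Bigr)^{1/2}\le\|w-v\|_V,
\]
yields the result with $C_2=1$. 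There is no real obstacle here; the only point requiring a bit of care is the linearity of the projector $\Pi_D$ and verifying that the mean-value constant is admissible in the constrained infimum, which is why the bound constant is exactly $1$ and not merely a shape-regularity constant.
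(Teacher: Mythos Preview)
Your proof is correct and follows essentially the same approach as the paper's: establish the local bound $|e_D(w)^{1/2}-e_D(v)^{1/2}|\le|w-v|_{H^1(K_D)}$ and then collect via the reverse triangle inequality in $\ell^2(\cD)$. The only cosmetic difference is that you strip the oscillation term once globally via the scalar inequality $|\sqrt{a+c}-\sqrt{b+c}|\le|\sqrt{a}-\sqrt{b}|$, whereas the paper (referring back to the 1D argument in Proposition~\ref{prop:assumptions}) applies the $\ell^2$ reverse triangle inequality first and removes the oscillation locally inside each summand; also, the paper obtains the local bound directly from the infimum form of $e_D$ rather than invoking linearity of the projector, but the two are equivalent.
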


\begin{proof}
{For $v,w \in V$, it holds that $e_D(w)^{\frac{1}{2}} \leq e_D(v)^{\frac{1}{2}}+|v-w|_{H^1(K_D)}$,
which yields the proof using the same arguments as in the proof of Proposition \ref{prop:assumptions}.}
\end{proof}

\begin{proposition} \label{prop11}
There holds
$$
|u(f)-u(f_\cD)|_{H^1(\Omega)} \lesssim \sqrt{\delta} \inf_{w \in H^1_0(\Omega)} \err_\cD(w,f)^{\frac{1}{2}} \quad\forall \cD \in \mathbb{D},~~\forall f\in L^2(\Omega),
$$
i.e., \eqref{2} is valid with $C_1 \eqsim \sqrt{\delta}$, and, when $\delta$ is chosen to be sufficiently small, so is \eqref{0}.
\end{proposition}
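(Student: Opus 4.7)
The approach is to reduce the desired estimate to a duality computation, turning an $H^1$-error into an $H^{-1}$-norm of the data error, and then to absorb the latter into the data-oscillation term of $\err_\cD(\cdot,f)$. Setting $e := u(f)-u(f_\cD) \in H^1_0(\Omega)$, the difference satisfies $-\Delta e = f - f_\cD$ weakly on $\Omega$ since $f_\cD \in L^2(\Omega) \subset H^{-1}(\Omega)$. Testing against $e$ itself yields
$$|u(f)-u(f_\cD)|_{H^1(\Omega)} = \|f - f_\cD\|_{H^{-1}(\Omega)},$$
so it suffices to control this dual norm.

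For arbitrary $v \in H^1_0(\Omega)$, I would split
$$\int_\Omega (f - f_\cD)\, v \, dx = \sum_{D \in \cD} \int_{K_D} (f - f_D)\, v \, dx.$$
By \eqref{210}--\eqref{211}, $f_D$ is the $L^2(K_D)$-orthogonal projection of $f$ onto $\mathbb{P}_{p_D-1}(K_D)$, hence $f - f_D$ is $L^2(K_D)$-orthogonal to $\mathbb{P}_{p_D-1}(K_D)$. Denoting by $\Pi_D^0 v$ the $L^2(K_D)$-projection of $v$ onto $\mathbb{P}_{p_D-1}(K_D)$ and applying Cauchy--Schwarz gives, on each element,
$$\Big|\int_{K_D}(f - f_D)\, v \, dx\Big| = \Big|\int_{K_D}(f - f_D)(v - \Pi_D^0 v)\, dx\Big| \leq \|f - f_D\|_{L^2(K_D)}\|v - \Pi_D^0 v\|_{L^2(K_D)}.$$
The decisive quantitative input is the standard $hp$-approximation bound on shape-regular triangles (see, e.g., \cite[Thm.~4.76]{SCHW98}),
$$\|v - \Pi_D^0 v\|_{L^2(K_D)} \lesssim \frac{h_{K_D}}{p_D}|v|_{H^1(K_D)} \eqsim \frac{|K_D|^{1/2}}{p_D}|v|_{H^1(K_D)},$$
with a constant uniform in $p_D$ and across the shape class of triangles produced by newest-vertex bisection from $\cK_0$.

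Summing elementwise via a discrete Cauchy--Schwarz then yields
$$\Big|\int_\Omega (f-f_\cD)\, v\, dx\Big| \lesssim \Big(\sum_{D \in \cD} \frac{|K_D|}{p_D^2}\|f-f_D\|_{L^2(K_D)}^2\Big)^{1/2}|v|_{H^1(\Omega)} = \osc_\cD(f)\,|v|_{H^1(\Omega)},$$
so $\|f - f_\cD\|_{H^{-1}(\Omega)} \lesssim \osc_\cD(f)$. Because the oscillation term in \eqref{globalerr} is independent of $w$, one has
$$\osc_\cD(f) = \sqrt{\delta}\,(\delta^{-1}\osc_\cD^2(f))^{1/2} \leq \sqrt{\delta}\,\inf_{w \in V}\err_\cD(w,f)^{1/2},$$
which, chained with the previous inequalities, delivers the first assertion with $C_1 \eqsim \sqrt{\delta}$. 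Taking $\delta$ small enough then makes $C_1 C_2 < b$, so \eqref{0} holds. The main obstacle is pinning down the cited $hp$ $L^2$-projection estimate with the correct sharp factor $h_{K_D}/p_D$ and with a constant that is uniform both in the polynomial degree and over the entire shape class $\mathfrak{K}$; everything else reduces to orthogonality and Cauchy--Schwarz.
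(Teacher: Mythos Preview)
Your proof is correct and follows essentially the same route as the paper's: reduce to bounding $\|f-f_\cD\|_{H^{-1}(\Omega)}$, exploit the $L^2(K_D)$-orthogonality of $f-f_D$ to $\mathbb{P}_{p_D-1}(K_D)$ to insert a polynomial approximation of the test function, and invoke the $hp$ estimate $\|v-\Pi_D^0 v\|_{L^2(K_D)}\lesssim \frac{\diam(K_D)}{p_D}|v|_{H^1(K_D)}$ (the paper cites \cite{CHQZ06} for this, you cite \cite{SCHW98}). The only cosmetic difference is that the paper writes the orthogonality step as $\inf_{v\in F_\cD}\langle f-f_\cD,w-v\rangle$ rather than explicitly subtracting the elementwise projection; the content is identical.
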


\begin{proof}
Since $f \mapsto u(f) \in \cL(H^{-1}(\Omega),H^1_0(\Omega))$ is an isomorphism, it is enough to estimate $\|f-f_\cD\|_{H^{-1}(\Omega)}$. To this end, we note that
for $K$ being a triangle and $p \in \mathbb{N}$, it holds that \cite{CHQZ06}
$$
\sup_{0 \neq w \in H^1(K)} \inf_{v \in \mathbb{P}_{p}(K)} \frac{\|w-v\|_{L^2(K)}}{|w|_{H^1(K)}} \lesssim\frac{\diam(K)}{p+1},
$$
only dependent on a lower bound for the smallest angle in $K$.
Consequently, we have that 
\begin{equation} \label{6}
\begin{split}
&\|f-f_\cD\|_{H^{-1}(\Omega)} =   \sup_{w \in H^1_0(\Omega)} \frac{ \inf_{v \in F_\cD}\langle f- f_\cD,w-v \rangle_{L^2(\Omega)}}{|w|_{H^1(\Omega)}}
\\
&\lesssim 
\sup_{w \in H^1_0(\Omega)} \frac{\sum_{D \in \cD} 
\frac{|K_D|^{\frac{1}{2}}}{p_D} \|f- f_D\|_{L^2(K_D)}  |w|_{H^1(K_D)}} 
{|w|_{H^1(\Omega)}}
\leq \sqrt{{\rm osc}_\cD^2(f)}.
\end{split}
\end{equation}
\end{proof}

\subsection{{Conforming $h$-partitions, and conforming $hp$ finite
    element spaces}}\label{2D-conformity}
For the design of a routine \Reduce, in particular, for a posteriori error estimation, it is preferable to work with {$h$-partitions} that are conforming.
Let
$$
\mathbb{K}^c:=\{\cK \in \mathbb{K}\colon \cK \text{ is conforming}\}.
$$
As shown in \cite[Lemma 2.5]{BDD04}, for $\cK \in \mathbb{K}$, its smallest refinement
$\cK^c \in \mathbb{K}^c$ satisfies  $\# \cK^c \lesssim \#{\cK}$.

With the subclass 
$$
\mathbb{D}^c:=\{\cD \in \mathbb{D}\colon \cK(\cD) \in \mathbb{K}^c\},
$$
we define $\cC: \mathbb{D}\to\mathbb{D}^c$  
by setting $\cC(\cD)=\underline{\cD}$, where $\underline{\cD}$ is defined as the partition in  $\mathbb{D}^c$ with {\it minimal} $\# \underline{\cD}$ for which $\underline{\cD} \geq \cD$. That is, $\cK(\underline{\cD} )=\cK(\cD)^c$, and $p_{\underline{D}}=p_{D}$ for $\underline{D} \in \underline{\cD}$, $D \in \cD$ with $K_{\underline{D}} \subseteq K_{D}$.

Unfortunately, $\sup_{\cD \in \mathbb{D}} \frac{\# \cC(\cD)}{\# \cD} =\infty$, i.e., \eqref{19} is not valid. 
Indeed, as an example, consider $\cK_0$ to consist of two triangles $K_1$ and $K_2$.
Let $\cD \in \mathbb{D}$ be such that $K_1 \in \cK(\cD)$, with corresponding polynomial degree $p(d)$, and that in $\cK(\cD)$, $K_2$ has been replaced by $2^N$ triangles of generation $N$, each with polynomial degree $1$. Then $\# \cD \eqsim d+ 2^N$.
Since $\cK(\cC(\cD)) = \cK(\cD)^c$ contains in any case $\eqsim 2^{N/2}$ triangles inside $K_1$, so with polynomial degrees $p(d)$, we conclude that
$\# \cC(\cD) \gtrsim 2^N+2^{N/2} d$. By taking say $d \eqsim 2^N$, we conclude the above claim.

The fact that \eqref{19} does not hold implies that, unlike for an $h$-method, we will not have a proper control on the dimension of the finite element spaces that are created inside \Reduce.
\medskip

From \eqref{100}, recall the definition $V_{\cD}^c=V_{\cD} \cap H^1_0(\Omega)$ for $\cD \in \mathbb{D}^c$,
and from \eqref{210}-\eqref{211}, recall the definition of $e_D(w)$ and $w_D$ for $D \in \cD$ and $w \in H^1(K_D)$.
The main task in this section will be the proof of the following result.

\begin{theorem} \label{th2} Setting, for $\cD \in \mathbb{D}$, $\|p_\cD\|_\infty:=\max_{D \in \cD} p_D$ , 
for $\cD \in \mathbb{D}^c$ 
it holds that
$$
\inf_{v \in V_{\cD}^c} |w-v|_{H^1(\Omega)}^2 \lesssim (1+\log \|p_\cD\|_\infty)^3 \sum_{D \in \cD} e_D(w)\quad \forall w \in H^1_0(\Omega).
$$
\end{theorem}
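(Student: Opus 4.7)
\medskip\noindent\textbf{Proof plan for Theorem~\ref{th2}.}

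The plan is to pass through the (generally discontinuous) best piecewise approximation and then project it onto $V_\cD^c$ via a carefully chosen enrichment operator. For each $D \in \cD$, let $w_D \in \mathbb{P}_{p_D}(K_D)$ be the minimizer in \eqref{210} and set $\tilde w \in V_\cD$ by $\tilde w|_{K_D}:=w_D$. Then
\[
\sum_{D \in \cD} e_D(w) = \sum_{D \in \cD} |w-w_D|_{H^1(K_D)}^2 = \sum_{D \in \cD} |w-\tilde w|_{H^1(K_D)}^2,
\]
so by the triangle inequality it is enough to exhibit $v \in V_\cD^c$ with $|\tilde w-v|_{H^1(\Omega)}^2 \lesssim (1+\log \|p_\cD\|_\infty)^3 \sum_D e_D(w)$. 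Since $\tilde w - v$ is $H^1$-piecewise polynomial, the task is to construct an enrichment $E: V_\cD \to V_\cD^c$, acting as the identity on $V_\cD^c$, whose defect $E \tilde w - \tilde w$ is controlled by the jumps of $\tilde w$ across internal edges and by the boundary traces of $\tilde w$ on $\partial\Omega$.

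The operator $E$ is assembled in the spirit of Veeser \cite{Vee12} and of Sch\"otzau--Schwab--Toselli and Houston--Sch\"otzau--Wihler: (i) define nodal values at interior vertices of $\cK(\cD)$ as weighted means of the values of $\tilde w$ coming from the incident elements, and set boundary-vertex values to zero; (ii) extend to edges by averaging the polynomial traces coming from the two sides, matched to the vertex values, and to $0$ on boundary edges; (iii) lift these edge data into each element by the Babu\v ska--Suri / Mu\~noz-Sola polynomial extension, and (iv) correct residual interior moments by element bubbles. Because $E$ reproduces conforming polynomials, one obtains the elementwise bound
\[
|\tilde w - E\tilde w|_{H^1(K)}^2 \lesssim (1+\log p_K) \sum_{e \subset \partial K} p_K \|[\tilde w]\|_{L^2(e)}^2,
\]
where $[\tilde w]$ denotes the jump across an interior edge and $[\tilde w]:=\tilde w|_K$ on an edge $e \subset \partial\Omega$; the first logarithmic factor comes from the sharp inverse/trace estimates in $hp$-FEM for the polynomial extension.

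To control the jump terms, I would use the $hp$-trace inequality
\[
p\,\|q\|_{L^2(e)}^2 \lesssim (1+\log p)^2\bigl(|q|_{H^1(K)}^2 + \mbox{(zero-mean penalty)}\bigr)
\]
applied to $q = w - w_{D}$ on each $K=K_D$ adjacent to $e$, and then bound $[\tilde w] = (w-w_{D_2})-(w-w_{D_1})$ on interior edges and $\tilde w = (w-w_D)$ on $\partial\Omega$-edges (using $w|_{\partial\Omega}=0$). Because the minimizer in \eqref{210} fixes only the mean on $K_D$, the zero-mean part of $w-w_D$ is treated by a Poincar\'e inequality and its residual by the vertex/edge averaging step, so that the outcome is
\[
p_K\|[\tilde w]\|_{L^2(e)}^2 \lesssim (1+\log p_K)^2 \sum_{D' \,\mbox{\scriptsize incident to } e} e_{D'}(w).
\]
Summing the elementwise bound over $K \in \cK(\cD)$, using shape regularity to bound the number of edges per element and the finite overlap of patches, produces the cubic log factor $(1+\log \|p_\cD\|_\infty)^3$, which is the claimed estimate.

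\medskip\noindent\textbf{Main obstacle.} The crux is obtaining genuinely \emph{logarithmic} rather than algebraic dependence on $\|p_\cD\|_\infty$. This rests on the optimal polynomial edge-extension operators (Babu\v ska--Suri, Mu\~noz-Sola) together with the sharp $hp$-trace inequalities of Melenk--Wohlmuth \cite{MW01}, whose combined constants contribute the three logarithmic factors; the bookkeeping of those factors through the vertex-averaging, edge-lifting, and interior-bubble steps is where the delicate part of the proof lies.
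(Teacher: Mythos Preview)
Your overall strategy---construct a continuous approximation from the discontinuous best approximation $\tilde w=(w_D)_D$ and control the defect by the local errors---matches the paper. The critical divergence, and the place where your argument breaks, is the choice of edge norm.

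You route everything through $p$-weighted $L^2$-norms of the jumps, and in step~(4) you assert
\[
p\,\|q\|_{L^2(e)}^2 \lesssim (1+\log p)^2\bigl(|q|_{H^1(K)}^2+\text{zero-mean term}\bigr),\qquad q=w-w_D.
\]
This inequality is false for a general $q\in H^1(K)$: the trace map $H^1(K)\to L^2(e)$ carries no polynomial-degree dependence, and $w-w_D$ is \emph{not} a polynomial (only $w_D$ is). On a unit triangle one can take $q$ with $|q|_{H^1}=1$, zero mean, and $\|q\|_{L^2(e)}\eqsim 1$; then the left side is $\eqsim p$ while the right side is $\eqsim(\log p)^2$. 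The $H^1$-seminorm orthogonality of $w-w_D$ to $\mathbb{P}_{p_D}$ gives no improvement in its $L^2(e)$-trace. Consequently, even granting your step~(3), the jump bound you need does not hold, and the final estimate would be polluted by a factor of $\|p_\cD\|_\infty$, not $(1+\log\|p_\cD\|_\infty)^3$. (Step~(3) itself is also optimistic: the standard DG enrichment operators yield $p^2/|e|$ rather than $p$ in front of $\|[\tilde w]\|_{L^2(e)}^2$; the Babu\v ska--Suri/Mu\~noz-Sola liftings you cite are bounded in $H^{1/2}$, not from weighted $L^2$.)

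The paper avoids this obstruction by working entirely in $H^{1/2}$-norms on the skeleton. It defines $\Pi_\cD w$ elementwise as $E_{K_D}$ of a skeleton projector $\Pi_{\partial\cD}(w|_{\partial\cK(\cD)})$ plus the bubble $w_D-E_{K_D}(w_D|_{\partial K_D})$, where $E_{K_D}:H^{1/2}(\partial K_D)\to H^1(K_D)$ is the polynomial-preserving extension that is bounded uniformly in $p$. The jump control then reduces to bounding $\|(w-w_D)|_e\|_{H^{1/2}(e)}$, which follows from the \emph{$p$-independent} trace theorem $H^1(K)\to H^{1/2}(\partial K)$, so that
\[
\|(w-w_D)|_e\|_{H^{1/2}(e)}^2 \lesssim e_D(w)
\]
with no degree factor at all. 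The three logarithms enter only through the polynomial inequalities on an edge,
\[
\|z\|_{L^\infty(I)}\lesssim(1+\log p)^{1/2}\|z\|_{H^{1/2}(I)},\qquad
\|z\|_{H^{1/2}_{00}(I)}\lesssim(1+\log p)\|z\|_{H^{1/2}(I)},
\]
which are applied to the polynomial edge data when matching vertex values and when passing to $H^{1/2}_{00}$ so that the extension can be summed over edges. If you want to repair your argument, replace the $p$-weighted $L^2$ jump framework by this $H^{1/2}$ framework; the rest of your outline (vertex averaging, edge projection, bounded polynomial lifting) then goes through essentially as in the paper.
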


Since, for $\cD \in \mathbb{D}$, obviously $\sum_{D \in \cC(\cD)} e_D(w) \leq \sum_{D \in \cD} e_D(w)$, Theorem~\ref{th2} implies \eqref{4} with
$$
C_{3,\cD} \eqsim (1+\log \|p_\cD\|_\infty)^\frac{3}{2}.
$$

For an underlying $h$-partition that is conforming, Theorem~\ref{th2} says that the error in $H^1$-norm of the best conforming $hp$-approximation of a $w \in H^1_0(\Omega)$, is at most slightly larger than 
the error in the broken $H^1$-norm of the best nonconforming $hp$-approximation.

The proof of this remarkable result will be based on Veeser's proof in \cite{Vee12} of the corresponding result in the `$h$'-setting.
In \cite{Vee12}, the result is shown by taking $v$ to be the Scott-Zhang (\cite{SZ90}) quasi-interpolant of $w$. 
This Scott-Zhang quasi-interpolation is constructed in terms of the nodal basis, and the proof relies on an inverse inequality applied to these basis functions, which inequality involves a multiplicative factor that is known to degrade seriously, i.e. not logarithmically, with increasing polynomial degree.

In our proof the role of the nodal basis on a triangle will be played by the union of the three linear nodal basis functions associated to the vertices, the polynomials on each edge that vanish at the endpoints, that will be boundedly extended to polynomials on the interior of the triangle, and, finally,  the polynomials on the triangle that vanish at its boundary.
We will construct a $\Pi_{\cD} \in \cL(H_0^1(\Omega),V_{\cD}^c)$ such that, with
\begin{equation} \label{700}
p_{D,\cD}:=\max_{\{D' \in \cD: K_{D'} \cap K_D \neq \emptyset\}} p_{D'} \quad \forall  D \in \cD,
\end{equation}
it holds that
\begin{equation} \label{701}
|(w-\Pi_{\cD} w)|_{K_D}|^2_{H^1(K_D)} \leq (1+\log p_{D,\cD})^3 \sum_{\{D' \in \cD: K_{D'} \cap K_D \neq \emptyset\}} e_{D'}(w) \quad \forall  D \in \cD,
\end{equation}
which obviously implies the statement of the theorem. Since the right-hand side of \eqref{701} vanishes for $w \in V_{\cD}^c$, because it even vanishes for $w \in V_{\cD}$, the mapping $\Pi_{\cD}$ is a {\it projector}.

\begin{proof}{({\it Theorem \ref{th2}})} Let $\cD \in \mathbb{D}^c$. In order to show \eqref{701}, it is sufficient to show
\begin{equation} \label{7}
|(\Pi_{\cD} w)|_{K_D}-w_D|^2_{H^1(K_D)} \leq (1+\log p_{D,\cD})^3 \sum_{\{D' \in \cD: K_{D'} \cap K_D \neq \emptyset\}} e_{D'}(w) \quad \forall  D \in \cD,\, w \in H_0^1(\Omega).
\end{equation}

Let $\cN(\cD)$ and $\cE(\cD)$ denote the collection of vertices (or nodes), and (closed) edges of $\cK(\cD)$.
To construct $\Pi_{\cD}$, for $e \in \cE(\cD)$ we set
\begin{equation} \label{15}
p_{e,\cD}:=\min\{p_D: D \in \cD,\, e \subset \partial K_D\},\quad \bar{p}_{e,\cD}:=\max\{p_D: D \in \cD,\, e \subset \partial K_D\}.
\end{equation}
With the mesh skeleton $\partial \cK(\cD):=\cup_{D \in \cD} \partial K_D$, we set
\begin{align*}
V_{\partial\cD}&:=\{v \in C(\partial \cK(\cD)): v|_e \in \mathbb{P}_{p_{e,\cD}}(e)\,\,\,\forall  e\in\cE(\cD)\},\\
\bar{V}_{\partial\cD}&:=\{v \in C(\partial \cK(\cD)): v|_e \in \mathbb{P}_{\bar{p}_{e,\cD}}(e)\,\,\,\forall  e\in\cE(\cD)\}.
\end{align*}
We construct $\Pi_{\partial\cD}\in \cL(\prod_{e \in \cE(\cD)} H^{\frac{1}{2}}(e),V_{\partial\cD})$, and an auxiliary $\bar{\Pi}_{\partial\cD}\in \cL(\prod_{e \in \cE(\cD)} H^{\frac{1}{2}}(e),\bar{V}_{\partial\cD})$, such that
\begin{equation} \label{9}
(\Pi_{\partial\cD} v)|_{\partial\Omega}=0 \quad \text{for all } v=(v_e)_{e \in \cE(\cD)} \in \prod_{e \in \cE(\cD)} H^{\frac{1}{2}}(e) \text{ with } v_e=0 \text{ for } e \subset \partial\Omega.
\end{equation}

For any triangle $K$ with edges $e_1,e_2,e_3$, there exists an extension $E_K \in \cL(H^{\frac{1}{2}}(\partial K),H^1(K))$ that, for any $p \in \mathbb{N}$, maps $C(\partial K) \cap \prod_{i=1}^3 \mathbb{P}_p(e_i)$ into $\mathbb{P}_p(K)$  (see e.g. \cite[Sect.7]{18.64}).
Defining $\Pi_{\cD}$ by
\begin{equation} \label{11} 
(\Pi_{\cD} w)|_{K_D}:=E_{K_D} ((\Pi_{\partial\cD} w|_{\partial \cK(\cD)})|_{\partial K_D})+w_D-E_{K_D}(w_D|_{\partial K_D}),
\end{equation}
in view of the definition of  $V_{\partial\cD}$ and \eqref{9}, we have $\Pi_{\cD} \in \cL(H_0^1(\Omega),V_{\cD}^c)$.

To construct $\Pi_{\partial\cD}$, $\bar{\Pi}_{\partial\cD}$, for each $\n \in \cN(\cD)$ we select some
\begin{equation} \label{10}
e_{\n}\in \cE(\cD) \text{ with } \n \in e_{\n}  \text{ and } e_{\n} \subset \partial\Omega  \text{ when } \n \in \partial\Omega.
\end{equation}
For $\n \in \cN(\cD)$, by $\phi_{\n}$ we denote the nodal hat function, i.e., $\phi_{\n}$ is continuous piecewise linear w.r.t. $\cK(\cD)$ and $\phi_{\n}(\hat{v})=\delta_{v,\hat{v}}$ $\forall v,\hat{v} \in \cN(\cD)$.
For $e \in \cE(\cD)$, let
\begin{align*}
\bar{Q}_e&:H^{\frac{1}{2}}(e) \rightarrow H^{\frac{1}{2}}(e) \text{ be the } H^{\frac{1}{2}}(e)\text{-orthogonal projector onto } \mathbb{P}_{\bar{p}_{e,\cD}}(e),\\
Q_{0,e}&:H^{\frac{1}{2}}(e) \rightarrow H^{\frac{1}{2}}(e) \text{ be the } H^{\frac{1}{2}}(e)\text{-orthogonal projector onto } \mathbb{P}_{p_{e,\cD}}(e) \cap H^1_0(e),\\
\bar{Q}_{0,e}&:H^{\frac{1}{2}}(e) \rightarrow H^{\frac{1}{2}}(e) \text{ be the } H^{\frac{1}{2}}(e)\text{-orthogonal projector onto } \mathbb{P}_{\bar{p}_{e,\cD}}(e) \cap H^1_0(e).
\end{align*}
Denoting the endpoints of an $e \in \cE(\cD)$ by $\n_{1,e},\n_{2,e}$, we now define $\Pi_{\partial\cD}$ and $\bar{\Pi}_{\partial\cD}$ by setting, for $v=(v_e)_{e \in \cE(\cD)} \in \prod_{e \in \cE(\cD)} H^{\frac{1}{2}}(e)$,
\begin{align*}
(\Pi_{\partial\cD} v)|_e&:=\sum_{i=1}^2 (\bar{Q}_{e_{\n_{i,e}}} v_{e_{\n_{i,e}}})(\n_{i,e})  \phi_{\n_{i,e}}|_e +Q_{0,e}\Big(v_e-\sum_{i=1}^2(\bar{Q}_{e_{\n_{i,e}}} v_{e_{\n_{i,e}}})(\n_{i,e})  \phi_{\n_{i,e}}|_e\Big),\\
(\bar{\Pi}_{\partial\cD} v)|_e&:=\sum_{i=1}^2 (\bar{Q}_{e_{\n_{i,e}}} v_{e_{\n_{i,e}}})(\n_{i,e})  \phi_{\n_{i,e}}|_e +\bar{Q}_{0,e}\Big(v_e-\sum_{i=1}^2(\bar{Q}_{e_{\n_{i,e}}} v_{e_{\n_{i,e}}})(\n_{i,e})  \phi_{\n_{i,e}}|_e\Big),
\end{align*}
for any $e \in \cE(\cD)$.
It is clear that $\Pi_{\partial\cD}$ maps into $V_{\partial\cD}$, and, thanks to \eqref{10}, that it satisfies \eqref{9}.
Similarly, $\bar{\Pi}_{\partial\cD}$ maps into $\bar{V}_{\partial\cD}$

These definitions show that, for $D \in \cD$, $(\Pi_{\cD} w)|_{K_D}$ depends only on $w|_{\cup\{K_{D'}: D' \in \cD,\,K_{D'} \cap K_D \neq \emptyset\}}$. Therefore, in order to prove \eqref{7}, a homogeneity argument shows that we may assume that $K_D$ is a uniformly shape regular triangle with
$$
|K_D|=1.
$$
Since the extension $E_{K_D} \in \cL(H^{\frac{1}{2}}(\partial K_D),H^1(K_D))$ can be chosen to be uniformly bounded over all such $K_D$, in view of \eqref{11} in order to arrive at \eqref{7}, and so at the statement of the theorem, what remains to be proven is that
\begin{equation} \label{e14}
\begin{split}
\|(\Pi_{\partial\cD} &w|_{\partial \cK(\cD)})|_{\partial K_D}-w_D|_{\partial K_D}\|^2_{H^{\frac{1}{2}}(\partial K_D)}\\
& \lesssim 
(1+\log p_{D,\cD})^{3} \sum_{\{D' \in \cD: K_{D'} \cap K_D \neq \emptyset\}}  e_{D'}(w) \quad \forall  D \in \cD,\,w \in H^1(\Omega).
\end{split}
\end{equation}

In \cite[Thms. 6.2 and 6.5]{18.64}, it was shown that on an interval $I$ of length $\eqsim 1$, it holds that
\begin{align} \label{200}
\|z\|_{L_\infty(I)} & \lesssim (1+\log p)^{\frac{1}{2}} \|z\|_{H^{\frac{1}{2}}(I)}\quad \forall  z \in \mathbb{P}_p(I),\\ \label{201}
\|z\|_{H^{\frac{1}{2}}_{00}(I)} & \lesssim (1+\log p) \|z\|_{H^{\frac{1}{2}}(I)}\quad \forall  z \in \mathbb{P}_p(I) \cap H^1_0(I).
\end{align}
These estimates will be used hereafter.
\begin{lemma} \label{203} For $\nu \in \cN(\cD) \cap \partial K_D$, $e,e'\in \cE(\cD)$ with $e \cap e' = \nu$, we have
$$
|(\bar{Q}_{e} w|_{e}-\bar{Q}_{e'} w|_{e'})(\nu)|^2 \lesssim \sum_{\{D'\in \cD\colon K_{D'} \ni \nu\}} (1+\log p_{D'}) e_{D'}(w) \qquad \forall w \in H^1(\Omega).
$$
\end{lemma}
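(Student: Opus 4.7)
The plan is to estimate $(\bar Q_e w|_e - \bar Q_{e'} w|_{e'})(\nu)$ by splitting it into three contributions: two ``projection errors'' localised to the edges $e$ and $e'$, and one ``jump'' term comparing the traces of the local projectors $w_{D'}$ across triangles meeting at $\nu$. More precisely, choose a triangle $D^*\in\cD$ with $e\subset \partial K_{D^*}$ and $p_{D^*}=\bar p_{e,\cD}$, and a triangle $D^{**}\in\cD$ with $e'\subset \partial K_{D^{**}}$ and $p_{D^{**}}=\bar p_{e',\cD}$; both contain $\nu$. Write
\begin{align*}
(\bar Q_e w|_e - \bar Q_{e'} w|_{e'})(\nu)
&= \bigl(\bar Q_e w|_e - w_{D^*}|_e\bigr)(\nu) \\
&\quad + \bigl(w_{D^*} - w_{D^{**}}\bigr)(\nu) + \bigl(w_{D^{**}}|_{e'} - \bar Q_{e'} w|_{e'}\bigr)(\nu).
\end{align*}

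For the first term, since $w_{D^*}|_e\in\mathbb{P}_{\bar p_{e,\cD}}(e)$, the projector $\bar Q_e$ leaves it invariant, so the term equals $\bar Q_e((w-w_{D^*})|_e)(\nu)$. Apply the polynomial inverse inequality \eqref{200} on $e$ (whose length is $\eqsim 1$ by shape regularity, given $|K_D|=1$) to the polynomial $\bar Q_e((w-w_{D^*})|_e)\in \mathbb{P}_{\bar p_{e,\cD}}(e)$; then use that $\bar Q_e$ is $H^{1/2}$-orthogonal (hence of norm one), the standard trace inequality $H^{1/2}(\partial K_{D^*})\hookleftarrow H^1(K_{D^*})$, and the mean-value constraint in \eqref{210} plus Poincaré--Wirtinger, to obtain
\[
|\bar Q_e((w-w_{D^*})|_e)(\nu)|^2 \lesssim (1+\log p_{D^*})\,\|w-w_{D^*}\|_{H^1(K_{D^*})}^2 \lesssim (1+\log p_{D^*})\,e_{D^*}(w).
\]
The third term is treated analogously with $D^{**}$ in place of $D^*$.

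For the middle ``jump'' term, enumerate the triangles of $\cD$ meeting at $\nu$ in cyclic order so that $D^*=D_1,D_2,\dots,D_k=D^{**}$ with $D_j$ and $D_{j+1}$ sharing an edge $f_j\ni \nu$; shape regularity bounds $k$ by an absolute constant. Telescoping,
\[
\bigl(w_{D^*}-w_{D^{**}}\bigr)(\nu) = \sum_{j=1}^{k-1} \bigl((w_{D_j}-w_{D_{j+1}})|_{f_j}\bigr)(\nu),
\]
and each summand, being the value of a polynomial on $f_j$ of degree $\max(p_{D_j},p_{D_{j+1}})$, is bounded via \eqref{200} by $(1+\log\max(p_{D_j},p_{D_{j+1}}))^{1/2}$ times its $H^{1/2}(f_j)$-norm. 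The latter is in turn controlled, by the triangle inequality, the trace inequality on $K_{D_j}$ and $K_{D_{j+1}}$, and Poincaré--Wirtinger applied using the mean-value constraint, by $e_{D_j}(w)^{1/2}+e_{D_{j+1}}(w)^{1/2}$. Squaring, summing the three contributions, and using that each triangle in the sum has $p_{D'}\le \max_{j}p_{D_j}$ combined with $(1+\log p_{D^*}),(1+\log p_{D^{**}})$ being among the $(1+\log p_{D'})$ on the right-hand side yields the claim. The main subtlety is ensuring that all the local constants depend only on shape regularity (not on the local mesh sizes nor on the polynomial degrees beyond the stated logarithmic factors), which follows from the scale invariance of the $H^{1/2}$ norm and the shape regularity inherited from newest vertex bisection.
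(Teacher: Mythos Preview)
Your proof is correct and follows essentially the same route as the paper: telescope around the vertex $\nu$ through the chain of adjacent triangles, and control each resulting term via the polynomial $L^\infty$--$H^{1/2}$ inequality \eqref{200}, the trace theorem, and Poincar\'e--Wirtinger (exploiting the mean-value constraint in \eqref{210}). The only cosmetic difference is that the paper keeps the projectors $\bar Q_{e_i}$ in every telescoping step (using $\bar Q_{e_i} w_{D_i}|_{e_i}(\nu)=\bar Q_{e_{i-1}} w_{D_i}|_{e_{i-1}}(\nu)$), whereas you peel off the two endpoint projection errors first and then telescope the polynomial point values $w_{D_j}(\nu)$ directly; both organisations lead to the same bound.
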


\begin{proof} Consider the notations as in Figure~\ref{fig1}.
\begin{figure}
\begin{center}
\input{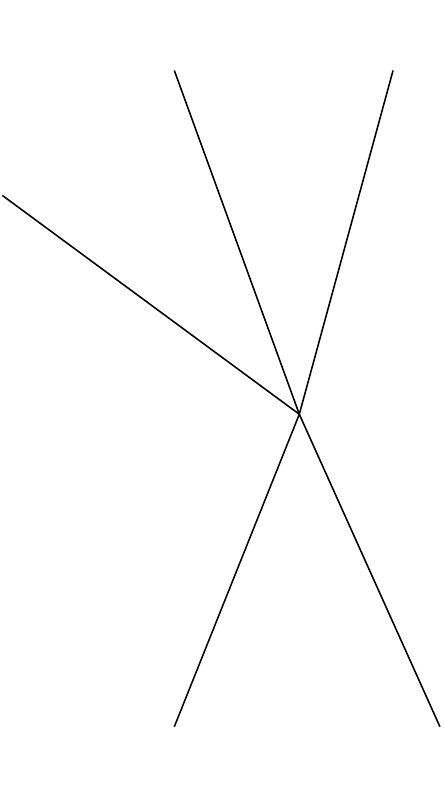_t}
\end{center}
\caption{Notations relative to the proof of Lemma~\ref{203}}
\label{fig1}
\end{figure}
Using that for $1 \leq i \leq n$, $(\bar{Q}_{e_i} w_{D_i}|_{e_i})(\nu)=(\bar{Q}_{e_{i-1}} w_{D_i}|_{e_{i-1}})(\nu)$,
we have
\begin{align*}
&(\bar{Q}_{e_n} w|_{e_n}-\bar{Q}_{e_0} w|_{e_0})(\nu)=\\
&\Big(\bar{Q}_{e_n} (w-w_{K_{D_n}})|_{e_n}+
\sum_{i=1}^{n-1} \bar{Q}_{e_i} (w_{K_{D_{i+1}}}-w+w-w_{K_{D_{i}}})|_{e_i}+
\bar{Q}_{e_0} (w_{K_{D_1}}-w)|_{e_0}\Big)(\nu),
\end{align*}
and so, using \eqref{200} and the trace inequality,
$$
|(\bar{Q}_{e_n} w|_{e_n}-\bar{Q}_{e_0} w|_{e_0})(\nu)| \lesssim \sum_{i=1}^n\big[(1+\log \bar{p}_{e_{i-1}})^{\frac{1}{2}} +(1+\log \bar{p}_{e_{i}})^{\frac{1}{2}}\big] e_{D_i}(w)^{\frac{1}{2}}. 
$$
\end{proof}

We continue with the proof of Theorem~\ref{th2}.
As a first application of this lemma, we show that it suffices to prove \eqref{e14} with $\Pi_{\partial\cD}$ reading as $\bar{\Pi}_{\partial\cD}$. To this end, for $e \in \cE(\cD) \cap \partial K_D$, let $D' \in \cD$ such that $e \subset \partial K_{D'}$ and $p_{e,\cD}=p_{D'}$.
Then
\begin{align*}
(\Pi_{\partial\cD} w|_{\partial\cK(\cD)})|_e-(\bar{\Pi}_{\partial\cD} w|_{\partial\cK(\cD)})|_e=
(Q_{0,e}-\bar{Q}_{0,e})\Big(w|_e-\sum_{i=1}^2(\bar{Q}_{e_{\n_{i,e}}} w|_{e_{\n_{i,e}}})(\n_{i,e})  \phi_{\n_{i,e}}|_e\Big)\\
=
(Q_{0,e}-\bar{Q}_{0,e})\Big(w|_e-w_{D'}|_{e}-\sum_{i=1}^2(\bar{Q}_{e_{\n_{i,e}}} w|_{e_{\n_{i,e}}}-w_{D'}|_{e})(\n_{i,e}) \phi_{\n_{i,e}}|_e\Big).
\end{align*}
From \eqref{201}, the trace theorem and the property $\|  \phi_{\n_{i,e}}\|_{H^{\frac{1}{2}}(e)} \lesssim1$, we infer that
\begin{equation} \label{204}
\begin{split}
\|(\Pi_{\partial\cD} w|_{\partial\cK(\cD})|_e-&(\bar{\Pi}_{\partial\cD} w|_{\partial\cK(\cD})|_e\|_{H^{\frac{1}{2}}_{00}(e)}
\\
&\lesssim (1+\log \bar{p}_{e,\cD}) \Big(e_{D'}(w)^{\frac{1}{2}}+\sum_{i=1}^2 |(\bar{Q}_{e_{\n_{i,e}}} w|_{e_{\n_{i,e}}}-w_{D'}|_{e})(\n_{i,e})|\Big).
\end{split}
\end{equation}
Writing
$$
\bar{Q}_{e_{\n_{i,e}}} w|_{e_{\n_{i,e}}}-w_{D'}|_{e}=
\bar{Q}_{e_{\n_{i,e}}} w|_{e_{\n_{i,e}}}-\bar{Q}_e w|_e+\bar{Q}_e (w|_e-w_{D'}|_{e}),
$$
and applying \eqref{200} as well as the trace theorem, shows that
\begin{equation} \label{205}
\begin{split}
|(\bar{Q}_{e_{\n_{i,e}}} w|_{e_{\n_{i,e}}}-&w_{D'}|_{e})(\n_{i,e})| \\
&\lesssim 
|(\bar{Q}_{e_{\n_{i,e}}} w|_{e_{\n_{i,e}}}-\bar{Q}_e w|_e)(\n_{i,e})|+(1+\log(\bar{p}_{e,\cD}))^{\frac{1}{2}}e_{D'}(w)^{\frac{1}{2}}.
\end{split}
\end{equation}
By combining \eqref{204} and \eqref{205}, and applying Lemma~\ref{203} to the first term on the right-hand side of \eqref{205}, we conclude that
\begin{equation} \label{206}
\begin{split}
\|\big((\Pi_{\partial\cD}-&\bar{\Pi}_{\partial\cD}) w|_{\partial \cK(\cD)}\big)|_{\partial K_D}\|^2_{H^{\frac{1}{2}}(\partial K_D)}\\
& \lesssim 
(1+\log p_{D,\cD})^{3} \sum_{\{D' \in \cD: K_{D'} \cap K_D \neq \emptyset\}}  e_{D'}(w) \quad \forall  D \in \cD,\,w \in H^1(\Omega).
\end{split}
\end{equation}

As a consequence, what remains to show is \eqref{e14} with $\Pi_{\partial\cD}$ reading as $\bar{\Pi}_{\partial\cD}$, that is, to show that
\begin{equation} \label{212}
\begin{split}
\|(\bar{\Pi}_{\partial\cD} &w|_{\partial \cK(\cD)})|_{\partial K_D}-w_D|_{\partial K_D}\|^2_{H^{\frac{1}{2}}(\partial K_D)}\\
& \lesssim 
(1+\log p_{D,\cD})^{3} \sum_{\{D' \in \cD: K_{D'} \cap K_D \neq \emptyset\}}  e_{D'}(w) \quad \forall  D \in \cD,\,w \in H^1(\Omega).
\end{split}
\end{equation}

Let us first consider the situation that $e_{\n} \subset \partial {K_D}$ for all  $\n \in \cN(\cD) \cap \partial K_D$.
Then $((I-\bar{\Pi}_{\partial\cD}) w_D|_{\partial \cK(\cD)})|_{\partial K_D}=0$ (this is generally not true for $\Pi_{\partial\cD}$), and so
\begin{equation} \label{207}
\|(\bar{\Pi}_{\partial\cD} w|_{\partial \cK(\cD)})|_{\partial K_D}- w_D|_{\partial K_D}\|_{H^{\frac{1}{2}}(\partial K_D)} =\|(\bar{\Pi}_{\partial\cD}(w-w_D)|_{\partial K_D})|_{\partial K_D}\|_{H^{\frac{1}{2}}(\partial K_D)}.
\end{equation}

To bound the right-hand side, let us write $v=(w-w_D)|_{\partial K_D}$.
For edges $e_1,e_2$ of $\partial K_D$, and $\n:=e_1 \cap e_2$, an application of \eqref{200} shows that
\begin{equation} \label{208}
\|(\bar{Q}_{e_{\n}} v_{e_{\n}})(\n)  \phi_{\n}\|_{H^{\frac{1}{2}}(\partial K_D))} 
\lesssim (1+\log \bar{p}_{e_{\n},\cD})^{\frac{1}{2}} \|v_{e_{\n}}\|_{H^{\frac{1}{2}}(e_{\n})}.
\end{equation}

For an edge $e \subset \partial K_D$, applications of \eqref{201} and \eqref{200} show that
\begin{equation} \label{209}
\begin{split}
&\|\bar{Q}_{0,e}\Big(v|_e-\sum_{i=1}^2(\bar{Q}_{e_{\n_{e,i}}} v|_{e_{\n_{e,i}}})(\n_{e,i})  \phi_{\n_{e,i}}|_e\Big)\|_{H_{00}^{\frac{1}{2}}(e)}
\\& \lesssim (1+\log \bar{p}_{e,\cD})
\|\bar{Q}_{0,e}\Big(v|_e-\sum_{i=1}^2(\bar{Q}_{e_{\n_{e,i}}}v|_{e_{\n_{e,i}}})(\n_{e,i})  \phi_{\n_{e,i}}|_e\Big)\|_{H^{\frac{1}{2}}(e)} \\
&\lesssim (1+\log \bar{p}_{e,\cD})
\Big(\|v|_{e}\|_{H^{\frac{1}{2}}(e)}+\max_{i=1,2}(1+\log \bar{p}_{e_{\n_{e,i}},\cD})^{\frac{1}{2}} \|v|_{e_{\n_{e,i}}}\|_{H^{\frac{1}{2}}(e_{\n_{e,i}})}\Big).
\end{split}
\end{equation}
Combination of \eqref{207}, \eqref{208}, and \eqref{209}, together with an application of the trace theorem, show that, in the situation of $e_{\n} \subset \partial {K_D}$ for all  $\n \in \cN(\cD) \cap \partial K_D$,
$$
\|(\bar{\Pi}_{\partial\cD} w|_{\partial K_D})|_{\partial K_D}-w_D|_{\partial K_D}\|^2_{H^{\frac{1}{2}}(\partial K_D)}\\
 \lesssim 
(1+\log p_{D,\cD})^{3}  e_{D}(w) \quad \forall  w \in H^1(K_D),
$$
which implies \eqref{212}.

Consider now the situation that for one (or similarly more) $\n \in \cN(\cD) \cap \partial K_D$, $e_{\n} \not\subset \partial K_D$.
We estimate the difference, in $H^{\frac{1}{2}}(\partial K_D)$-norm, with the situation that $e_{\n}$ is equal to some edge $\bar{e}  \subset \partial K_D$. 
Applications of  \eqref{201} and Lemma~\ref{203} show that 
\begin{equation*}
\begin{split}
\|\sum_{\{e \in \cE(\cD) \cap \partial K_D: e \ni \n\}} &(I-\bar{Q}_{0,e})\big((\bar{Q}_{e_{\n}} w|_{e_{\n}}-\bar{Q}_{\bar{e}} w|_{\bar{e}})(\n) \phi_{\n}|_e\big)\|_{H^{\frac{1}{2}}(\partial K_D)}\\
&\lesssim 
\big(1 +\sum_{\{e \in \cE(\cD) \cap \partial K_D: e \ni \n\}} (1+\log \bar{p}_{e,\cD})\big) |(\bar{Q}_{e_{\n}} w|_{e_{\n}}-\bar{Q}_{\bar{e}} w|_{\bar{e}})(\n)|\\
& \lesssim 
(1+\log p_{D,\cD})^{\frac{3}{2}} \sum_{\{D' \in \cD: K_{D'} \cap K_D \neq \emptyset\}}  e_{D'}(w)^{\frac{1}{2}},
\end{split}
\end{equation*}
which completes the proof of \eqref{212}, and thus of the theorem.
\end{proof}

\subsection{The routine {\Reduce}} \label{SubPDE}
For $\cD \in {\mathbb{D}}^c$, with $w_\cD$ we will denote the best approximation to $w$ from $V^c_\cD=V_\cD \cap H^1_0(\Omega)$ w.r.t. $|\cdot|_{H^1(\Omega)}$. For $w=u(f)$, being the solution of the Poisson problem with right-hand side $f$, $u_\cD(f)$ turns out to be the Galerkin approximation to $u(f)$ from $V^c_\cD$.

In this section, we will apply results from \cite{MW01} on residual based a posteriori error estimators in the $hp$ context.
These results were derived under the condition that the polynomial degrees $p_D$ and $p_D'$ for $D,D' \in \cD \in \mathbb{D}^c$ with $K_D \cap K_{D'} \neq \emptyset$ differ not more than an arbitrary, but constant factor.
Fixing such a factor, let $\breve{\mathbb{D}}^c$ denote the subset of those $\cD \in {\mathbb{D}}^c$ that satisfy this condition. Obviously, for each $\cD \in {\mathbb{D}}^c$, there exists a $\breve{\cD} \in \breve{\mathbb{D}}^c$ with $\cK(\breve{\cD})=\cK(\cD)$ and $\breve{\cD} \geq \cD$. Unfortunately, even for the smallest possible of such $\breve{\cD}$, let us write it as $\breve{\cD}(\cD)$, the ratio $\# \breve{\cD}(\cD) / \# \cD$ cannot be bounded uniformly in $\cD \in {\mathbb{D}}^c$.

In view of the replacement of ${\mathbb{D}}^c$ by $\breve{\mathbb{D}}^c$, 
the mapping $\cC: \mathbb{D} \rightarrow \mathbb{D}^c$ constructed in the previous subsection has to be replaced by
$\breve{\cC}:=\mathbb{D} \rightarrow \breve{\mathbb{D}}^c:\cD \mapsto \breve{\cD}(\cC(\cD))$.
From now on, we will denote $\breve{\mathbb{D}}^c$ as ${\mathbb{D}}^c$, and $\breve{\cC}$ as $\cC$.
Since obviously $\breve{\cD}$ can be constructed such that $\|p_{\breve{\cD}}\|_\infty=\|p_{\cD}\|_\infty$, with these new definitions \eqref{4} is still valid with $C_{3,\cD} \eqsim (1+\log(\|p_\cD\|_\infty))^{\frac{3}{2}}$, and, as before, unfortunately $\sup_{\cD \in \mathbb{D}} \# \cC(\cD) / \# \cD=\infty$.

\medskip 

{We note that in the present application}, for $\cD \in \mathbb{D}^c$, $f_\cD \in F_\cD$, and a desired reduction factor $\varrho\in (0,1]$, 
$\Reduce(\varrho,\cD,f_{\cD})$ has to produce a $\cD \leq \bar{\cD} \in \mathbb{D}^c$ such that $|u(f_\cD)-u_{\bar{\cD}}(f_\cD)|_{H^1(\Omega)} \leq \varrho |u(f_\cD)-u_{\cD}(f_\cD)|_{H^1(\Omega)}$.
As explained in Section~\ref{cost}, {the $i$-th iteration of \HPAFEM performs a call} of $\Reduce(\frac{\mu}{1+(C_1+C_{3,\cD_i})\omega},\cC(\cD_i),f_{\cD_i})$. The scalars $\mu$ and $\omega$ are parameters as set in \HPAFEM. They depend on the constant $b$ from \Hpnearbest, cf.~Sect.\ref{S:2.2}, the constant $C_2$, here being equal to $1$, cf. Proposition~\ref{prop100}, and the constant $C_1$, here being $\eqsim \sqrt{\delta}$, see Proposition~\ref{prop11}. The scalar $\delta$ is a parameter in the definition of the error functional $E$, see \eqref{globalerr}, that is chosen such that $C_1 C_2<b$, cf. \eqref{0}. 
The only possible dependence of the required reduction factor  $\frac{\mu}{1+(C_1+C_{3,\cD_i})\omega}$ on $\cD_i$ is via the value of $C_{3,\cD_i}$.
As we have seen, $C_{3,\cD_i}\eqsim (1+\log \|\bar{p}_{\cD_i}\|_\infty)^{\frac{3}{2}}$, meaning that when the maximum polynomial degree in $\cD_i$ tends to infinity, this reduction factor tends to zero, but only very slowly.
 \medskip

The construction of $\Reduce$ will follow the general template given in Sect.~\ref{reduce}. We will verify the assumptions \eqref{260}, \eqref{27}, and \eqref{28}.
For $(w,f) \in H^1_0(\Omega) \times L^2(\Omega)$, $\cD \in \mathbb{D}^c$, and $D \in \cD$, we set the residual based (squared) a posteriori error indicator
$$
\eta^2_{D,\cD}(w,f):=\frac{|K_D|}{p_D^2}\|f+\triangle w\|_{L^2(K_D)}^2+\sum_{\{e \in  \cE(\cD): e \subset \partial K_D \cap \Omega\}} \frac{|e|}{2 p_{e,\cD}} \|\llbracket \nabla w \cdot {\bf n}_e\rrbracket\|_{L^2(e)}^2,
$$
where $p_{e,\cD}$ is from \eqref{15}, and define
$$
\est_\cD(w,f_\cD):=\left(\sum_{D \in \cD} \eta^2_{D,\cD}(w,f_\cD)\right)^{1/2}.
$$

The following theorem stems from \cite[Theorem~3.6]{MW01}. Inspection of the proof given therein shows that the local lower bound provided by the (squared) a posteriori error indicator applies to any $w \in V^c_\cD$ and so not only to the Galerkin solution.

\begin{theorem}[`reliability and efficiency'] \label{th10} There exists a constant $R>0$ such that for $\cD \in \mathbb{D}^c$ and $f_\cD \in F_\cD$,
\begin{equation} \label{14}
|u(f_\cD)-u_\cD(f_\cD)|^2_{H^1(\Omega)} \leq R \est_\cD^{2}(u_\cD(f_\cD),f_\cD).
\end{equation}
For any $\eps>0$, and all $\cD \in \mathbb{D}^c$, there exists an $r_{\cD,\eps} \eqsim \|p_\cD\|_{\infty}^{-2-2\eps}$, such that for all $f_\cD \in F_\cD$, and $w \in V^c_\cD$,
\begin{equation} \label{17}
r_{\cD,\eps}  \est_\cD^{2}(w,f_\cD)\leq  |u(f_\cD)-w|^2_{H^1(\Omega)}.
\end{equation}
\end{theorem}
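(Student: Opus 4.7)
The plan is to invoke the residual-based $hp$ a posteriori error analysis of Melenk and Wohlmuth~\cite{MW01}, while making explicit the point that their local lower bound relies only on the equation satisfied by the discretisation error $u(f_\cD)-w$ tested against bubble functions, and never on Galerkin orthogonality of $w$ against $V^c_\cD$. Consequently the efficiency estimate \eqref{17} is valid for every $w \in V^c_\cD$, not only for the Galerkin solution $u_\cD(f_\cD)$.

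For reliability \eqref{14}, I would set $\psi := u(f_\cD) - u_\cD(f_\cD)$, use Galerkin orthogonality to subtract an arbitrary $v \in V^c_\cD$, and integrate by parts elementwise to obtain
$$|\psi|_{H^1(\Omega)}^2 = \sum_{D \in \cD} (f_\cD + \triangle u_\cD(f_\cD), \psi - v)_{L^2(K_D)} - \sum_{\{e \in \cE(\cD): e \subset \Omega\}} (\llbracket \nabla u_\cD(f_\cD) \cdot {\bf n}_e \rrbracket, \psi - v)_{L^2(e)}.$$
Choosing $v$ to be the $hp$-Cl\'ement/Scott--Zhang quasi-interpolant of $\psi$ built in~\cite{MW01}, whose local $L^2$-approximation constants are $\lesssim h_D/p_D$ on elements and $\lesssim (h_e/p_{e,\cD})^{1/2}$ on edges, with constants independent of $p$, and then applying Cauchy--Schwarz together with a finite-overlap patch argument, the right-hand side is bounded by $\est_\cD(u_\cD(f_\cD),f_\cD)\,|\psi|_{H^1(\Omega)}$; cancelling one factor of $|\psi|_{H^1(\Omega)}$ yields \eqref{14} with $R$ independent of $\cD$. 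The restriction to $\cD \in \mathbb{D}^c = \breve{\mathbb{D}}^c$, where neighbouring polynomial degrees are comparable, is precisely what makes these quasi-interpolation bounds $p$-uniform.

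For efficiency \eqref{17}, I would employ the $hp$-weighted element and edge bubble functions of~\cite{MW01}. For each $K_D$, let $b_{K_D}$ be an element bubble supported in $K_D$; for each interior edge $e$, let $b_e$ be an edge bubble supported on the union of the two adjacent triangles. The key ingredients are the weighted norm equivalences and inverse inequalities of~\cite{MW01}, which for any fixed $\eps>0$ lose at most a factor $p^{1+\eps}$ per use. Testing the residual equation for $u(f_\cD)-w$ against $v = b_{K_D}(f_\cD + \triangle w)$ and $v = b_e \llbracket \nabla w \cdot {\bf n}_e \rrbracket$ (extended to the edge patch), one bounds each local contribution to $\eta^2_{D,\cD}(w,f_\cD)$ by $|u(f_\cD)-w|^2_{H^1(\omega_D)}$ times a factor $\lesssim p_D^{2+2\eps}$, where $\omega_D$ is the edge patch of $K_D$. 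Since only the residual equation for $u(f_\cD)-w$ and locally supported polynomial test functions are used, the argument applies verbatim for every $w \in V^c_\cD$. Summing with bounded overlap and taking a maximum over $p_D$ produces \eqref{17} with $r_{\cD,\eps} \eqsim \|p_\cD\|_\infty^{-2-2\eps}$.

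The main obstacle is precisely the $p$-dependence in the bubble-based inverse inequalities, which is the well-documented bottleneck of residual estimators in the $hp$ setting and is the source of the sub-optimal factor $\|p_\cD\|_\infty^{-2-2\eps}$ in \eqref{17}. This in turn is the cause of the degraded reduction factor of \Reduce and of the non-uniform growth in the number of iterations needed to achieve a fixed error reduction, as discussed in the introduction.
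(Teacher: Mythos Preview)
Your proposal is correct and follows the same route as the paper: the paper does not give an independent proof but simply attributes the result to \cite[Theorem~3.6]{MW01}, adding only the remark that ``inspection of the proof given therein shows that the local lower bound provided by the (squared) a posteriori error indicator applies to any $w \in V^c_\cD$ and so not only to the Galerkin solution.'' Your write-up is a faithful expansion of precisely that argument, including the key observation that the bubble-function lower bounds use only the residual equation and not Galerkin orthogonality.
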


\begin{corollary}[`stability'] \label{corol1} With $r_{\cD,\eps}$ as in Theorem~\ref{th10}, for all $\cD \in \mathbb{D}^c$, $f_\cD \in F_\cD$, and $v, w \in V^c_\cD$, it holds that
\begin{equation} \label{180}
\sqrt{r_{\cD,\eps}}  \Big|{\est_\cD(v,f_\cD)}-{\est_\cD(w,f_\cD)}\Big |\leq  |v-w|_{H^1(\Omega)}.
\end{equation}
\end{corollary}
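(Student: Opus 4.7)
The plan is to reduce the stability estimate to an application of the (discrete) efficiency bound \eqref{17} with vanishing data, via an $\ell^2$-triangle inequality built from the structure of the residual indicator. The observation is that $\est_\cD(\cdot,f_\cD)$ behaves like a seminorm with an affine shift: the element residual $f_\cD+\triangle v$ is affine in $v$ with linear part $\triangle(v-w)$, and the interelement jump $\llbracket \nabla v\cdot\mathbf{n}_e\rrbracket$ is linear in $v$. Thus the natural ``subtraction'' within the estimator produces exactly $\est_\cD(v-w,0)$, which is again of the form covered by \eqref{17} because $0\in F_\cD$.

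First I would establish the triangle-type bound
\[
|\est_\cD(v,f_\cD)-\est_\cD(w,f_\cD)|\;\le\;\est_\cD(v-w,0)\qquad\forall v,w\in V_\cD^c.
\]
For each element $D$ the scalar triangle inequality in $L^2(K_D)$ gives
$\bigl|\|f_\cD+\triangle v\|_{L^2(K_D)}-\|f_\cD+\triangle w\|_{L^2(K_D)}\bigr|\le \|\triangle(v-w)\|_{L^2(K_D)}$,
and analogously for each interior edge contribution
$\bigl|\|\llbracket\nabla v\cdot\mathbf{n}_e\rrbracket\|_{L^2(e)}-\|\llbracket\nabla w\cdot\mathbf{n}_e\rrbracket\|_{L^2(e)}\bigr|\le \|\llbracket\nabla(v-w)\cdot\mathbf{n}_e\rrbracket\|_{L^2(e)}$.
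Viewing $\est_\cD(\cdot,f_\cD)$ as the weighted $\ell^2$-norm (with weights $|K_D|/p_D^2$ and $|e|/(2p_{e,\cD})$) of the vector of these quantities, the reverse triangle inequality in that weighted $\ell^2$-space yields the displayed bound.

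Next I would apply the efficiency estimate \eqref{17} of Theorem~\ref{th10} with data $f_\cD$ replaced by $0\in F_\cD$ and the argument replaced by $v-w\in V_\cD^c$. Since $u(0)=0$, this gives
\[
r_{\cD,\eps}\,\est_\cD^2(v-w,0)\;\le\;|u(0)-(v-w)|_{H^1(\Omega)}^2\;=\;|v-w|_{H^1(\Omega)}^2.
\]
Combining the two displayed bounds and taking square roots delivers \eqref{180}.

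There is essentially no obstacle beyond being careful that the inequality \eqref{17} truly applies to an arbitrary element of $V_\cD^c$ (as is explicitly stated in Theorem~\ref{th10}) and that $0\in F_\cD$ and $v-w\in V_\cD^c$ (which follow from $F_\cD$ being a linear space and $V_\cD^c=V_\cD\cap H^1_0(\Omega)$ being a linear subspace). The only subtle point is the $\ell^2$-triangle step; writing the indicator in terms of the map $v\mapsto \bigl(\|f_\cD+\triangle v\|_{L^2(K_D)},\,\|\llbracket\nabla v\cdot\mathbf{n}_e\rrbracket\|_{L^2(e)}\bigr)$ and exploiting the affine/linear dependence on $v$ makes this transparent.
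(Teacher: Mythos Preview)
Your proof is correct and follows essentially the same approach as the paper: a reverse triangle inequality in the weighted $\ell^2$-space combined with componentwise $L^2$ triangle inequalities to obtain $|\est_\cD(v,f_\cD)-\est_\cD(w,f_\cD)|\le \est_\cD(v-w,0)$, followed by an application of \eqref{17} with $f_\cD=0$ and argument $v-w$ (using $u(0)=0$). The paper phrases the first step as ``a repeated application of the triangle inequality, first in $\ell_2$ sequence spaces and then in $L^2$ function spaces,'' but the content is identical to yours.
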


\begin{proof} A repeated application of the triangle inequality, first in $\ell_2$ sequence spaces and then in $L^2$ function spaces, shows that
\begin{align*}
& |{\est_\cD(v,f_\cD)}-{\est_\cD(w,f_\cD)}| \\
&\leq 
\left(\sum_{D \in \cD} \frac{|K_D|}{p_D^2}\|\triangle (v-w)\|_{L^2(K_D)}^2+\sum_{\{e \in  \cE(\cD): e \subset \partial K_D \cap \Omega\}} \frac{|e|}{2 p_{e,\cD}} \|\llbracket \nabla (v-w) \cdot {\bf n}_e\rrbracket\|_{L^2(e)}^2\right)^{\frac12}\\
& \leq r_{\cD,\eps}^{-\frac{1}{2}}  |v-w|_{H^1(\Omega)},
\end{align*}
where the last inequality follows from an application of \eqref{17} with ``$f_\cD$'' reading as $0$, and ``$w$'' reading as $v-w$.
\end{proof}

What is left is to establish the `estimator reduction by refinement', i.e. \eqref{28}.
Given $\cM \subset \cD \in \mathbb{D}^c$, we define $\bar{\cD}(\cM) \in \mathbb{D}^c$ as follows:
The $h$-partition $\cK(\bar{\cD}(\cM))$ is the smallest in $\mathbb{K}^c$ in which each $K_D$ for $D \in \cM$ has been replaced by its four grandchildren in $\mathfrak{K}$; and for $D \in \bar{\cD}(\cM)$, it holds that $p_D=p_{D'}$ where $D' \in \cD$ is such that $K_{D'}$ be either equal to $K_D$, or its ancestor in $\cK(\cD)$.

\begin{proposition}[`estimator reduction by refinement'] \label{lem1} For $\cM \subset \cD \in \mathbb{D}^c$, and $\bar{\cD}(\cM) \in \mathbb{D}^c$ defined above, it holds that $\# \bar{\cD}(\cM) \lesssim \# \cD$.
For any $f_\cD \in F_{\cD}$, the estimator reduction property \eqref{28} is valid for $\gamma=\frac{1}{2}$.
\end{proposition}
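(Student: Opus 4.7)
The plan rests on two observations: (i) in 2D newest vertex bisection, two consecutive bisections of a triangle $K_D$ produce four grandchildren each of area $|K_D|/4$ and simultaneously split each of the three original edges of $K_D$ exactly in half; and (ii) since $u_\cD|_{K_D} \in \mathbb{P}_{p_D}(K_D)$ is a single polynomial, the gradient jump $\llbracket \nabla u_\cD \cdot {\bf n}\rrbracket$ vanishes identically across any edge introduced in the interior of $K_D$, whether by the two bisections or by the subsequent conforming closure of a neighbour.

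For the cardinality estimate, I would invoke the standard NVB complexity bound from \cite[Lemma~2.5]{BDD04}: the conforming closure inflates the $h$-cardinality by at most a constant factor. Since $p_{\bar D}=p_D$ for every $\bar D \in \bar\cD(\cM)$ with $K_{\bar D}\subset K_D$, and $d_{\bar D}\eqsim p_{\bar D}^2$, summing $d_{\bar D}$ gives $\#\bar\cD(\cM)\lesssim \#\cD$ at once.

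For the first inequality in \eqref{28}, fix $D\in\cM$ and sum $\eta_{\bar D,\bar\cD(\cM)}^2(u_\cD,f_\cD)$ over the four grandchildren $\bar D\subset K_D$. Because $u_\cD|_{K_D}$ and $p_D$ are unchanged, the volume residuals add up to $\tfrac14 \cdot \tfrac{|K_D|}{p_D^2}\|f_\cD+\triangle u_\cD\|_{L^2(K_D)}^2$. For the jump residuals, observation (ii) kills the contributions from edges interior to $K_D$; each sub-edge $\bar e\subset e$ on $\partial K_D$ with $e\in\cE(\cD)$ satisfies $|\bar e|=|e|/2$ by (i) and $p_{\bar e,\bar\cD(\cM)}\ge p_{e,\cD}$ since polynomial degrees are inherited. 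Combined with the tiling identity $\sum_{\bar e\subset e}\|\llbracket\nabla u_\cD \cdot {\bf n}\rrbracket\|_{L^2(\bar e)}^2 = \|\llbracket\nabla u_\cD \cdot {\bf n}\rrbracket\|_{L^2(e)}^2$, this bounds the edge contribution by $\tfrac12$ of the edge part of $\eta_{D,\cD}^2$. Adding the volume and edge bounds gives $\sum_{\bar D\subset K_D}\eta_{\bar D,\bar\cD(\cM)}^2 \le \tfrac12\,\eta_{D,\cD}^2$, so summing over $D\in\cM$ yields the first inequality of \eqref{28} with $\gamma=\tfrac12$.

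For the second inequality in \eqref{28}, each $\bar D \in \bar\cD(\cM)\setminus \bar{\mathcal S}$ satisfies $K_{\bar D}\subset K_D$ for a unique $D\in\cD\setminus\cM$, with $p_{\bar D}=p_D$ (the inclusion being equality unless conforming closure refined $K_D$). Volume contributions inside a given $K_D$ recombine exactly to the volume part of $\eta_{D,\cD}^2$, and edges interior to $K_D$ carry zero jump by (ii). For each original edge $e\subset \partial K_D\cap\Omega$, its tiling $\{\bar e_j\}$ satisfies $|\bar e_j|\le|e|$ and $p_{\bar e_j,\bar\cD(\cM)}=p_{e,\cD}$, whence $\sum_j \tfrac{|\bar e_j|}{2p_{\bar e_j,\bar\cD(\cM)}}\|\llbracket\nabla u_\cD \cdot {\bf n}\rrbracket\|_{L^2(\bar e_j)}^2 \le \tfrac{|e|}{2p_{e,\cD}}\|\llbracket\nabla u_\cD \cdot {\bf n}\rrbracket\|_{L^2(e)}^2$. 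Summing over $D\in\cD\setminus\cM$ and noting that interior-face multiplicities match on both sides completes the second inequality. The main obstacle is the NVB bookkeeping underlying (i); once this is settled, everything reduces to term-by-term estimates exploiting the polynomiality of $u_\cD|_{K_D}$ and the preservation of polynomial degrees under refinement.
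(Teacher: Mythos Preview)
Your argument is correct and mirrors the paper's (terse) proof: the four equal-area grandchildren give the factor $\tfrac14$ on the volume term, the halved boundary edges give $\tfrac12$ on the jump term, and the vanishing jumps across newly created interior edges kill the remaining contributions, yielding $\gamma=\tfrac12$. Two minor quibbles: for $\#\bar\cD(\cM)\lesssim\#\cD$ you actually need that each individual $K_D$ is split into a \emph{bounded} number of pieces by the closure---a standard local property of one NVB refine-and-close step from a conforming mesh, but not the content of \cite[Lemma~2.5]{BDD04}, which bounds only the \emph{total} $h$-cardinality; and for unmarked $K_D$ that do get refined by closure, the volume contributions do not ``recombine exactly'' (the weights $|K_{\bar D}|$ are strictly smaller than $|K_D|$) but are merely bounded above by the original volume term, which is all that is required.
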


\begin{proof} This follows easily from the fact that each $K_D$ ($D \in \cM$) is subdivided into four subtriangles that have equal area, that each $e \in \cE(\cM)$ is cut into two equal parts, and that the jump of the normal derivative of $w \in V^c_{\cD}$ over a newly created edge, i.e., an edge interior to a $K_D$ for $D \in \cD$, is zero.
\end{proof}

Given $\cD \in  \mathbb{D}^c$ and $f \in F_\cD$, let $\cD =\cD_0 \leq \cD_1 \leq \cdots \subset \mathbb{D}^c$ be the sequence of $hp$-partitions produced by $\Reduce(\varrho,\cD,f_\cD)$.
We have established \eqref{14}, \eqref{17}, and \eqref{180}, for any fixed $\eps>0$, as well as Proposition~\ref{lem1}.
Observing that $\|p_{\bar{\cD}(\cM)}\|_\infty=\|p_{\cD}\|_\infty$,
an application of Proposition~\ref{prop12} now shows that in each iteration the quantity
$$
|u(f_\cD)-u_{\cD_i}(f_\cD)|_{H^1(\Omega)}^2+(1-\sqrt{\bar{\gamma}})r_{\cD,\eps} \est_{\cD_i}(u_{\cD_i},f_\cD),
$$
where $\bar{\gamma}=(1-\theta)+\theta/2$,
is reduced by at least a factor 
$1-\frac{(1-\sqrt{\bar{\gamma}})^2}{2}\frac{r_{\cD,\eps}}{R}$, and that this quantity is equivalent to $|u(f_\cD)-u_{\cD_i}(f_\cD)|_{H^1(\Omega)}^2$. In view of $r_{\cD,\eps} \eqsim \|p_\cD\|_{\infty}^{-2-2\eps}$, we conclude that in order
to reduce the initial error $|u(f_\cD)-u_{\cD}(f_\cD)|_{H^1(\Omega)}$ by a factor $\varrho$ by an application of \Reduce, the number of iterations that are required is
$$
M \eqsim \log(1/\varrho) \|p_\cD\|_{\infty}^{2+2\eps}.
$$

\begin{remark} 
This result is not satisfactory because the number of iterations grows more than quadratically with the maximal polynomial degree. Yet, it improves upon the result stated in \cite{BaPaSa:14}, where the number of iterations scales with the fifth power of the maximal polynomial degree.
\end{remark}

\medskip 
\noindent {\bf Acknowledgement}\\   
\noindent The first and the fourth authors are partially supported by GNCS-INdAM and the Italian research grant  {\sl Prin 2012}  2012HBLYE4  ``Metodologie innovative nella modellistica differenziale numerica''. The second author is partially supported by NSF grants DMS-1109325 and DMS-1411808.



\def\cprime{$'$}

 \end{document}